\newcommand{\black}{\color[rgb]{0,0,0}}
\newcommand{\eps}{\varepsilon}
\newcommand{\scp}[2]{\langle #1, #2 \rangle}
\newcommand{\cF}{\mathcal F}
\newcommand{\cU}{\mathcal U}
\newcommand{\cost}{\operatorname{cost}}
\newcommand{\sfrac}[2]{\mbox{$\frac{#1}{#2}$}}
\newcommand{\sig}{\sigma}%{\widetilde \sigma}
\newcommand{\vel}{v}%{\widetilde v}
\renewcommand{\P}{P}
\renewcommand{\Pr}{{\mathbb P}}
\newcommand{\N}{{\mathbb N}}
\newcommand{\E}{{\mathbb E}}
\newcommand{\R}{{\mathbb R}}
\newcommand{\proj}{\operatorname{proj}}
\newcommand{\pr}{\operatorname{pr}}
\newcommand{\assCa}{\hyperref[C1]{C.1}}
\newcommand{\assCb}{\hyperref[C2]{C.2}}
\newcommand{\assAa}{\hyperref[A1]{A.1}}
\newcommand{\assBa}{\hyperref[B1]{B.1}}
\theoremstyle{plain}
\newtheorem{theorem}{Theorem}[section]
\newtheorem{prop}[theorem]{Proposition}
\newtheorem{lemma}[theorem]{Lemma}
\newtheorem{cor}[theorem]{Corollary}
\newtheorem{ext}[theorem]{Extension}
\theoremstyle{definition}
\newtheorem{rem}[theorem]{Remark}
\begin{document}

\title[Multilevel stochastic approximation]%
{General multilevel adaptations for stochastic approximation algorithms}

\author[]%[Dereich]
{Steffen Dereich}
\address{Steffen Dereich\\
Institut f\"ur Mathematische Statistik\\
Fachbereich 10: Mathematik und Informatik\\
Westf\"alische Wilhelms-Universit\"at M\"unster\\
Orl\'eans-Ring 10\\
48149 M\"unster\\
Germany}
\email{steffen.dereich@wwu.de}

\author[]%[M\"uller-Gronbach]
{Thomas M\"uller-Gronbach}
\address{Thomas M\"uller-Gronbach\\
Fakult\"at f\"ur Informatik und Mathematik\\
Universit\"at Passau\\
Innstra{\ss}e 33\\
94032 Passau\\
Germany}
\email{thomas.mueller-gronbach@uni-passau.de}

\keywords{Stochastic approximation; Monte Carlo; multilevel}
\subjclass{Primary 62L20; Secondary 60J10, 65C05}
%\date{}

\begin{abstract}
In this article we present and analyse new multilevel adaptations of stochastic approximation algorithms for the computation of a zero of a function $f\colon D \to \R^d$ defined on a convex domain  $D\subset \R^d$, which is given as a parameterised family of expectations. Our approach is universal in the sense that having multilevel implementations for a particular application at hand it is straightforward to implement the corresponding stochastic approximation algorithm. Moreover, previous research on multilevel Monte Carlo can be incorporated in a natural way. 
This is due to the fact that the analysis of the error and the computational cost of our method is  based on similar assumptions as used in Giles~\cite{Gil08} for the computation of a single expectation. Additionally, we essentially only require that $f$ satisfies a classical contraction property from stochastic approximation theory. Under these assumptions we establish error bounds in $p$-th mean for our multilevel Robbins-Monro and Polyak-Ruppert schemes that decay 
in the computational time as fast as the classical error bounds for multilevel Monte Carlo approximations of single expectations known from Giles~\cite{Gil08}.
\end{abstract}

\maketitle

\section{Introduction}\label{s1}

Let  $D\subset \R^d$  be closed and  convex  and  let $U$ be  a random variable on a  probability space $(\Omega,\cF,\Pr)$
with values in a set $\mathcal U$ equipped with some $\sigma$-field.
We study the problem of computing  zeros of functions $f\colon D\to\R^d$ 
of the form
\[
f(\theta)=\E[F(\theta,U)], %=\int F^\omega(\theta)\,\dd \P(\omega),
\]
where $F\colon D\times \mathcal U\to \R^d$ is a product measurable function such that all expectations  $\E[F(\theta,U)]$  are well-defined.  
In this article we focus on the case where the random variables  $F(\theta,U)$ cannot be  simulated directly so that one has to work with appropriate approximations  in numerical simulations.  For example, one may think of $U$ being a Brownian motion and of $F(\theta,U)$ being the payoff of an option, where $\theta$ is a parameter affecting the payoff and/or the dynamics of the price process. Alternatively, $F(\theta,U)$ might be the value of a PDE at certain positions with $U$ representing random coefficients and $\theta$ a parameter of the equation.

In previous years the multilevel paradigm introduced by Heinrich~\cite{Hei01} and Giles~\cite{Gil08}  has proved to be a very efficient tool in the numerical computation of expectations. By Frikha~\cite{Fri15} it has recently been shown 
%for the first time 
that the efficiency of the multilevel paradigm  prevails  when combined with  stochastic approximation algorithms. In the present paper we take a different
% and more straightforward 
approach than the one introduced by the latter author. Instead of employing a sequence of coupled Robbins-Monro algorithms to construct a multilevel estimate of a zero of $f$ we basically propose a single Robbins-Monro algorithm that uses in the $(n+1)$-th  step a multilevel estimate of $\E[F(\theta_n,U)]$ with a complexity that is adapted to the actual state $\theta_n$ of the system and increases in the number of steps. 

Our approach is universal in the sense that having multilevel implementations for a particular application at hand it is straightforward to implement the corresponding stochastic approximation algorithm. Moreover, previous research on multilevel Monte Carlo can be incorporated in a natural way. This is due to the fact that
the analysis of the error and the computational cost of our method is  based on similar assumptions on the biases, the $p$-th central moments and the simulation cost of the underlying approximations   of $F(\theta,U)$  as used in Giles~\cite{Gil08}, see Assumptions \assCa\ and \assCb\  in Section~\ref{sec3}.  Additionally,  we require that $f$ satisfies a classical contraction property from stochastic approximation theory:  there exist $L>0$ and a zero $\theta^*$ of $f$ such that for all $\theta\in D$, 
\[
\langle f(\theta),\theta-\theta^*\rangle\leq -L \|\theta-\theta^*\|^2,
\]
where $\langle \cdot,\cdot\rangle$ denotes an inner product on $\R^d$. Moreover, $f$ has to satisfy a linear growth condition relative to the zero $\theta^*$, see Assumption \assAa\ and Remark~\ref{r001} in Section~\ref{sec2}. Note that the contraction property  implies that the zero $\theta^*$ is unique. Theorem~\ref{thm_Gilesnew} asserts that under these assumptions the maximum $p$-th mean error $\sup_{k\ge n}\E[\|\theta_k-\theta^*\|^p]$ of our properly tuned multilevel Robbins-Monro scheme $(\theta_n)_{n\in\N}$  satisfies the same upper bounds in terms of the computational time needed to compute $\theta_n$ as the bounds obtained in Giles~\cite{Gil08} for the multilevel computation of a single expectation.

In general, the design of this algorithm requires knowledge on the constant $L$ in the contraction property of $f$. To bypass this problem without loss of efficiency one may 
work with a Polyak-Ruppert average of our algorithm. Theorem~\ref{thm_Gilesnew_2} states that under Assumptions \assCa\ and 
\assCb\ on the  approximations  of $F(\theta,U)$ and Assumption \assBa\ on $f$, which is slightly stronger than condition \assAa\,  a properly tuned multilevel Polyak-Ruppert average $(\bar \theta_n)_{n\in\N}$ achieves, for $q<p$, the same upper bounds in the relation of the $q$-th mean error $\E[\|\bar \theta_n-\theta^*\|^q]$  and the corresponding computational time as the previously introduced multilevel Robbins-Monro method.

We briefly outline the content of the paper. The multilevel algorithms and the respective complexity theorems are presented in Section~\ref{sec3} for the case where $D=\R^d$. General  closed convex  domains $D$ are covered in Section~\ref{sec4}. We add that Sections~\ref{sec3} and~\ref{sec4} are self-contained and a reader interested  in the multilevel schemes only, can immediately start reading in Section~\ref{sec3}.

The error analysis of the multilevel stochastic approximation algorithms is based on new estimates of the $p$-th mean error of Robbins-Monro and Polyak-Ruppert algorithms. These 
results are presented in Section~\ref{sec2}. As a technical tool we employ a 
modified Burkholder-Davis-Gundy inequality, which is established in the appendix and 
might be of interest in itself, see Theorem~\ref{thm_BDG}.

We add that formally all results of the following sections remain true when replacing $(\R^d,\langle\cdot,\cdot\rangle)$ by an arbitrary separable Hilbert space. However in that case the definition~\eqref{cost1} of the computational cost of a multilevel algorithm might not be appropriate in general.

 \section{New error estimates for stochastic approximation algorithms}\label{sec2}

Since the pioneering work of  Robbins and Monro~\cite{RM51} in 1951 a large body of research has been devoted  to the analysis of stochastic approximation algorithms  with a strong focus on pathwise and weak convergence properties. In particular, laws of iterated logarithm and central limit theorems have been established that allow to optimise the parameters of the schemes with respect to the almost sure and weak convergence rates and the
size of the limiting covariance. See e.g.~\cite{DiRe97, DMP08, GaKr74, KoTs04, KuYa93, LaiRob78, LBN94, LBN95, MP11, Pel98b, Pel98a, Pol90, Rup82, Rup91} for results and further references as well as the survey articles and monographs~\cite{BMP90, Duf96, KY03, Lai03, LPW92, Rup91}.
Less attention has been paid to an error control in $L_p$-norm for arbitrary orders $p \ge 2$. We provide such estimates for the Robbins-Monro approximation and the Polyak-Rupert averaging introduced by Ruppert~\cite{Rup91} and Polyak~\cite{Pol90} under mild conditions on the ingredients of these schemes. These estimates build the basis for the error analysis of the multilevel schemes introduced in Section~\ref{sec3}.

Throughout this section we fix $p\in [2,\infty)$, a probability space $(\Omega,\cF,\P)$ equipped with a filtration $(\cF_n)_{n\in\N_0}$,
%  a convex closed domain $\cD\subset \R^d$, 
a scalar product $\langle\cdot,\cdot\rangle$ on $\R^d$ with induced norm $\|\cdot\|$.  Furthermore, we fix a measurable function $f\colon \R^d\to \R^d$ that has a unique zero
$\theta^*\in \R^d$. 

We consider an adapted $\R^d$-valued dynamical system $(\theta_n)_{n\in\N_0}$ iteratively defined by
\begin{equation}\label{dynsys2}
\theta_{n}=\theta_{n-1}+\gamma_n \bigl( f(\theta_{n-1})+ \eps_n\, R_n +\sig_n\, D_{n}\bigr),%\qquad \text{\st{$\text{macro: }\sig_n = \sigma_n/\gamma_n$}}
\end{equation}
for $n\in\N$, where $\theta_0\in \R^d$ is a fixed deterministic starting value, 
\begin{enumerate}
\item[(I)]  $(R_n)_{n\in\N}$ is a previsible process, the \emph{remainder/bias},
\item[(II)] $(D_n)_{n\in\N}$ is a sequence of martingale \emph{differences}, 
\item[(III)] $(\gamma_n)_{n\in\N}$ is a sequence of positive reals tending to zero, and $(\eps_n)_{n\in\N}$ and $(\sig_n)_{n\in\N}$ are sequences of non-negative real numbers.
\end{enumerate}

\subsection{{\bf Estimates for the Robbins-Monro algorithm}}
Our goal is to quantify the speed of convergence of the sequence 
$(\theta_n)_{n\in\N_0}$ to $\theta^*$ in the $p$-th mean sense in terms of 
the \emph{step-sizes} $\gamma_n$, the \emph{bias-levels} $\eps_n$ and the \emph{noise-levels} $\sig_n$.

To this end we employ the following set of assumptions in addition to (I)--(III).
\begin{enumerate}
\item[{\bf A.1}] (Assumptions on  $f$ and $\theta^*$) \label{A1}% \framebox{\st{$ \mu \to 1/L'$}} 
\\[.1cm]
There exist  $L, L'\in (0,\infty)$   such that for all $\theta\in\R^d$
\\[-.2cm]
\begin{itemize}
\item[(i)]  $\scp{\theta-\theta^*}{f(\theta)}\le -L\,\|\theta-\theta^*\|^2$ and\\[-.2cm]
\item[(ii)]  $\scp{\theta-\theta^*}{f(\theta)} \leq -L' \,\|f(\theta)\|^2$.\\[-.1cm]
\end{itemize}
\item[{\bf A.2}]  (Assumptions on $(R_n)_{n\in\N}$ and $(D_n)_{n\in\N}$) \\[.1cm] 
It holds  \\[-.2cm]
\begin{itemize}
\item[(i)]  $\displaystyle{ \sup_{n\in\N}\,\mathrm{esssup} \,  \|R_{n}\| <\infty}$ and\\[-.2cm]
\item[(ii)] $\displaystyle{ \sup_{n\in\N} \E[\|D_{n}\|^p]<\infty}$.\\[-.1cm]
\end{itemize}
\end{enumerate}

\begin{rem}[Discussion of Assumption A.1]\label{r001}
We briefly discuss  A.1(i) and A.1(ii).

Let $\theta\in  \R^d$ and  $c_1,c_2,c_2',\gamma\in (0,\infty)$, and consider the conditions
\begin{align*}
 \langle \theta-\theta^*,f(\theta)\rangle  & \le - c_1\,\|\theta-\theta^*\|^2,\tag{i}\\
 \langle \theta-\theta^*,f(\theta)\rangle & \le - c_2\,\|f(\theta)\|^2,\tag{ii}\\
 \|f(\theta)\| & \le c_2'\, \|\theta-\theta^*\|, \tag{ii'}\\
 \|\theta-\theta^* + \gamma f(\theta)\|^2 & \le \|\theta-\theta^*\|^2 \bigl(1-\gamma c_1 (2-\tfrac{\gamma}{c_2})\bigr). \tag{$\ast$}
\end{align*}
By the Cauchy-Schwartz inequality we have
\begin{equation}\label{aaa0}
f\text{ satisfies (ii)} \, \Rightarrow \, f\text{ satisfies (ii') for every } c_2' \ge 1/c_2,
\end{equation}
and the choice $f(\theta) = \theta$ shows that the 
reverse implication is not valid in general. However, it is easy to check that
\[
f\text{ satisfies (i) and (ii')}\, \Rightarrow \, f\text{ satisfies (ii) for any }c_2\le c_1/(c_2')^2.
\]
Thus, in the presence of condition A.1(i), condition  A.1(ii) is equivalent to a linear growth condition on the function $f$ relative to the zero $\theta^*$.

Finally, conditions (i) and (ii) jointly imply the contraction property (*), which is crucial for the analysis of the Robbins-Monro scheme. We have
\begin{equation}\label{aaa1}
f\text{ satisfies (i) and (ii)}\, \Rightarrow \,  \text{$f$ satisfies ($\ast$) for every  }\gamma \le 2c_2. 
\end{equation}
In fact, let $\gamma \le 2c_2$ and use (ii) and then (i) to conclude that
\begin{align*}
\|\theta-\theta^* + \gamma f(\theta)\|^2 & = \|\theta-\theta^*\|^2 + 2\gamma \langle \theta-\theta^*, f(\theta)\rangle + \gamma^2 \|f(\theta)\|^2 \\ & \le \|\theta-\theta^*\|^2 + \langle \theta-\theta^*, f(\theta)\rangle (2\gamma -\tfrac{\gamma^2}{c_2}) \le \|\theta-\theta^*\|^2 -c_1 \|\theta-\theta^*\|^2 (2\gamma -\tfrac{\gamma^2}{c_2}).
\end{align*}
\end{rem}

In the following we put for $r\in (0,\infty)$ and $n,k\in\N$ with $n \ge k$,
\begin{equation}\label{cen}
\tau_{k,n}(r) = \prod_{j=k+1}^n (1-\gamma_j r),\quad e_{k,n} (r)= \max_{j=k,\dots,n} \eps_j\, \tau_{j,n}(r),\quad s^2_{k,n} (r)= \sum_{j=k}^n \gamma_j^2\,\sig_j^2\, (\tau_{j,n}(r))^2.
\end{equation}

First we provide $p$-th mean error estimates in terms of the quantities  introduced in~\eqref{cen}.

\begin{prop}\label{prop0}
Assume that (I)-(III) and A.1 and A.2 are satisfied. Then for every $r\in (0,L)$ there 
exist $n_0\in\N$ and $\kappa \in (0,\infty)$  such that for all $n\ge k_0 \ge n_0$ we have $\tau_{k_0,n}(r) \in (0,1)$ and  
\begin{equation}\label{esti}
 \E\bigl[\|\theta_n-\theta^*\|^p\bigr]^{1/p}\leq \kappa\, \bigl(\tau_{k_0,n}(r)\,\E[\|\theta_{k_0}-\theta^*\|^p]^{1/p} + e_{k_0,n}(r) + s_{k_0,n}(r)\bigr).
\end{equation}
\end{prop}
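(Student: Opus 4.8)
The plan is to reduce everything to the one-step contraction property $(\ast)$ from Remark~\ref{r001} and then to propagate it through the recursion \eqref{dynsys2}, treating the bias and the martingale noise by different means. Write $X_n=\theta_n-\theta^*$ and split one step as $X_n=W_{n-1}+\gamma_n\sig_n D_n$, where $W_{n-1}=X_{n-1}+\gamma_n f(\theta_{n-1})+\gamma_n\eps_n R_n$ is $\cF_{n-1}$-measurable (using that $R_n$ is previsible) and $D_n$ is the centred increment. I would fix an auxiliary rate $r'\in(r,L)$; this is possible precisely because $r<L$, and it is the crucial freedom in the argument. By A.1(i),(ii) and implication \eqref{aaa1}, property $(\ast)$ holds with $c_1=L$ and $c_2=L'$, so $\|X_{n-1}+\gamma_n f(\theta_{n-1})\|^2\le\|X_{n-1}\|^2\bigl(1-\gamma_n L(2-\gamma_n/L')\bigr)$; taking square roots and using $\sqrt{1-2a}\le1-a$ shows $\|X_{n-1}+\gamma_n f(\theta_{n-1})\|\le(1-\gamma_n r')\|X_{n-1}\|$ once $\gamma_n$ is small enough, i.e. for all $n\ge n_0$, where $n_0$ is also taken large enough that each factor $1-\gamma_j r$, $1-\gamma_j r'$ and $1-\gamma_j(r'-r)$ is positive. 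Together with $\|R_n\|\le C_R:=\sup_n\mathrm{esssup}\,\|R_n\|<\infty$ from A.2(i), this yields the predictable bound $\|W_{n-1}\|\le(1-\gamma_n r')\|X_{n-1}\|+\gamma_n\eps_n C_R$.

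Next I would feed these one-step estimates into the modified Burkholder--Davis--Gundy inequality, Theorem~\ref{thm_BDG}, which is tailored to contractive recursions of the form \eqref{dynsys2}. It converts the recursion $X_n=W_{n-1}+\gamma_n\sig_n D_n$ together with the predictable bound above into an $L_p$-estimate consisting of a contracted initial term $\tau_{k_0,n}(r')\,\E[\|X_{k_0}\|^p]^{1/p}$, an $\ell^1$-type bias sum $C_R\sum_{j=k_0+1}^n\gamma_j\,\tau_{j,n}(r')\,\eps_j$, and---crucially exploiting $\E[D_j\mid\cF_{j-1}]=0$---only an $\ell^2$-type noise term $C_D\,s_{k_0,n}(r')$, where $C_D:=\bigl(\sup_n\E[\|D_n\|^p]\bigr)^{1/p}<\infty$ by A.2(ii). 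This already has the shape of \eqref{esti}, but with $r'$ in place of $r$ throughout.

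It then remains to replace $r'$ by $r$. For the initial and the noise terms this is immediate and in the favourable direction: since $r'>r$ we have $\tau_{j,n}(r')\le\tau_{j,n}(r)$, hence $\tau_{k_0,n}(r')\le\tau_{k_0,n}(r)$ and $s_{k_0,n}(r')\le s_{k_0,n}(r)$. The only delicate term is the bias sum, and this is where the strict inequality $r'>r$ pays off. Writing $\gamma_j\tau_{j,n}(r')\eps_j=\gamma_j\frac{\tau_{j,n}(r')}{\tau_{j,n}(r)}\bigl(\eps_j\tau_{j,n}(r)\bigr)\le e_{k_0,n}(r)\,\gamma_j\,\tau_{j,n}(r'-r)$, where the last step uses the factorwise bound $\frac{1-\gamma_i r'}{1-\gamma_i r}\le1-\gamma_i(r'-r)$ and the definition of $e_{k_0,n}(r)$ as the relevant maximum, one reduces to the telescoping identity $\gamma_j(r'-r)\tau_{j,n}(r'-r)=\tau_{j,n}(r'-r)-\tau_{j-1,n}(r'-r)$. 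Summing collapses this to $\sum_{j=k_0+1}^n\gamma_j\tau_{j,n}(r'-r)\le(r'-r)^{-1}$, so the whole bias sum is bounded by $(r'-r)^{-1}e_{k_0,n}(r)$. Absorbing $C_R$, $C_D$, the BDG constant and $(r'-r)^{-1}$ into a single $\kappa$ then gives \eqref{esti}.

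The main obstacle I anticipate is the interaction between the nonlinearity of $f$ and the martingale noise. Since $f$ enters $(\ast)$ only through the one-step contraction of $\|X_{n-1}\|$, there is no exact linear representation of $X_n$ as a contracted initial value plus a martingale transform, so a naive Minkowski treatment of the noise would replace the correct $\ell^2$-quantity $s_{k_0,n}$ by a far larger $\ell^1$-sum $\sum_j\gamma_j\sig_j\tau_{j,n}$ and destroy the rate. Obtaining the sharp sum-of-squares noise term while carrying the nonlinear contraction along is exactly what Theorem~\ref{thm_BDG} is for. The secondary subtlety---that the bias sum is summable only because of the extra geometric decay furnished by the gap $r'-r>0$---is what the telescoping step above resolves, and it is the reason the hypothesis $r<L$ (rather than $r\le L$) is imposed.
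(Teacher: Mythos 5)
Your outer architecture matches the paper's (predictable/martingale splitting of one step, contraction at an auxiliary rate strictly inside $(r,L)$, a BDG-type maximal inequality to keep the noise in $\ell^2$ form), but there is a genuine gap at the central step: Theorem~\ref{thm_BDG} does not perform the conversion you ask of it. Its hypothesis is the specific \emph{maximum}-form bound $\|\xi_n\|\le\|\zeta_{n-1}\|\vee c$ for a single constant $c$, and its conclusion is the unweighted estimate $\E[\max_{0\le k\le n}\|\zeta_k\|^p]\le\kappa\,(\E[[M]_n^{p/2}]+c^p)$. It does not accept your contraction-plus-additive-bias hypothesis $\|W_{n-1}\|\le(1-\gamma_n r')\|X_{n-1}\|+\gamma_n\eps_n C_R$, and it does not output a discounted three-term estimate with $\tau$-factors, an $\ell^1$-type bias sum and an $\ell^2$-type noise term. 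That intermediate estimate --- the input to your telescoping computation --- is essentially the content of the proposition itself, and your proposal never establishes it. The missing device is the paper's rescaling: one sets $\zeta_n=(\theta_n-\theta^*)/\tau_{k_0,n}(r)$ with $\xi_n$ and $M_n$ as in \eqref{pro1}, so that $\zeta_n=\xi_n+\Delta M_n$; the gap between the contraction rate $(r+L)/2$ and the discount rate $r$ then yields $\|\xi_n\|\le\bigl(1-\gamma_n\tfrac{L-r}{2}\bigr)\|\zeta_{n-1}\|+\kappa_1\gamma_n\eps_n/\tau_{k_0,n}$, and a case distinction (according to whether $\tfrac{L-r}{2}\|\zeta_{n-1}\|\ge\kappa_1\eps_n/\tau_{k_0,n}$ or not) upgrades this to the required max form $\|\xi_n\|\le\|\zeta_{n-1}\|\vee\tfrac{2\kappa_1}{L-r}\,\eps_n/\tau_{k_0,n}$, i.e.\ \eqref{g2}, with the uniform constant $c=\kappa_2\,e_{k_0,n}(r)/\tau_{k_0,n}(r)$ because $\max_{k_0\le j\le n}\eps_j/\tau_{k_0,j}=e_{k_0,n}/\tau_{k_0,n}$. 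This is where the strict gap between the two rates (your $r'-r>0$) is actually spent; you deploy it instead in post-processing a bias sum that the quoted theorem never produces.

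Once the rescaling and the max-reduction are inserted, your plan closes and in fact simplifies: the bias enters through the constant $c^p$ in Theorem~\ref{thm_BDG} and gives the $e_{k_0,n}(r)$ term directly, so your $\ell^1$/telescoping step --- which is arithmetically correct, since $\tau_{j,n}(r')/\tau_{j,n}(r)\le\tau_{j,n}(r'-r)$ factorwise and $\sum_{j=k_0+1}^n\gamma_j\tau_{j,n}(r'-r)\le(r'-r)^{-1}$ by the telescoping identity you state --- becomes unnecessary. The noise term comes from the $L^{p/2}$-triangle inequality applied to $[M]_n$, giving $\E[[M]_n^{p/2}]^{2/p}\le\E[\|\theta_{k_0}-\theta^*\|^p]^{2/p}+\kappa_1^{2/p}s^2_{k_0,n}/\tau^2_{k_0,n}$ as in \eqref{abc6}, and multiplying the resulting maximal bound for $\zeta$ by $\tau^p_{k_0,n}$ yields \eqref{esti}. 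Two smaller omissions you should also repair: verify $\theta_n\in L_p(\Omega,\cF,\P)$ (the paper does this via the linear growth implied by \eqref{aaa0} together with A.2) so that $(M_n)$ is a genuine $L_p$-martingale, and record that $n_0$ is chosen with $1-\gamma_n L>0$ for $n\ge n_0$, which is what gives the claimed $\tau_{k_0,n}(r)\in(0,1)$.
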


\begin{proof}
Without loss of generality we may assume that $\theta^*=0$.

By Assumption A.2 there exists $\kappa_1 \in(0,\infty)$  such that for all $n\in\N$, 
\begin{equation}\label{g1}
\|R_{n}\| \leq \kappa_1 \text{ \ a.s.}
\end{equation}
and
\begin{equation}\label{gg1}
\E[\|D_n\|^p ] \le \kappa_1.
\end{equation}
Note further that  \eqref{aaa0} in Remark~\ref{r001}   implies that the dynamical system \eqref{dynsys2} satisfies $\|\theta_n\|\le (1+\gamma_n/L')\|\theta_{n-1}\| + \gamma_n\eps_n\|R_n\| + \gamma_n\sig_n\|D_n\|$ for every $n\in\N$. With Assumption A.2 we conclude that $\theta_n\in L_p(\Omega,\mathcal F,\P)$ for every $n\in\N$.

Let $r\in (0,L)$. Since $\lim_{n\to\infty} \gamma_n = 0$ we may choose $n_0\in \N$ such that $1-\gamma_n L > 0$ and $1-\sfrac12 \gamma_n/L' \ge (r+L)/(2L) $ for all $n\ge n_0$. Using \eqref{aaa1} in Remark~\ref{r001} %\st{A.1} 
we obtain that for all $\theta\in \R^d$ and for all $n\ge n_0$, 
\begin{equation}\label{e334}
 \begin{aligned}
\|\theta+\gamma_n f(\theta)\|^2
\leq (1 -2\gamma_n L ( 1-\sfrac12 \gamma_n/L')  )\|\theta\|^2 \leq(1- \gamma_n (r+L)/2)^2\, \|\theta\|^2.
\end{aligned}
\end{equation}

In the following we write $\tau_{k,n}$, $e_{k,n}$ and $s_{k,n}$ in place of $\tau_{k,n}(r)$, $e_{k,n}(r)$ and $s_{k,n}(r)$, respectively. Let $k_0 \ge n_0$ and put
\begin{equation}\label{pro1}
\zeta_n= \frac{\theta_n}{\tau_{k_0,n}},\quad 
\xi_n= \frac{\theta_{n-1}+\gamma_n\, (f(\theta_{n-1}) + \eps_n\, R_{n})}{\tau_{k_0,n}},\quad  M_n= \zeta_{k_0}+ \sum_{ k=k_0+1}^n \frac{\gamma_k\,\sig_{k}\, D_{k}}{\tau_{k_0,k}}
\end{equation}
for $n\ge k_0$. Then  $(\zeta_n)_{n\geq  k_0}$ is adapted, $(\xi_n)_{n>  k_0}$ is previsible,  $(M_n)_{n\ge k_0}$ is a martingale and for all  $n>k_0$ we have
\begin{equation}\label{e222}
\zeta_{n}=\xi_n +\Delta M_{n}.
\end{equation}

Below we show that there exists a constant $\kappa_2\in (0,\infty)$, which only depends on $L$, $r$ and $\kappa_1$
such that  a.s.\ for all $n>k_0$, 
\begin{equation}\label{g2}
\|\xi_n\|  \le \|\zeta_{n-1}\| \vee \kappa_2 \,\frac{\eps_n}{\tau_{k_0,n}}
\end{equation}
and
\begin{equation}\label{g3}
 \E\bigl[ [M]_{n}^{p/2}\bigr]^{2/p} \leq \E[\|\theta_{k_0}\|^p]^{2/p} + \kappa_2\,\frac{s^2_{k_0,n} }{\tau^2_{k_0,n}}.
\end{equation}

Observing \eqref{e222} and \eqref{g2}  we may  apply the BDG inequality, see Theorem \ref{thm_BDG}, to the  processes $(\zeta_{n})_{n\ge k_0}$, $(\xi_{n})_{n > k_0}$ and $(M_{n})_{n \ge k_0}$ to obtain for $n\ge k_0$ that 
\begin{align}\label{eq_zeta_est}
\E\bigl[\max_{k_0\le k \le n}\|\zeta_{k}\|^p\bigr]\leq \kappa_3 \,\Bigl( \E\bigl[ [M]_{n}^{p/2}\bigr] +  \bigl(\kappa_2 \frac{e_{k_0,n}}{\tau_{k_0,n}}\bigr)^p\Bigr),
\end{align}
where the constant $\kappa_3>0$ only depends on  $p$. Using \eqref{g3} we  conclude that 
\[
\E\bigl[\|\theta_n\|^p\bigr]
=  \tau_{k_0,n}^{p}\,\E\bigl[\|\zeta_{n}\|^p\bigr]   \le 2^{p/2}\kappa_3\,\bigl(\tau_{k_0,n}^p\,\E[\|\theta_{k_0}\|^p] + \kappa_2^{p/2}s_{k_0,n}^p  +\kappa_2^p\, e_{k_0,n}^{p}\bigr),
\]
which completes the proof of the theorem up to the justification of \eqref{g2} and \eqref{g3}.

For the proof of \eqref{g2} we use \eqref{g1} and \eqref{e334} to obtain that  a.s.\ for $n>k_0$,
\begin{align*}
\|\xi_{n}\| & \le \Bigl\|\frac{\theta_{n-1}+\gamma_n f(\theta_{n-1})}{1-\gamma_n \, r} \frac 1{\tau_{k_0,n-1}}\Bigr\| + \frac{\gamma_n\, \eps_n }{\tau_{k_0,n}}\|R_n\| \\ & \le \frac{1-\gamma_n (r+L)/2}{1-\gamma_n \, r} \|\zeta_{n-1}\| + \kappa_1\frac{\gamma_n \,\eps_n}{\tau_{k_0,n}}
 \le \Bigl(1-\gamma_n\frac{L-r}{2}\Bigr) \|\zeta_{n-1}\| + \kappa_1\frac{\gamma_n\,\eps_n }{\tau_{k_0,n}}, 
\end{align*}
where  the last inequality follows from the fact that $\frac{1-a}{1-b}\leq 1-a+b$ for $0\leq b\leq a\leq1$. 
Hence,  if $ \frac{L-r}{2} \|\zeta_{n-1}\| \ge \kappa_1 \eps_n/\tau_{k_0,n} $ then 
\[
\|\xi_{n}\| \le \|\zeta_{n-1}\|,
\]
 while in the case $ \frac{L-r}{2} \|\zeta_{n-1}\| < \kappa_1\eps_n /\tau_{k_0,n} $, 
\[
\|\xi_{n}\| \le \frac{2\kappa_1}{L-r}\,\frac{ \eps_n}{\tau_{k_0,n}}.
\]
Thus \eqref{g2} holds for any $\kappa_2 \ge 2\kappa_1/(L-r)$.

It remains to show \eqref{g3}. Using \eqref{gg1} we get
\begin{equation}\label{abc6}
\begin{aligned}
\E\bigl[[M]_{n}^{p/2}\bigr]^{2/p} &  =\E\Bigl[\bigl(\|\theta_{k_0}\|^2+ \sum_{k=k_0+1} ^{n} \|\Delta M_k\|^2\bigr)^{p/2}\Bigr]^{2/p}\\
& \leq \E[\|\theta_{k_0}\|^p]^{2/p} +\sum_{k=k_0+1}^{n} \frac{\gamma_k^2\sig_k^2}{\tau_{k_0,k}^2}\bigl(\E[  \|D_k\|^p]\bigr)^{2/p} \\
& \leq \E[\|\theta_{k_0}\|^p]^{2/p} + \kappa_1^{2/p}\sum_{k=k_0+1} ^{n} \frac{\gamma_k^2\,\sig_k^2}{\tau_{k_0,k}^2}  
=  \E[\|\theta_{k_0}\|^p]^{2/p} + \frac{\kappa_1^{2/p}}{\tau_{k_0,n}^2}s_{k_0,n}^2.
\end{aligned}
\end{equation}
Hence \eqref{g3} holds for any $\kappa_2\ge  \kappa_1^{2/p}$,  which completes the proof.
\end{proof}

\begin{rem}  The proof of the $p$-th mean error estimate~\eqref{esti}   in Proposition~\ref{prop0} for the times $n\geq k_0\geq n_0$  makes use of the recursion~(\ref{dynsys2}) for $n$ strictly larger than $k_0$ only. Hence, if $m_0\in\N_0$ and $(\tilde \theta_n)_{n\geq m_0}$ is the dynamical system given by the recursion (\ref{dynsys2}) with an arbitrary random starting value $\tilde \theta_{m_0}\in L_p(\Omega,\mathcal F_{m_0},P)$ then  estimate~\eqref{esti} is  valid  for $\tilde \theta_n$ in place of $\theta_n$ with the same constant $\kappa$ for all $n\ge k_0 \ge \max(n_0,m_0)$.
\end{rem}

The following theorem  provides an estimate for
 the $p$-th mean error of $\theta_n$ in terms of the product
\[
\vel_n=\sqrt{\gamma_n} \,\sig_n.% \qquad \framebox{\st{macro $\vel_n = 1/\sqrt{v_n}$}}
\]
It requires the following additional assumptions  on the step-sizes 
 $\gamma_n$, the bias-levels $\eps_n$ and the noise-levels $\sigma_n$.

\begin{itemize}
\item[{\bf A.3}] (Assumptions on $(\gamma_n)_{n\in\N}$, $(\eps_n)_{n\in\N}$ and $(\sig_n)_{n\in\N}$)\\[.1cm]
We have $\vel_n > 0 $ for all $n\in\N$. 
Furthermore,  with $L$ according to A.1(i), \\[-.1cm]
\begin{itemize}
\item[(i)] $\displaystyle{\limsup_{n\to\infty} \frac{\eps_n}{\vel_n} < \infty}$, and  \\[.1cm]	
\item[(ii)] $\displaystyle{\limsup_{n\to\infty} \frac{1}{\gamma_n}\,\frac{\vel_{n-1} - \vel_{n}}{\vel_{n-1}} <L}$. 
\end{itemize} 
\end{itemize} 

%\framebox{\st{add: convergence results for multilevel are based on the following theorem}} 

\begin{theorem}[Robbins-Monro approximation]\label{thm1} 
Assume that  conditions (I)-(III), A.1, A.2  and A.3 are satisfied. 
Then there exists  $\kappa \in (0,\infty)$  such that for all $n\in \N$,
\[
 \E\bigl[\|\theta_n-\theta^*\|^p\bigr]^{1/p}\leq \kappa\, \vel_n.
\]
\end{theorem}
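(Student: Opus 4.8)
The plan is to derive Theorem~\ref{thm1} from Proposition~\ref{prop0} by choosing the free parameter $r\in(0,L)$ appropriately and then showing that each of the three terms on the right-hand side of~\eqref{esti} is bounded by a constant multiple of $\vel_n$. First I would fix some $r\in(0,L)$ strictly between the $\limsup$ appearing in A.3(ii) and $L$; this is possible precisely because A.3(ii) guarantees $\limsup_n \gamma_n^{-1}(\vel_{n-1}-\vel_n)/\vel_{n-1}<L$. With this $r$ I invoke Proposition~\ref{prop0} to obtain $n_0$ and $\kappa$ so that~\eqref{esti} holds for all $n\ge k_0\ge n_0$. The goal is then to bound the factors $\tau_{k_0,n}(r)$, $e_{k_0,n}(r)$ and $s_{k_0,n}(r)$ relative to $\vel_n$, uniformly in $n$, and finally to absorb the finitely many initial indices $n<n_0$ into the constant (each such $\theta_n$ lies in $L_p$ with $\vel_n>0$ by A.3).

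The analytical heart is to compare $\vel_n$ with the weights $\tau_{j,n}(r)=\prod_{i=j+1}^n(1-\gamma_i r)$. The key observation should be that the choice of $r$ makes $\vel_n$ decay \emph{slower} than $\tau_{k_0,n}(r)$: writing the ratio $\vel_{j}/\vel_{n}$ as a telescoping product of factors $\vel_{i-1}/\vel_i=(1-(\vel_{i-1}-\vel_i)/\vel_{i-1})^{-1}$, condition A.3(ii) gives $(\vel_{i-1}-\vel_i)/\vel_{i-1}\le r\gamma_i$ for large $i$, so that $\vel_{i}\ge \vel_{i-1}(1-r\gamma_i)$ and hence $\vel_{j}\,\tau_{j,n}(r)\le C\,\vel_n$ for all $k_0\le j\le n$. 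Taking $j=k_0$ bounds the first term, since $\E[\|\theta_{k_0}-\theta^*\|^p]^{1/p}$ is a fixed constant. For the bias term, A.3(i) gives $\eps_j\le C\vel_j$ for large $j$, whence $e_{k_0,n}(r)=\max_j \eps_j\tau_{j,n}(r)\le C\max_j \vel_j\tau_{j,n}(r)\le C'\vel_n$ by the same product estimate.

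The noise term $s_{k_0,n}(r)^2=\sum_{j=k_0}^n \gamma_j^2\sig_j^2(\tau_{j,n}(r))^2$ requires a standard summation argument rather than a pointwise bound. Writing $\gamma_j\sig_j^2=\vel_j^2/\gamma_j\cdot\gamma_j^2/\gamma_j$—more usefully, $\gamma_j^2\sig_j^2=\gamma_j\,\vel_j^2$—I would bound $\vel_j^2(\tau_{j,n}(r))^2\le C\,\vel_n^2\,\tau_{j,n}(r)$ using the product estimate (one factor of the comparison, the other of $\tau_{j,n}\le1$), giving $s_{k_0,n}(r)^2\le C\vel_n^2\sum_{j=k_0}^n \gamma_j r\,\tau_{j,n}(r)/r$. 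The inner sum telescopes: since $\gamma_j r\,\tau_{j,n}(r)=\tau_{j,n}(r)-\tau_{j-1,n}(r)$, one gets $\sum_{j=k_0}^n\gamma_j r\,\tau_{j,n}(r)\le 1$, so $s_{k_0,n}(r)\le C'\vel_n$. Combining the three bounds in~\eqref{esti} yields $\E[\|\theta_n-\theta^*\|^p]^{1/p}\le\kappa\,\vel_n$ for $n\ge n_0$, and the initial segment is handled by enlarging $\kappa$.

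I expect the main obstacle to be the telescoping comparison $\vel_j\,\tau_{j,n}(r)\le C\vel_n$, which is where A.3(ii) and the specific choice of $r$ enter decisively; all the subsequent estimates reduce to this one lemma together with the elementary summation-by-parts identity for $\sum\gamma_j r\,\tau_{j,n}(r)$. Care is needed with the finitely many small indices where the asymptotic inequalities from A.3 may fail, but since every $\vel_n$ is strictly positive these contribute only a bounded correction to the constant.
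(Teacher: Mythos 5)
Your skeleton is exactly the paper's: invoke Proposition~\ref{prop0} with an $r\in(0,L)$ dominating the $\limsup$ in A.3(ii), bound $\tau_{k_0,n}(r)$, $e_{k_0,n}(r)$ and $s_{k_0,n}(r)$ by constant multiples of $\vel_n$, and absorb the finitely many initial indices into the constant. Your treatment of the first two terms is correct and is, in substance, the paper's argument (the paper phrases your telescoping comparison as monotonicity of the sequence $\vel_n/\tau_{n_1,n}(r)$). But the noise-term step contains a genuine gap: the pointwise bound $\vel_j^2(\tau_{j,n}(r))^2\le C\,\vel_n^2\,\tau_{j,n}(r)$ is equivalent to $\tau_{j,n}(r)\le C\,(\vel_n/\vel_j)^2$, i.e.\ it asks $\tau_{j,n}(r)$ to decay at least as fast as the \emph{square} of the ratio $\vel_n/\vel_j$, whereas your comparison lemma delivers only one power, $\tau_{j,n}(r)\le C\,\vel_n/\vel_j$, and only for $r$ above the decay rate of $(\vel_n)$. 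Since $r$ must also stay below $L$, the claimed inequality fails whenever that decay rate exceeds $L/2$, a regime the theorem covers. Concretely: $L=1$, $\gamma_n=1/n$, $\sig_n^2=n^{-0.8}$, so $\vel_n=n^{-0.9}$ and the $\limsup$ in A.3(ii) equals $0.9<L$; every admissible $r$ lies in $(0.9,1)$, but your inequality would require $r\ge 1.8$. Indeed, at fixed $j=k_0$ the summand $\gamma_j\vel_j^2(\tau_{j,n}(r))^2\asymp n^{-2r}$ while your majorant $\vel_n^2\,\gamma_j\,\tau_{j,n}(r)\asymp n^{-1.8-r}$ is of strictly smaller order. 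The conclusion $s_{k_0,n}(r)\le C\vel_n$ is nonetheless true --- the sum is dominated by indices $j$ near $n$ --- it is only your route to it that breaks.

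The repair needs the gap between $r$ and the actual decay rate, which your single choice of $r$ throws away. Pick $r_1\in(0,L)$ strictly above the $\limsup$ in A.3(ii), so that $\vel_n\ge \vel_{n-1}(1-\gamma_n r_1)$ for all large $n$, and then $r\in(r_1,L)$. The paper exploits the gap through a recursion: since $s^2_{n_1,n}(r)=(1-\gamma_n r)^2\,s^2_{n_1,n-1}(r)+\gamma_n\vel_n^2$, the quantity $\varphi(n)=s^2_{n_1,n}(r)/\vel_n^2$ satisfies $\varphi(n)\le (1-\gamma_n(r-r_1))\,\varphi(n-1)+\gamma_n$, whence $\varphi(n)\le \varphi(n_1)\vee 1/(r-r_1)$. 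Equivalently, in your pointwise-plus-telescope style: from $(1-\gamma r)/(1-\gamma r_1)\le 1-\gamma(r-r_1)$ one gets $\tau_{j,n}(r)\le \tau_{j,n}(r_1)\,\tau_{j,n}(r-r_1)$, hence $\vel_j^2(\tau_{j,n}(r))^2\le \vel_n^2\,\tau_{j,n}(r-r_1)$ for $j$ past the threshold, and your telescoping identity applied at rate $r-r_1$ yields $s^2_{k_0,n}(r)\le \vel_n^2/(r-r_1)$ up to a bounded correction. With that substitution your proof goes through and coincides with the paper's.
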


\begin{proof}
Below we show that there exist $r\in (0,L)$, $\kappa_1\in (0,\infty)$ and $n_1\in\N$ such that for all $n\ge n_1$ we have  $\gamma_n < 1/L$  and
\begin{equation}\label{only}
\tau_{n_1,n}(r) + e_{n_1,n}(r) + s_{n_1,n}(r) \le \kappa_1\, \vel_{n}.
\end{equation}
Then, by choosing $n_0\in \N$  and $\kappa\in (0,\infty)$ according to Proposition~\ref{prop0} and taking $k_0 = \max(n_0,n_1)$ we have for $n\ge k_0$ that $\tau_{k_0,n}(r) = \tau_{n_1,n}(r)/\tau_{n_1,k_0}(r)$, $e_{k_0,n}(r) \le  e_{n_1,n}(r)$ and
$s_{k_0,n}(r) \le  s_{n_1,n}(r)$, and therefore for all $n\geq k_0$
\begin{align*}
 \E\bigl[\|\theta_n-\theta^*\|^p\bigr]^{1/p}\leq \kappa\, 
\bigl(\E[\|\theta_{k_0}-\theta^*\|^p]^{1/p}/ \tau_{n_1,k_0}(r) + 1 \bigr)\, \bigl(\tau_{n_1,n}(r) + e_{n_1,n}(r) + s_{n_1,n}(r)\bigr),
\end{align*}  
which finishes the proof of the theorem, up to the justification of~\eqref{only}.

By Assumption A.3 there exist  $r_1\in (0,L)$, $\kappa_2\in (0,\infty)$ and $n_1\in\N$ such that for all $n>n_1$,
\begin{equation}\label{one}
\frac{\vel_{n-1}}{\vel_n} \le \frac{1}{1-\gamma_n r_1}
\end{equation}
as well as
\begin{equation}\label{one0}
\eps_n \le \kappa_2\, \vel_n.
\end{equation}
Take $r\in (r_1,L)$ and assume without loss of generality that $1-\gamma_n r > 0$ for all  $n\ge n_1$. In the following we write $\tau_{k,n}$, $e_{k,n}$ and $s_{k,n}$ in place of $\tau_{k,n}(r)$, $e_{k,n}(r)$ and $s_{k,n}(r)$, respectively.
It follows from \eqref{one} and $r > r_1$  that the sequence $(\vel_n/\tau_{n_1,n})_{n\ge n_1}$ is increasing and therefore, for all $n\ge n_1$,
\begin{equation}\label{two}
\tau_{n_1,n} = \frac{\tau_{n_1,n}}{\vel_{n_1}}\, \frac{\vel_{n_1}}{\tau_{n_1,n_1}} \le \frac{\vel_n}{\vel_{n_1}}.
\end{equation}
Furthermore, observing \eqref{one0} we also have for all $n\ge n_1$,
\begin{equation}\label{three}
e_{n_1,n} \le  \kappa_2\, \max_{j=n_1,\dots,n} \vel_j\, \tau_{j,n} = \kappa_2\,\tau_{n_1,n}\max_{j=n_1,\dots,n} \frac{\vel_j}{\tau_{n_1,j}} = \kappa_2\, \vel_n.
\end{equation}
Put 
\[
\varphi(n) = \frac{s_{n_1,n}^2}{\vel_n^2}
\]
for $n\ge n_1$. Observing \eqref{one} we obtain  that for $n>n_1$,
\begin{align*}
\varphi(n) & = \frac{\vel_{n-1}^2}{\vel_{n}^2} \, (1-\gamma_n r)^2 \, \varphi(n-1) + \gamma_n \le  \frac{(1-\gamma_n r)^2}{(1-\gamma_{n} r_1)^2}\, \varphi(n-1) + \gamma_n\\
& = \Bigl( 1- \gamma_n \frac{r-r_1}{1-\gamma_n r_1}\Bigr)^2 \, \varphi(n-1) + \gamma_n \le (1- \gamma_n (r-r_1)) \, \varphi(n-1) + \gamma_n.
\end{align*}
This entails that
\[
\varphi(n) - 1/(r-r_1) \le (1- \gamma_n(r-r_1)) (\varphi(n-1) - 1/(r-r_1)),
\]
so that  $\varphi(n) \le \varphi(n-1) \vee 1/(r-r_1)$. Hence, by induction, for all $n\ge n_1$,
\[
\varphi(n) \le  \varphi(n_1)\vee 1/(r-r_1),
\]
so that
\begin{equation}\label{four}
s_{n_1,n} \le (\gamma_{n_1}\vee 1/(r-r_1))^{1/2}\, \vel_n.
\end{equation}
Combining \eqref{two} to \eqref{four} yields \eqref{only}.
\end{proof}

As a particular consequence of Theorem~$\ref{thm1}$ we obtain error estimates in the
case of polynomial step-sizes $\gamma_n$ and noise-levels $\sigma_n$.

\begin{cor}[Polynomial step-sizes and noise-levels]\label{Cor1}
 Assume that conditions (I)-(III), A.1 and A.2 are satisfied and choose $L$ according to A.1(i).  Take $\gamma_0,\sigma_0\in (0,\infty)$, $r_1\in (0,1]$ and $r_2\in \R$ 
 with 
\[
r_1<1 \text{ \ or \ } \bigl(r_1=1\text{ \, and \, }  \gamma_0 > \frac{1+r_2}{2L}\bigr)
\]
and let for all $n\in\N$, 
\[
\gamma_n =  \gamma_0 \frac{1}{n^{r_1}},\quad \sig_n^2 =  \sigma_0^2 \frac{1}{n^{r_2}}.
\]
 Assume further that
\[
\limsup_{n\to\infty} n^{(r_1+r_2)/2}\, \eps_n <\infty.
\]
Then  there exists a constant $\kappa\in(0,\infty)$ such that for all $n\in \N$,
\begin{equation}
\E\bigl[ \|\theta_n-\theta^*\|^p\bigr]^{\frac 1p}\leq  \kappa\,   n^{- \frac{r_1+r_2}{2}}.
\end{equation}
\end{cor}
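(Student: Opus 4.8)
The plan is to deduce the bound directly from Theorem~\ref{thm1}: the hypotheses of the corollary are tailored precisely so that the polynomial sequences $\gamma_n,\sig_n,\eps_n$ satisfy Assumption A.3 (conditions (I)--(III), A.1 and A.2 are already assumed). Once A.3 is in force, Theorem~\ref{thm1} supplies a constant $\kappa'\in(0,\infty)$ with $\E[\|\theta_n-\theta^*\|^p]^{1/p}\le \kappa'\,\vel_n$ for all $n\in\N$, and the relevant quantity computes explicitly as
\[
\vel_n=\sqrt{\gamma_n}\,\sig_n=\sqrt{\gamma_0}\,\sigma_0\,n^{-(r_1+r_2)/2}.
\]
Absorbing the factor $\sqrt{\gamma_0}\,\sigma_0$ into the constant then yields the asserted rate $n^{-(r_1+r_2)/2}$. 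In particular $\vel_n>0$ for every $n$, which is the first requirement of A.3.

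It remains to verify A.3(i) and A.3(ii). For (i), the explicit form of $\vel_n$ gives $\eps_n/\vel_n=(\sqrt{\gamma_0}\,\sigma_0)^{-1}\,n^{(r_1+r_2)/2}\eps_n$, so the hypothesis $\limsup_{n\to\infty} n^{(r_1+r_2)/2}\eps_n<\infty$ is exactly the statement that $\limsup_{n\to\infty}\eps_n/\vel_n<\infty$. For (ii), set $\alpha=(r_1+r_2)/2$, so that $\vel_{n-1}/\vel_n=(n/(n-1))^{\alpha}$ and
\[
\frac{1}{\gamma_n}\,\frac{\vel_{n-1}-\vel_n}{\vel_{n-1}}=\frac{n^{r_1}}{\gamma_0}\Bigl(1-(1-\tfrac1n)^{\alpha}\Bigr).
\]
Using the expansion $1-(1-\tfrac1n)^{\alpha}=\tfrac{\alpha}{n}+O(n^{-2})$, valid for every real $\alpha$ and $n\ge 2$, the right-hand side equals $\tfrac{\alpha}{\gamma_0}\,n^{r_1-1}+O(n^{r_1-2})$.

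This is exactly where the case distinction in the hypothesis enters. If $r_1<1$, the exponent $r_1-1$ is negative, so the quantity tends to $0$ and A.3(ii) holds since $L>0$. If $r_1=1$, the quantity converges to $\alpha/\gamma_0=(1+r_2)/(2\gamma_0)$, and A.3(ii) demands that this limit be strictly below $L$, i.e.\ $\gamma_0>(1+r_2)/(2L)$ --- precisely the standing assumption imposed in the borderline case. Hence A.3 holds in all admissible cases and Theorem~\ref{thm1} applies. I expect the only delicate point to be the verification of A.3(ii) at the boundary $r_1=1$: one must track the leading term of $1-(1-\tfrac1n)^{\alpha}$ accurately enough to identify the limit as exactly $(1+r_2)/(2\gamma_0)$, thereby seeing that the strict inequality $\gamma_0>(1+r_2)/(2L)$ is both necessary and sufficient to meet the hypothesis of Theorem~\ref{thm1}.
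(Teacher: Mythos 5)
Your proposal is correct and follows exactly the paper's route: verify A.3(i) from the hypothesis on $(\eps_n)_{n\in\N}$, compute $\lim_{n\to\infty}\gamma_n^{-1}\bigl(1-\vel_n/\vel_{n-1}\bigr)$ (which is $0$ for $r_1<1$ and $(1+r_2)/(2\gamma_0)$ for $r_1=1$, so that A.3(ii) holds under the stated condition on $\gamma_0$), and then invoke Theorem~\ref{thm1} with $\vel_n=\sqrt{\gamma_0}\,\sigma_0\,n^{-(r_1+r_2)/2}$. Your explicit expansion $1-(1-\tfrac1n)^{\alpha}=\tfrac{\alpha}{n}+O(n^{-2})$ merely spells out the limit computation the paper declares ``easy to see,'' so there is no substantive difference.
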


\begin{proof}
We first verify that Assumption A.3 is satisfied. By definition of $\gamma_n$ and $\sigma_n$ we have
\[
v_n =   \sqrt{\gamma_0}\sigma_0  \frac{1}{n^{(r_1+r_2)/2}}.
\]
Thus, A.3(i) is satisfied due to the assumption on the sequence $(\eps_n)_{n\in\N}$. Moreover, it is easy to see that
\[
\lim_{n\to\infty} \frac{1}{\gamma_n}\,\Bigl(1-\frac{\vel_{n}}{\vel_{n-1}}\Bigr) = \begin{cases}
0, & \text{if }r_1 < 1,\\
\frac{1+r_2}{2  \gamma_0}, & \text{if }r_1 = 1
\end{cases}
\]
and  therefore A.3(ii) is  satisfied as well. Since conditions (I)-(III), A.1 and A.2 are part of the corollary, we may apply Theorem~\ref{thm1} to obtain the claimed error estimate.
\end{proof}

\begin{rem}[Exponential decay of noise-levels]\label{rr11}
Assumption  A.3(ii) may also be satisfied in the case that the noise-levels $\sig_n$ have a superpolynomial decay. For instance, if
\[
\gamma_n = \frac{a_1}{n^{r_1}},\quad \sig_n^2 = \frac{a_2}{n^{r_2}}\,\exp(-a_3 n^{r_3})
\]
for all $n\in\N$, where $a_1,a_2,a_3> 0$, $r_1 > 0$, $r_2\in \R$ and $r_3 \in (0,1)$, then 
\[
\lim_{n\to\infty} \frac{1}{\gamma_n}\,\Bigl(1-\frac{\vel_{n}}{\vel_{n-1}}\Bigr) = \begin{cases} 0,  & \text{if }  r_1 < 1-r_3,\\ \frac{a_3 r_3}{2a_1}, & \text{if } r_1=1-r_3,\\ \infty,  & \text{if }  r_1 > 1-r_3.
\end{cases} 
\]

On the other hand side, if the noise-levels $\sig_n$ are decreasing with exponential decay and the step-sizes $\gamma_n$ are monotonically decreasing then Assumption A.3(ii) is typically not satisfied. In fact, if $\gamma_n \ge \gamma_{n+1}$ for $n\ge n_0$, $\lim_{n\to\infty} \gamma_n = 0$ and $\limsup_{n\to\infty} \sig_{n+1}/\sig_n < 1$ then  $\lim_{n\to\infty} \gamma_n^{-1}\,(1-\vel_{n}/\vel_{n-1} ) = \infty$.

The case of an exponential decay of the noise-levels $\sig_n$ can be treated by applying Proposition~\ref{prop0}. Assume that  conditions (I)-(III), A.1 and A.2  are satisfied.
Assume further that there exist $r\in (0,L)$ and $c\in (0,\infty)$ such that for all $n\in\N$,\\[-.1cm]
\begin{itemize}
	\item[(a)]  $\displaystyle{\sig_n^2 \le c\, \exp(-2rn)}$ and \\[-.1cm]
	\item[(b)] $\displaystyle{\eps_n \le c\, \exp\bigl(-r\sum_{k=1}^n \gamma_k\bigr)}$.
\end{itemize} 
Then there exists  $\kappa \in (0,\infty)$  such that for all $n\in \N$,
\begin{equation}\label{extra}
 \E\bigl[\|\theta_n-\theta^*\|^p\bigr]^{1/p}\leq \kappa \exp\Bigl(-r\sum_{k=1}^n \gamma_k\Bigr).
\end{equation}

\begin{proof}[Proof of~\eqref{extra}]
Since $\lim_{n\to\infty}\gamma_n = 0$ and $1-x\le \exp(-x)$ for all $x\in[0,1]$ we have $(1-\gamma_n r) \le \exp(-r\gamma_n)$ for $n$ sufficiently large. Hence there exists $n_1\in\N$ such that for all $n \ge j\ge n_1$,
\begin{equation}\label{1234}
\tau_{j,n}(r) \le \exp\Bigl(-r\sum_{k=j+1}^n \gamma_k\Bigr).
\end{equation}
Using \eqref{1234} as well as Assumption (b) we get for all $n\ge j\ge n_1$,
\begin{equation}\label{2345}
e_{j,n}(r) \le  (1+c)\, \exp\Bigl(-r\sum_{k=1}^n \gamma_k\Bigr).
\end{equation}
Choosing $n_1$ large enough we may also assume that $\gamma_n \le 1/2$ for all $n\ge n_1$.
Employing \eqref{1234} and Assumption  (a) we then conclude that for all $n\ge j \ge n_1$,
\begin{equation}
\label{345}
\begin{aligned}
	s_{j,n}^2(r) & = \sum_{k=j}^{n} \gamma_k^2\,\sig_k^2\, (\tau_{k,n}(r))^2  \le  \sum_{k=j}^{n} (1+c)  \exp\Bigl(-2rk - 2r\sum_{\ell=k+1}^n \gamma_\ell\Bigr)\\
	& = \exp\Bigl(-2r\sum_{\ell=j+1}^n \gamma_\ell\Bigr)\,\sum_{k=j}^{n}   (1+c) \exp\Bigl(-2rk + 2r\sum_{\ell=j+1}^k \gamma_\ell\Bigr) \\
	& \le \exp\Bigl(-2r\sum_{\ell=j+1}^n \gamma_\ell\Bigr) \,\sum_{k=j}^{n}   (1+c)  \exp(-rk )\le
	 \exp\Bigl(-2r\sum_{\ell=j+1}^n \gamma_\ell\Bigr) \,\frac{  (1+c) }{1-\exp(-r)}.
\end{aligned}
\end{equation}
Combining \eqref{1234} to \eqref{345} with Proposition~\ref{prop0} completes the proof  of~(\ref{extra}).
\end{proof}
\end{rem}

So far we proved error estimates for  the single random variables $\theta_n$.  In the following theorem  we 
establish error estimates, which allow to control the quality of approximation for the whole sequence $(\theta_n)_{n\ge k_0}$ starting from some time $k_0$.

To this end  we   employ the following assumption  A.4, which is stronger  than condition A.3.
\medskip
 
\begin{itemize}
\item[{\bf A.4}] (Assumptions on $(\gamma_n)_{n\in\N}$, $(\eps_n)_{n\in\N}$ and $(\sig_n)_{n\in\N}$)\\[.1cm]
We have $\vel_n > 0 $ for all $n\in\N$. Furthermore,  with $L$ according to A.1(i), there exist  $c_1,c_2,  \eta_1\in (0,\infty)$ as well as $\eta_2 \in (0,1]$ such that $\eta_1 > (1-\eta_2)/p$ and \\[-.2cm]
\begin{itemize}
\item[(i)] $\displaystyle{\limsup_{n\to\infty} \frac{\eps_n}{\vel_n} < \infty}$,  \\[.1cm]
\item[(ii)]$\displaystyle{\limsup_{n\to\infty} \frac{1}{\gamma_n}\,\frac{\vel_{n-1} - \vel_{n}}{\vel_{n-1}} <L}
\text{ and } \vel_n\le \frac{c_1}{ n^{\eta_1}}$ for all but finitely many $n\in\N$, \\[.1cm]
\item[(iii)] $\displaystyle{ \gamma_n \le \frac{c_2}{ n^{\eta_2}}}$ \black
for all but finitely many $n\in\N$.
\end{itemize}
\end{itemize}

\begin{theorem}[Robbins-Monro approximation]\label{thm_max} Assume that conditions (I)-(III), A.1, A.2 and  A.4  are satisfied
	and let 
	\[
	\eta^* =\eta_1 - (1-\eta_2)/p.
	\]
	% alte Definition
	%	\[
	%\eta^* = \begin{cases} \min (c_2 L, \eta_1 - (1-\eta_3)/p), & \text{if }\eta_2 = 1,\\
	%\eta_1 - (1-\eta_3)/p,& \text{if } \eta_2 < 1.\end{cases}
	%\]
	Then for all $\eta \in (0, \eta^*)$
there exists a constant $\kappa\in (0,\infty)$ and $n_0\in \N$ such that for all $k_0\ge n_0$
\begin{align}\label{sup_est}
\E\Bigl[ \sup_{k\geq k_0} k^{p\eta} \,\|\theta_k-\theta^*\|^p\Bigr]^{1/p}\leq  \kappa\,    k_0^{-(\eta^*-\eta)}.
\end{align}
%In particular, for every $\delta>0$, one has}
%\[
%\lim_{n\to\infty} \, n^{\eta^*-\delta}\|\theta_n-\theta^*\| = 0\,\text{ %a.s.}
%\]
\end{theorem}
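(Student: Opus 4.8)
The plan is to localise the global contraction estimate of Proposition~\ref{prop0} to short time blocks on which the geometric factor $\tau_{k,n}(r)$ from \eqref{cen} stays bounded away from $0$, to apply on each block the maximal inequality hidden in the proof of Proposition~\ref{prop0}, and then to sum the resulting weighted block maxima. Throughout I take $\theta^*=0$ without loss of generality, fix some $r\in(0,L)$, and write $\tau_{k,n},e_{k,n},s_{k,n}$ for the quantities in \eqref{cen}. Since A.4 is stronger than A.3, Theorem~\ref{thm1} applies and gives the pointwise bound $\E[\|\theta_n\|^p]^{1/p}\le\kappa\, v_n$, so by A.4(ii) we have $\E[\|\theta_n\|^p]^{1/p}\lesssim n^{-\eta_1}$ for all large $n$; this will control the initial value at each block start.

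The decisive step is the choice of blocks. Geometric blocks $[2^m,2^{m+1}]$ are not adequate: when $\eta_2<1$ the cumulative step-size $\sum_j\gamma_j$ over such a block diverges with $m$, so $\tau_{2^m,2^{m+1}}(r)\to0$ and the factors $1/\tau$ appearing on the right-hand side of \eqref{eq_zeta_est} and \eqref{g3} blow up. Instead I fix $c>0$ and define a strictly increasing sequence $N_{m_0}=k_0\le N_{m_0+1}\le\cdots$ (recall $k_0\ge n_0$) by letting $N_{m+1}$ be the smaller of $2N_m$ and the first index $n>N_m$ with $\sum_{j=N_m+1}^n\gamma_j\ge c$. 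By construction the cumulative step-size over each block $B_m=\{N_m,\dots,N_{m+1}\}$ is at most $c+o(1)$, whence $\tau_{N_m,N_{m+1}}(r)\ge\delta$ for some constant $\delta>0$, while the cap $N_{m+1}\le 2N_m$ ensures $k^{p\eta}\le(2N_m)^{p\eta}$ for every $k\in B_m$.

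On a single block the maximal estimate \eqref{eq_zeta_est} together with \eqref{g3}, applied with $k_0$ replaced by $N_m$ and using $\|\theta_k\|\le\|\zeta_k\|$ (as $\tau_{N_m,k}\le1$), yields
\[
\E\Bigl[\max_{k\in B_m}\|\theta_k\|^p\Bigr]^{1/p}\lesssim \E\bigl[\|\theta_{N_m}\|^p\bigr]^{1/p}+\delta^{-1}\bigl(s_{N_m,N_{m+1}}+e_{N_m,N_{m+1}}\bigr),
\]
where the $1/\tau$ factors are harmless because $\tau_{N_m,\cdot}\ge\delta$ on $B_m$. Using A.4(i) (i.e.\ $\eps_j\lesssim v_j$), the identity $v_j^2=\gamma_j\,\sig_j^2$, the bound $\sum_{j\in B_m}\gamma_j\le c+o(1)$, and $v_j\lesssim N_m^{-\eta_1}$ on $B_m$, each of the three terms is $\lesssim N_m^{-\eta_1}$. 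Combining this with the weight bound gives $\E[\max_{k\in B_m}k^{p\eta}\|\theta_k\|^p]\lesssim N_m^{-p(\eta_1-\eta)}$.

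It remains to sum over blocks, which is where the exponent $\eta^*$ is pinned down. I bound $\E[\sup_{k\ge k_0}k^{p\eta}\|\theta_k\|^p]\le\sum_{m\ge m_0}\E[\max_{k\in B_m}k^{p\eta}\|\theta_k\|^p]\lesssim\sum_{m\ge m_0}N_m^{-p(\eta_1-\eta)}$ and group the blocks according to the dyadic levels $[2^\ell k_0,2^{\ell+1}k_0)$ in which $N_m$ falls. Within a level there is at most one doubling block, and the number of $\gamma$-capped blocks is at most $c^{-1}\sum_j\gamma_j\lesssim(2^\ell k_0)^{1-\eta_2}$ by A.4(iii); hence each level contributes $\lesssim(2^\ell k_0)^{-p(\eta_1-\eta)}\bigl((2^\ell k_0)^{1-\eta_2}+1\bigr)$. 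Summing the resulting geometric series over $\ell\ge0$ converges precisely when $p(\eta_1-\eta)>1-\eta_2$, i.e.\ $\eta<\eta^*$, and produces the bound $\lesssim k_0^{-p(\eta^*-\eta)}$; taking $p$-th roots gives \eqref{sup_est}. The main obstacle is exactly this combinatorial bookkeeping: the per-level block count $\lesssim(2^\ell k_0)^{1-\eta_2}$ is what injects the correction $(1-\eta_2)/p$ into the rate and produces the sharp threshold $\eta<\eta^*$, so the block design must be tuned so that the lower bound on $\tau$, the weight control, and this count hold simultaneously.
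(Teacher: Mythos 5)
Your proof is correct, and its skeleton is the paper's: both arguments cut time into blocks over which the cumulative step-size $\sum_j \gamma_j$ is $O(1)$ -- so that the contraction factor $\tau$ from \eqref{cen} stays bounded below on each block -- apply the BDG-based maximal estimate from the proof of Proposition~\ref{prop0} blockwise to get $\E[\max_{\mathrm{block}}\|\theta_k\|^p]\lesssim v^p$ at the block scale, and then sum, with A.4(iii) supplying the block count and the summability threshold $p(\eta_1-\eta)>1-\eta_2$, i.e.\ $\eta<\eta^*$ (your opening observation that plain dyadic blocks fail is exactly why the paper blocks by step-size too). The implementations differ in three respects. First, the paper never restarts: it runs the single normalized process $(\zeta_n)_{n\ge n_1}$ from one fixed $n_1$ and converts the window estimate into a block estimate via the comparison $\tau_{k_0,k_{\ell-1}}\le e^{4r}\,\tau_{k_0,k_\ell}$ together with the global bound $\tau_{n_1,n}+e_{n_1,n}+s_{n_1,n}\le \kappa_1 v_n$ inherited from the proof of Theorem~\ref{thm1}; you instead re-initialize \eqref{eq_zeta_est} and \eqref{g3} at each block start $N_m$, feeding in $\E[\|\theta_{N_m}\|^p]^{1/p}\lesssim v_{N_m}$ from Theorem~\ref{thm1}. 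Your restart is legitimate -- the Remark following Proposition~\ref{prop0} records precisely that the machinery tolerates a random $\cF_{N_m}$-measurable $L_p$ initial value with constants uniform in the starting index -- and it makes each block self-contained. Second, your doubling cap $N_{m+1}\le 2N_m$ is forced on you because you evaluate the error at the block \emph{start} while the weight $k^{p\eta}$ grows within the block; the paper needs no cap because it evaluates both the weight ($k^{p\eta}\le k_\ell^{p\eta}$) and the error ($v_{k_\ell}\le c_1 k_\ell^{-\eta_1}$) at the block \emph{end} $k_\ell$, which works regardless of how long a block is relative to doubling. Third, your dyadic-level count of $\gamma$-capped blocks ($\lesssim (2^\ell k_0)^{1-\eta_2}$ per level, which correctly degenerates to $O(1)$ when $\eta_2=1$) replaces the paper's explicit lower bound $k_\ell\ge K_\ell$ on the block boundaries; both encode the same content of A.4(iii) and yield the same exponent $-p(\eta^*-\eta)=-p(\eta_1-\eta)+(1-\eta_2)$ in $k_0$. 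Net effect: your version is more modular but carries heavier combinatorial bookkeeping; the paper's end-of-block evaluation is the small trick that renders both the cap and the per-block restart unnecessary.
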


\begin{proof}
Clearly, we may assume that $\theta^* = 0$. 
Fix $\eta\in (0, \eta^*)$. 

We again use the quantities introduced in~(\ref{cen}). Since Assumption A.4 is stronger than Assumption A.3 we see from the proof of Theorem~\ref{thm1} that there exist $r\in (0,L)$, $\kappa_1 \in (0,\infty)$ and $n_1\in\N$ such that for all $n\ge n_1$ we have $\gamma_n < 1/L$ and 
\begin{equation}\label{tz1}
 \tau_{n_1,n}(r) + e_{n_1,n}(r) +s_{n_1,n}(r)\le \kappa_1 \vel_n,
\end{equation}
cf.~\eqref{only}.
By  A.4(ii) and A.4(iii) we may further assume that for all $n\ge n_1$,
\begin{equation}\label{oo11}
\vel_n \le c_1/n^{\eta_1} \,\,\text{ and }  \gamma_n < \min(1,1/(2r)).  
\end{equation}

Fix $k_0\ge n_1$ and define a strictly increasing sequence $(k_\ell)_{\ell\in\N_0}$ in $\N$ by
\[
k_\ell=\min\Bigl\{m\geq k_0: \sum_{k=k_0+1}^m \gamma_k \geq \ell\Bigr\}.
\]  
Observing the upper bound for $\gamma_n$  in \eqref{oo11} it is then easy to see that for all $\ell\in\N$,
\begin{equation}\label{abc2}
\sum_{k=k_{\ell-1}+1}^{k_\ell} \gamma_k \le 2.
\end{equation}

In the following we write $\tau_{k,n}$, $e_{k,n}$ and $s_{k,n}$ in place of $\tau_{k,n}(r)$, $e_{k,n}(r)$ and $s_{k,n}(r)$, respectively. We  estimate the decay of the sequence $(\tau_{k_0,k_\ell})_{\ell\in\N}$. Let $\ell\in\N$. 
Using \eqref{oo11}, the fact that $1-x \ge \exp(-2x)$ for all $x\in [0,1/2]$, the estimate \eqref{abc2} and the fact that $1-x \le \exp(-x)$ for all $x\in [0,1]$ we get
\begin{equation}\label{abc3}
\tau_{k_0,k_{\ell-1}} = \tau_{k_0,k_{\ell}} \prod_{k=k_{\ell-1}+1}^{k_\ell} (1- \gamma_k r)^{-1} \le \tau_{k_0,k_{\ell}} \prod_{k=k_{\ell-1}+1}^{k_\ell} \exp(2r\gamma_k) \le  \tau_{k_0,k_{\ell}}\exp(4r) 
\end{equation}
as well as
\begin{equation}\label{abc4}
\tau_{k_0,k_{\ell}} \le \prod_{k=k_0+1}^{k_\ell}\exp(-r\gamma_k)  \le \exp(-r\ell).
\end{equation}

Next, we establish a lower bound for the growth of the sequence $(k_\ell)_{\ell\in\N_0}$, namely
\begin{equation}
\label{abc4a}
k_\ell \ge K_\ell
\end{equation}
for all $\ell\in N_0$, where 
\[
K_\ell   =  \begin{cases}
\bigl(\ell(1-\eta_2)/c_2 + k_0^{1-\eta_2}\bigr)^{\frac{1}{1-\eta_2}}, & \text{if }\eta_2 < 1,\\
k_0\exp(\ell/c_2), & \text{if }\eta_2 = 1.
\end{cases}
\]
 In fact, by A.4(iii) we get
\[
\ell \leq \sum_{k=k_0+1}^{k_\ell}\gamma_k \le \sum_{k=k_0+1}^{k_\ell}\frac{c_2}{k^{\eta_2}} \le c_2\int_{k_0}^{k_\ell} x^{-\eta_2}\, dx  = \begin{cases}
\frac{c_2}{1-\eta_2}\bigl(k_ \ell^{1-\eta_2}-k_0^{1-\eta_2}\bigr), & \text{if }\eta_2 < 1,\\
c_2 \ln\bigl( \frac{k_\ell}{k_0}\bigr), & \text{if }\eta_2 =1.\end{cases}
\]
which yields \eqref{abc4a}.

We are ready to establish the claimed  estimate in $p$-th mean  \eqref{sup_est}. Similar to the proof of Proposition~\ref{prop0} we consider the  process $(\zeta_n )_{n\ge n_1}$  and the martingale $(M_n)_{n\ge n_1}$ given by~\eqref{pro1}, where $k_0$ is replaced by $n_1$. As in the proof of Proposition~\ref{prop0} we obtain the maximum estimate in $p$-th mean \eqref{eq_zeta_est} for the process $(\zeta_n)_{n\ge n_1}$ and the estimate in $p/2$-th mean \eqref{abc6} for the quadratic variation $([M]_n)_{n\ge n_1}$. Combining these two estimates we see that for sufficiently large $n_1$ there exists a constant $\kappa_2\in (0,\infty)$,  such that for every $n\ge n_1$ we have
\[
\E\bigl[\max_{n_1\le k \le n}\|\zeta_{k}\|^p\bigr]\leq \kappa_2 \Bigl(\E\bigl[\|\theta_{n_1}\|^{p}\bigr]+\frac{s_{n_1,n}^p + e_{n_1,n}^p}{\tau_{n_1,n}^p}\Bigr).
\]
 Using the latter inequality as well as  \eqref{abc3},  Theorem~\eqref{thm1}  and \eqref{tz1} we may thus conclude that there exists a constant $\kappa_3\in (0,\infty)$, which may depend on $n_1$ but not on $k_0$ such that for every $\ell\in\N$ we have
\begin{align*}
\E\bigl[\max_{k=k_{\ell-1}+1,\dots,k_\ell} \|\theta_k\|^p\bigr] & \le
\tau_{n_1,k_{\ell-1}}^p\, \E\bigl[\max_{k=k_{\ell-1}+1,\dots,k_\ell} \|\zeta_k\|^p\bigr]\\
& \le \kappa_2\,\exp(4rp) \, \tau_{n_1,k_{\ell}}^p\, \Bigl(\E\bigl[\|\theta_{n_1}\|^{p}\bigr]+\frac{s_{n_1,k_\ell}^p + e_{n_1,k_\ell}^p}{\tau_{n_1,k_\ell}^p}\Bigr)
 \\
& \le \kappa_3\,\bigl(\tau_{n_1,k_{\ell}}^p + s_{n_1,k_{\ell}}^p + e_{n_1,k_{\ell}}^p \bigr)\leq  \kappa_3\kappa_1^{ p }\vel_{k_\ell}^{p} .
\end{align*}

Hence, there exists a constant $\kappa_4\in (0,\infty)$ that does not depend on $k_0$  such that 
\begin{equation}\label{abc8}
\E\bigl[\sup_{ k >  k_0}  k^{p\eta} \|\theta_k\|^p\bigr] \leq \sum_{\ell\in\N} \E\bigl[\max_{k=k_{\ell-1}+1,\dots,k_\ell} k_\ell^{p\eta} \|\theta_k\|^p\bigr]  \leq \kappa_4\,\sum_{\ell\in\N} k_\ell^{p\eta}\, \vel_{k_\ell}^{p}.
\end{equation}
Using \eqref{oo11}, the fact that $ p(\eta_1-\eta) > 1-\eta_2$, due to the choice of $\eta$, and the lower bound in \eqref{abc4a} we obtain
\begin{equation}\label{abc10}
\begin{aligned}
\sum_{\ell\in\N} k_\ell^{p\eta}\, \vel_{k_\ell}^{p} & \le c_1^p\,
\sum_{\ell\in\N} k_\ell^{-p(\eta_1-\eta)}  \le c_1^{p}\, \begin{cases}\sum_{\ell\in\N} \bigl( \ell (1-\eta_2)/c_2 + k_0^{1-\eta_2}\bigr)^{-\frac{p(\eta_1-\eta)}{1-\eta_2}}, & \text{ if }\eta_2<1,\\ \sum_{\ell\in\N}\bigl(k_0 \exp(\ell/c_2) \bigr)^{-p(\eta_1-\eta)  }, & \text{ if }\eta_2=1, \end{cases} \\
& \le \kappa_5 \,k_0^{-p(\eta_1-\eta)+1-\eta_2}
\end{aligned}
\end{equation}
with a constant $\kappa_5\in (0,\infty)$ that does not depend on $k_0$.
 Combining \eqref{abc8}  with \eqref{abc10} yields the claimed maximum estimate in $p$-th mean. 
\end{proof}

In analogy to Corollary~$\ref{Cor1}$ we next 
 treat the particular case of
polynomial step-sizes $\gamma_n$  and noise-levels $\sigma_n$.

\begin{cor}[Polynomial step-sizes and noise-levels]\label{Cor2}
 Assume that conditions (I)-(III), A.1 and A.2 are satisfied  and choose  $L$ according to A.1(i). Take  $\gamma_0,\sigma_0\in (0,\infty)$, $r_1\in (0,1]$ and $r_2\in  (-r_1,\infty)$  with 
 \begin{itemize}
 \item[(a)] $r_1<1 \text{ \ or \ } \bigl(r_1=1\text{ \, and \, } \gamma_0> \frac{1+r_2}{2L}\bigr)$,
 \item[(b)] $\frac{r_1+r_2}{2} > \frac{1-r_1}{p}$ 
 \end{itemize}
 and let for all $n\in\N$,
\[
\gamma_n =  \gamma_0\frac{1}{n^{r_1}},\quad \sig_n^2 = \sigma_0^2\frac{1}{n^{r_2}}.
\]
 Assume further that 
\[
\limsup_{n\to\infty} n^{(r_1+r_2)/2}\, \eps_n <\infty.
\]
Then for all $\eta\in (0,  \frac{r_1+r_2}{2} - \frac{1-r_1}{p})$
 there exists a constant $\kappa\in(0,\infty)$ such that for all $k_0\in \N$,
\begin{equation}
\E\bigl[ \sup_{k\geq k_0} k^{p\eta} \,\|\theta_k-\theta^*\|^p\bigr]^{1/p}\leq  \kappa\,  
 k_0^{-\bigl( \frac{r_1+r_2}{2}-\frac{1-r_1}{p}-\eta\bigr)}.
\end{equation}
%In particular, for every $\delta \in (0,\infty)$ we have
%\[
%\lim_{n\to\infty} \,
%n^{\frac{r_1+r_2}{2} - \frac{1-r_1}{p}-\delta}
%\black \|\theta_n-\theta^*\| = 0\,\text{ a.s.}
%\]
\end{cor}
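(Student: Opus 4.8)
The plan is to deduce the statement from Theorem~\ref{thm_max} by checking that Assumption A.4 holds for the given polynomial choices, exactly as Corollary~\ref{Cor1} deduced its estimate from Theorem~\ref{thm1}. From the definitions of $\gamma_n$ and $\sig_n$ we have
\[
\vel_n = \sqrt{\gamma_n}\,\sig_n = \sqrt{\gamma_0}\,\sigma_0\,\frac{1}{n^{(r_1+r_2)/2}}.
\]
I would set $\eta_1 = (r_1+r_2)/2$ and $\eta_2 = r_1$. Since $r_2 > -r_1$ we have $\eta_1 > 0$, and $r_1 \in (0,1]$ gives $\eta_2 \in (0,1]$; moreover condition (b) is precisely the requirement $\eta_1 > (1-\eta_2)/p$ appearing in A.4. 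With these identifications $\eta^* = \eta_1 - (1-\eta_2)/p = \frac{r_1+r_2}{2} - \frac{1-r_1}{p}$, so the admissible range $\eta \in (0,\eta^*)$ in Theorem~\ref{thm_max} coincides with the one in the statement.

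Next I would verify the three parts of A.4. Part (i) follows from the hypothesis $\limsup_{n\to\infty} n^{(r_1+r_2)/2}\eps_n < \infty$ together with the explicit form of $\vel_n$ above, which jointly give $\limsup_{n\to\infty} \eps_n/\vel_n < \infty$. For part (ii), the polynomial bound $\vel_n \le c_1/n^{\eta_1}$ holds with $c_1 = \sqrt{\gamma_0}\,\sigma_0$ by the display, while the $\limsup$ condition is the computation already performed in the proof of Corollary~\ref{Cor1}: one checks that
\[
\lim_{n\to\infty} \frac{1}{\gamma_n}\Bigl(1 - \frac{\vel_n}{\vel_{n-1}}\Bigr) = \begin{cases} 0, & \text{if } r_1 < 1,\\ \frac{1+r_2}{2\gamma_0}, & \text{if } r_1 = 1,\end{cases}
\]
and condition (a) ensures this limit lies strictly below $L$. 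Part (iii) holds with equality for $c_2 = \gamma_0$ and $\eta_2 = r_1$. Thus A.4 is satisfied.

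Having verified A.4, I would invoke Theorem~\ref{thm_max}, which for each $\eta \in (0,\eta^*)$ provides $\kappa$ and $n_0$ such that the maximal estimate holds for all $k_0 \ge n_0$, with exponent $-(\eta^*-\eta) = -(\frac{r_1+r_2}{2}-\frac{1-r_1}{p}-\eta)$ as claimed. The one remaining point is the passage from $k_0 \ge n_0$ to all $k_0 \in \N$. For each of the finitely many $k_0 \in \{1,\dots,n_0-1\}$ I would bound
\[
\sup_{k\ge k_0} k^{p\eta}\|\theta_k\|^p \le \max_{k_0\le k < n_0} k^{p\eta}\|\theta_k\|^p + \sup_{k\ge n_0} k^{p\eta}\|\theta_k\|^p;
\]
here the first summand is a finite maximum of $L_p$ random variables (all iterates lie in $L_p$, as noted in the proof of Proposition~\ref{prop0}) and the second is integrable by the case already treated. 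Each such $k_0$ thus contributes a finite value, which is absorbed into $\kappa$ after possibly enlarging it. Since the corollary is a direct specialisation of Theorem~\ref{thm_max} and the sole nontrivial limit has already been computed in Corollary~\ref{Cor1}, there is no substantial obstacle; the only step warranting care is this extension to all $k_0 \in \N$, handled by the integrability of the iterates.
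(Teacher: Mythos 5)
Your proof follows exactly the paper's route: you verify Assumption A.4 with the same identifications $\eta_1=(r_1+r_2)/2$, $\eta_2=r_1$, $c_1=\sqrt{\gamma_0}\,\sigma_0$, $c_2=\gamma_0$, reuse the limit computation from the proof of Corollary~\ref{Cor1} together with condition (a) for the first part of A.4(ii), and then invoke Theorem~\ref{thm_max}. Your additional step upgrading the conclusion from $k_0\ge n_0$ to all $k_0\in\N$ --- splitting off the finitely many indices below $n_0$ and using that each $\theta_k$ lies in $L_p$ (as established in the proof of Proposition~\ref{prop0}) before enlarging $\kappa$ --- is correct and in fact addresses a detail that the paper's own proof passes over in silence.
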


\begin{proof} We first verify Assumption A.4.
By definition of $\gamma_n$ and $\sigma_n$ we have 
\[
v_n =    \sqrt{\gamma_0}\sigma_0 \frac{1}{n^{(r_1+r_2)/2}}.
\]
Thus,  A.4(i) is satisfied due to the assumption on the sequence $(\eps_n)_{n\in\N}$
and the first part of A.4(ii) is satisfied due to Assumption (a), see the proof of Corollary~\ref{Cor1}. Observing  Assumption (b) it is obvious that the second part of A.4(ii)
and Assumption A.4(iii) are satisfied for 
\[
\eta_1 =(r_1+r_2)/2,\quad \eta_2= r_1,\quad c_1=  \sqrt{\gamma_0}\sigma_0,\quad c_2=\gamma_0.
\]

Since conditions (I)-(III), A.1 and A.2 are part of the corollary, we may apply Theorem~\ref{thm_max}  to obtain the claimed error estimate. 
\end{proof}

\subsection{{\bf Estimates for the Polyak-Ruppert algorithm}}

Now we turn to the analysis of Polyak-Ruppert averaging. For $n\in\N$ we let 
\begin{equation}\label{average}
\bar\theta_n=\frac 1{\bar b_n}\sum_{k=1}^n 
%\1_{\{\max_{j=k\dots,n-1}|\theta_j- \theta_n|\leq C\}}\,
b_k\,\theta_k,
\end{equation}
where 
$(b_k)_{k\in\N}$ is a fixed sequence of  strictly positive reals and 
\[
\bar b_n=\sum_{k=1}^n b_k.
\]
We estimate  the speed of convergence  of $(\bar \theta_n )_{n\in\N}$ to $\theta^*$ in  $p$-th mean in terms of the sequence  $(\bar v_n)_{n\in\N}$  given by 
\[
\bar v_n = \frac{\vel_n}{\sqrt{n\,\gamma_n}} = \frac{\sig_n}{\sqrt{n}}.
\]
To this end we will replace the set of assumptions A.1, A.2 and A.3 by the following 
set of assumptions B.1, B.2 and B.3. Note that B.2 coincides with A.2 while B.1 is stronger
than A.1 and B.3 is stronger than  A.3, see Remark~$\ref{r003}$ below.

\begin{enumerate}
\item[{\bf B.1}] (Assumptions on  $f$ and $\theta^*$)\label{B1}  \\[.1cm]
There exist  $L, L',L'',\lambda \in (0,\infty)$ and a matrix $H\in\R^{d\times d}$   such that for all $\theta\in\R^d$  \\[-.2cm]
\begin{itemize}
\item[(i)]  $\scp{\theta-\theta^*}{f(\theta)}\le -L\,\|\theta-\theta^*\|^2$,\\[-.2cm]
\item[(ii)] $\scp{\theta-\theta^*}{f(\theta)} \leq -L' \,\|f(\theta)\|^2$  and\\[-.1cm]
\item[(iii)]  $\|f(\theta)- H (\theta-\theta^*)\|\le L''\,\|\theta-\theta^*\|^{1+\lambda}$.\\[-.1cm]
\end{itemize}
\item[{\bf B.2}]  (Assumptions on $(R_n)_{n\in\N}$ and $(D_n)_{n\in\N}$) \\[.1cm] 
It holds  \\[-.2cm]
\begin{itemize}
\item[(i)]  $\displaystyle{ \sup_{n\in\N}\,\mathrm{esssup} \,  \|R_{n}\| <\infty}$ and\\[-.2cm]
\item[(ii)] $\displaystyle{ \sup_{n\in\N} \E[\|D_{n}\|^p]<\infty}$.\\[-.1cm]
\end{itemize}
\item[{\bf B.3}]  (Assumptions on $(\gamma_n)_{n\in\N}$, $(\eps_n)_{n\in\N}$, $(\sig_n)_{n\in\N}$ and $(b_n)_{n\in\N}$) \\[.1cm]
 We have $\sigma_n>0$ for all $n\in\N$. The sequence $(\gamma_n)_{n\in\N}$ is decreasing and the sequences  $(n\gamma_n)_{n\in\N}$ and $(b_n\sig_n)_{n\in\N}$ are increasing. Moreover, with $L$ and $\lambda$ according to B.1 there exist  $\nu,c_1,c_2,c_3\in [0,\infty)$ with $c_2>(\nu+1)/L$ 
% and  $\eta\in (0,1)$ 
such that\\[-.2cm]
\begin{itemize}
\item[(i)] $\displaystyle{\limsup_{n\to\infty} \frac{\eps_n}{\bar v_n} < \infty}$,\\[-.1cm]
\item[(ii)] $\displaystyle{\limsup_{n\to\infty} \frac{1}{\gamma_n}\,\frac{\bar v_{n-1}-\bar v_n  }{\bar v_{n-1}} <L}$  and  $\displaystyle{\vel_n^{1+\lambda} \le c_1\bar v_n }$ for all but finitely many $n\in\N$, \\[-.1cm]
\item[(iii)] %$\displaystyle{\frac {c_2}{n}\le \gamma_n\le \frac{c_3}{n^\eta} }$
$\displaystyle{ \gamma_n\ge \frac {c_2}{n}}$ for all but finitely many $n\in \N$,\\[-.1cm]
\item[(iv)] $\displaystyle{b_m\leq c_3\, b_n \Bigr(\frac mn\Bigr)^{\nu}}$ for all $m\ge n\ge 1$,\\ i.e., the sequence $(b_n)_{n\in\N}$ has at most polynomial growth. 
\end{itemize}
\end{enumerate}
%\end{assu}

\begin{rem}[Discussion of assumptions B.1 and  B.3]\label{r003}
We first show that  Assumption B.3 implies Assumption A.3. Since $(n\gamma_n)_{n\in\N}$ is increasing we have $\eps_n/ v_n = \eps_n/(\sqrt{n\gamma_n} \bar v_n) \le \eps_n/(\sqrt{\gamma_1}\, \bar v_n)$ for every $n\in\N$, which proves that B.3 implies A.3(i). Furthermore, 
\[
\frac{v_n-v_{n+1}}{v_n}=\frac {\bar v_n- \tfrac{\sqrt{(n+1)\gamma_{n+1}}}{\sqrt{n\gamma_n}}\bar v_{n+1}}{\bar v_n}\le \frac{\bar v_n-\bar v_{n+1}}{\bar v_n}
\]
for every $n\in\N$, which proves that B.3 implies A.3(ii).

We add that, due to the presence of Assumption B.1(ii), it is sufficient to require that $f$ satisfies the inequality in B.1(iii) on some open  ball around $\theta^*$.  In fact, let $D\subset \R^d$ and $\delta\in (0,\infty)$ be such that  $B(\theta^*,\delta) = \{\theta\in \R^d\colon \|\theta-\theta^*\|< \delta\}\subset D$. \black Let $c_2,c_3,c'_3,\lambda\in (0,\infty)$ and $H\in \R^{d\times d}$ and consider the conditions
\begin{align*}
\forall \theta\in D\colon & \langle \theta-\theta^*,f(\theta)\rangle \le - c_2\,\|f(\theta)\|^2,\tag{ii}\\
\forall \theta\in D\colon &  \|f(\theta) - H(\theta-\theta^*)\| \le c_3\|\theta-\theta^*\|^{1+\lambda}, \tag{iii}\\
\forall \theta\in  B(\theta^*,\delta) \colon & \|f(\theta) - H(\theta-\theta^*)\| \le c'_3\|\theta-\theta^*\|^{1+\lambda}. \tag{iii'}
\end{align*}
Then 
\begin{equation}\label{aaa3}
f\text{ satisfies (ii) and (iii')}\, \Rightarrow \,  \text{$\|H\| \le 1/c_2$ and $f$ satisfies (iii) for every  }c_3\ge \max(c'_3, \tfrac{2}{c_2\delta^\lambda}), 
\end{equation}
where $\|H\|$ denotes the induced matrix norm of $H$. 

For a proof of~$\eqref{aaa3}$ we first note that (iii') implies that $H (\theta) = \lim_{0 < \varepsilon \to 0} \tfrac{1}{\varepsilon} f(\theta^*+\varepsilon \theta)$ for every $\theta\in \R^d$. Using~$\eqref{aaa0}$  we conclude that $\|H\| \le 1/c_2$. For $\theta\in D\setminus  B(\theta^*,\delta)$ we have $\|\theta-\theta^*\| \ge \delta$. Observing the latter fact and using~$\eqref{aaa0}$ again we conclude
\[
\|f(\theta)- H(\theta-\theta^*)\| \le \|f(\theta)\| + \| H(\theta-\theta^*)\|\le \tfrac{2}{c_2}\|\theta-\theta^*\| \le \tfrac{2}{c_2\delta^\lambda}\|\theta-\theta^*\|^{1+\lambda}.
\]

As an immediate consequence of~\eqref{aaa3} with the choice $\delta\le 1$ we obtain that if $f$ satisfies B.1(ii),(iii) then $f$ satisfies B.1(iii) for  every $\lambda'\in [0,\lambda]$ 
with $L''$ replaced by $\max(L'',2/(L'\delta^{\lambda'}))$.

\end{rem}

\begin{theorem}[Polyak-Ruppert approximation]\label{thm2} Assume that conditions (I)-(III) and  B.1-B.3 are satisfied.
Put $q=\frac p{1+\lambda}$ with $\lambda$ according to B.1. Then there exists $\kappa\in (0,\infty)$  such that the Polyak-Ruppert algorithm \eqref{average} 
satisfies for all $n\in\N$
\[
 \E\bigl[\|\bar\theta_n-\theta^*\|^q\bigr]^{1/q}\leq \kappa\, \bar v_n.
\]
\end{theorem}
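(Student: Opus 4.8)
The plan is to reduce to $\theta^*=0$ and to exploit the local linearisation of $f$ from B.1(iii). Writing $f(\theta)=H\theta+g(\theta)$ with $\|g(\theta)\|\le L''\|\theta\|^{1+\lambda}$, I would first record that $H$ is invertible with $\|H^{-1}\|\le 1/L$: applying B.1(i) to $t\theta$ and letting $t\downarrow 0$ yields $\scp{\theta}{H\theta}\le -L\|\theta\|^2$, whence $\|H\theta\|\ge L\|\theta\|$ by Cauchy--Schwarz. The whole point of averaging is that, through $H^{-1}$, one can write $\bar\theta_n$ as a normalised sum whose martingale part is averaged out.

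Next I would derive the basic identity. Solving the recursion~\eqref{dynsys2} for $H\theta_{k-1}$ gives $H\theta_{k-1}=\gamma_k^{-1}(\theta_k-\theta_{k-1})-g(\theta_{k-1})-\eps_k R_k-\sig_k D_k$; multiplying by $b_k$, summing over $k=1,\dots,n$ and dividing by $\bar b_n$ produces
\[
H\,\tfrac1{\bar b_n}\textstyle\sum_{k=1}^n b_k\theta_{k-1}=\tfrac1{\bar b_n}\bigl(T_n-\Sigma_g-\Sigma_\eps-N_n\bigr),
\]
where $T_n=\sum_k\tfrac{b_k}{\gamma_k}(\theta_k-\theta_{k-1})$ is a telescoping term, $\Sigma_g=\sum_k b_k g(\theta_{k-1})$, $\Sigma_\eps=\sum_k b_k\eps_k R_k$ and $N_n=\sum_k b_k\sig_k D_k$ is a martingale. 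Since $\|H^{-1}\|\le 1/L$, it suffices to bound each of these four sums in $L^q$ by a constant multiple of $\bar b_n\bar v_n$; the passage from the shifted average $\tfrac1{\bar b_n}\sum_k b_k\theta_{k-1}$ to $\bar\theta_n$ I would handle by a direct Abel summation of $\sum_k b_k(\theta_k-\theta_{k-1})$, which leaves only boundary terms and a lower-order $b$-variation sum. Throughout I would use that B.1--B.3 imply A.1--A.3 (Remark~\ref{r003}), so that Theorem~\ref{thm1} gives $\E[\|\theta_k\|^p]^{1/p}\le\kappa\,\vel_k$ for all $k$, and that B.3(iv) forces $\bar b_n\ge c\,n b_n$ (the weights on $[n/2,n]$ being comparable to $b_n$).

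The three sums $N_n,\Sigma_\eps,\Sigma_g$ are then routine. For $N_n$ I would invoke the Burkholder--Davis--Gundy inequality (Theorem~\ref{thm_BDG}) and B.2(ii) to get $\E[\|N_n\|^p]^{2/p}\le\kappa\sum_k b_k^2\sig_k^2$; since $(b_k\sig_k)$ is increasing this is at most $\kappa\,n(b_n\sig_n)^2$, so $\E[\|N_n\|^q]^{1/q}\le\E[\|N_n\|^p]^{1/p}\le\kappa\sqrt n\,b_n\sig_n$, and dividing by $\bar b_n\ge cnb_n$ gives order $\sig_n/\sqrt n=\bar v_n$. For $\Sigma_\eps$ I would use boundedness of $R_k$ and B.3(i) to replace $\eps_k$ by $\bar v_k$, and for $\Sigma_g$ the choice $q=p/(1+\lambda)$ is crucial: it turns the $L^q$ norm of $b_k g(\theta_{k-1})$ into $\kappa\,b_k\bigl(\E[\|\theta_{k-1}\|^p]^{1/p}\bigr)^{1+\lambda}\le\kappa\,b_k\vel_{k-1}^{1+\lambda}$, which by B.3(ii) is at most $\kappa\,b_k\bar v_{k-1}$. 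In both cases one is left with $\tfrac1{\bar b_n}\sum_k b_k\bar v_k$, and since $b_k\bar v_k=b_k\sig_k/\sqrt k$ with $(b_k\sig_k)$ increasing this is bounded by $b_n\sig_n\sum_k k^{-1/2}\le\kappa\sqrt n\,b_n\sig_n$, again of order $\bar b_n\bar v_n$.

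The main obstacle is the telescoping term $T_n$. Abel summation gives $T_n=\tfrac{b_n}{\gamma_n}\theta_n-\tfrac{b_1}{\gamma_1}\theta_0+\sum_{k=1}^{n-1}\bigl(\tfrac{b_k}{\gamma_k}-\tfrac{b_{k+1}}{\gamma_{k+1}}\bigr)\theta_k$. The boundary term $\tfrac{b_n}{\gamma_n\bar b_n}\theta_n$ has $L^q$ norm of order $\tfrac{b_n\sig_n}{\sqrt{\gamma_n}\,\bar b_n}$, which by $\gamma_n\ge c_2/n$ (B.3(iii)) and $\bar b_n\ge cnb_n$ is of order $\bar v_n$, while the deterministic $\theta_0$-term is even smaller. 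The delicate part is the variation sum: by Minkowski and Theorem~\ref{thm1} its $L^q$ norm is at most $\sum_k\bigl|\tfrac{b_k}{\gamma_k}-\tfrac{b_{k+1}}{\gamma_{k+1}}\bigr|\vel_k$, and I must show this is $\le\kappa\,\bar b_n\bar v_n$. I would split $\tfrac{b_{k+1}}{\gamma_{k+1}}-\tfrac{b_k}{\gamma_k}=\tfrac{b_{k+1}-b_k}{\gamma_{k+1}}+b_k\bigl(\tfrac1{\gamma_{k+1}}-\tfrac1{\gamma_k}\bigr)$ and treat the two pieces separately: the step-size piece is controlled because $(n\gamma_n)$ is increasing and $\gamma_n$ is decreasing (so $\gamma_k-\gamma_{k+1}\le\gamma_k/(k+1)$ and $\gamma_{k+1}\ge\gamma_k/2$), and the weight piece by the at most polynomial growth of $(b_n)$ in B.3(iv); in both, the increase of $(b_k\sig_k)$ together with $\gamma_k\ge c_2/k$ reduces each summand to a constant times $b_k\sig_k/\sqrt k$, whose sum is $\le\kappa\sqrt n\,b_n\sig_n$. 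Making these deterministic sequence estimates close with the correct exponent — in particular ensuring that the step-size variation in the borderline regime ``$n\gamma_n$ increasing'' does not destroy the averaging gain — is exactly where the precise form of B.3 is needed, and this is the crux of the proof.
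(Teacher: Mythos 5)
Your decomposition is the classical Polyak--Juditsky route: invert $H$, solve \eqref{dynsys2} for $H\theta_{k-1}$, and average. That is genuinely different from the paper, which instead compares $(\theta_n)$ with the linearised system $y_n=y_{n-1}+\gamma_n(H(y_{n-1}-\theta^*)+\sig_n D_n)$ (Lemma~\ref{le0412-1}), solves that linear recursion explicitly, and applies the BDG inequality to the exact weighted martingale $\sum_k \gamma_k\sig_k\bar\Upsilon_{k,n}D_k$ with the operator bound $\|\bar\Upsilon_{k,n}\|\le\kappa\,b_k/\gamma_k$. Most of your pieces are fine ($N_n$, $\Sigma_\eps$, $\Sigma_g$ with the exponent bookkeeping $q=p/(1+\lambda)$ and B.3(ii), the boundary term of $T_n$, and $\bar b_n\ge \kappa_1 nb_n$), and they match estimates the paper also uses, e.g.\ \eqref{z45}. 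But the step you yourself call the crux --- the variation sum $\sum_{k}\bigl|\tfrac{b_k}{\gamma_k}-\tfrac{b_{k+1}}{\gamma_{k+1}}\bigr|\,\vel_k\le\kappa\,\bar b_n\bar v_n$ --- does not follow from B.3, and this is a genuine gap. B.3(iv) controls the \emph{ratios} $b_m/b_n$ across scales, not the \emph{increments} $b_{k+1}-b_k$: the weights need not be monotone, and nothing in B.3 forces $|b_{k+1}-b_k|\lesssim b_k/k$. Concretely, take $f(\theta)=-2(\theta-\theta^*)$ (so $L=2$, $\lambda=1$), $\gamma_k=k^{-1/2}$, and let $b_k$ oscillate around a constant with per-step relative amplitude of order $\gamma_k$, with $\sig_k=1/b_k$ so that $b_k\sig_k$ is constant; one checks that all of B.1--B.3 hold (the relative drops of $\bar v_n$ are of order $\gamma_n$, so the limsup in B.3(ii) is below $L$), yet $|b_{k+1}-b_k|/\gamma_{k+1}\asymp b_k$ and the term-wise bound gives $\sum_{k\le n} b_k\vel_k\asymp\sum_{k\le n}k^{-1/4}\asymp n^{3/4}$, whereas $\bar b_n\bar v_n\asymp n^{1/2}$. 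So your Minkowski estimate on the Abel-summed term provably overshoots by a power of $n$ in an admissible configuration; the same increment issue, in milder form, affects your passage from $\sum_k b_k\theta_{k-1}$ to $\bar\theta_n$.

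The underlying reason is structural: after summation by parts you estimate $\sum_k(\Delta\tfrac{b_k}{\gamma_k})\theta_k$ term by term via Theorem~\ref{thm1}, which discards the cancellation of the martingale fluctuations inside the $\theta_k$ across different $k$; it is precisely that cancellation which produces the $\sqrt n$ averaging gain under the generality of B.3 (slowly decaying $\gamma_n$, merely polynomially growing and possibly non-monotone $b_n$). The paper keeps the martingale structure intact all the way to the final BDG application, and its key summability estimate for $\bar\Upsilon_{k,n}$ rests on the coupling $c_2>(\nu+1)/L$ between step-size, weight growth and contraction constant --- a hypothesis of B.3 that your argument never invokes, which is a further sign that the purely deterministic sequence estimates cannot close. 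For uniform weights $b_k\equiv 1$ and $\gamma_k\asymp 1/k$ your route does go through (and is then arguably more elementary, avoiding the resolvent products $\Upsilon_{k,n}$ altogether), but as a proof of Theorem~\ref{thm2} under B.1--B.3 as stated it is incomplete at the variation step.
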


For the proof of Theorem \ref{thm2} we follow the approach of the  classical paper~\cite{Pol90} by first comparing the dynamical system $(\theta_n)_{n\ge 0}$ with a linearised version $(y_n)_{n\geq 0}$ given by $y_{0}=\theta_{0}$ and 
\[
y_{n}=y_{n-1}+ \gamma_n \bigl( H (y_{n-1}-\theta^*)+ \sig_n D_{n}\bigr)
\]
for $n\in\N$.

\begin{lemma}\label{le0412-1}
Assume that conditions (I)-(III) and   B.1-B.3 are satisfied. Put 
$q=\frac p{1+\lambda}$ with $\lambda$ according to B.1. Then there exists $\kappa\in (0,\infty)$  such that for all $n\in\N$
\[
\E\bigl[\|\theta_n-y_n\|^q\bigr]^{1/q}\leq \kappa\,\bar v_n.
\]
\end{lemma}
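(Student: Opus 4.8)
The plan is to study the difference process $\Delta_n := \theta_n - y_n$ between the Robbins--Monro iterate and its linearisation, and to exploit that both schemes carry the \emph{same} martingale increment $\gamma_n\sig_n D_n$, so that the noise cancels in $\Delta_n$. Assuming $\theta^*=0$, subtraction of the two recursions gives $\Delta_0=0$ and
\[
\Delta_n = \Delta_{n-1} + \gamma_n\bigl(H\Delta_{n-1} + \rho_n + \eps_n R_n\bigr),\qquad \rho_n := f(\theta_{n-1}) - H\theta_{n-1},
\]
where by B.1(iii) the linearisation error satisfies $\|\rho_n\|\le L''\,\|\theta_{n-1}\|^{1+\lambda}$ a.s. Thus $\Delta_n$ solves a \emph{linear} recursion with transition matrix $\id+\gamma_n H$ and a noise-free inhomogeneity $\gamma_n(\rho_n+\eps_n R_n)$.

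First I would record the contraction of the linear part. Inserting $\theta = tu$ with $\|u\|=1$ into B.1(i) and B.1(iii) and letting $t\to 0$ yields $\scp{\theta}{H\theta}\le -L\,\|\theta\|^2$ for all $\theta$, whence $\|(\id+\gamma H)\theta\|^2 \le (1-2\gamma L+\gamma^2\|H\|^2)\|\theta\|^2 \le (1-\gamma r)^2\|\theta\|^2$ for any fixed $r\in(0,L)$ once $\gamma$ is small. Writing $\Pi_{k,n}=\prod_{j=k+1}^n(\id+\gamma_j H)$, submultiplicativity then gives the operator-norm bound $\|\Pi_{k,n}\|\le \tau_{k,n}(r)$ for $k\ge n_1$, with $\tau$ as in \eqref{cen}. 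Next, since B.1--B.3 imply A.1--A.3 (Remark~\ref{r003}), Theorem~\ref{thm1} applies and yields $\E[\|\theta_{k-1}\|^p]^{1/p}\le \kappa\,\vel_{k-1}$. As $q=p/(1+\lambda)$ satisfies $(1+\lambda)q=p$, this gives
\[
\E[\|\rho_k\|^q]^{1/q}\le L''\,\E[\|\theta_{k-1}\|^p]^{1/q} = L''\bigl(\E[\|\theta_{k-1}\|^p]^{1/p}\bigr)^{1+\lambda}\lesssim \vel_{k-1}^{1+\lambda}\lesssim \bar v_k,
\]
using the bound $\vel_n^{1+\lambda}\le c_1\bar v_n$ from B.3(ii) together with the fact that the consecutive ratios $\vel_{k-1}/\vel_k$ and $\bar v_{k-1}/\bar v_k$ stay bounded (from the $\limsup$ conditions and $\gamma_k\to0$). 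Likewise $\eps_k\|R_k\|\lesssim \bar v_k$ by B.3(i) and B.2(i).

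Unrolling the recursion from $n_1$ and applying Minkowski's inequality, these bounds give
\[
\E[\|\Delta_n\|^q]^{1/q}\le \tau_{n_1,n}(r)\,\E[\|\Delta_{n_1}\|^q]^{1/q} + C\sum_{k=n_1+1}^n \tau_{k,n}(r)\,\gamma_k\,\bar v_k,
\]
where $\Delta_{n_1}\in L_q$ since $\theta_{n_1},y_{n_1}\in L_p$. The concluding step, which is the real content of the estimate, is to show both terms are $O(\bar v_n)$. Choosing $r\in(r_1,L)$ with $r_1$ from B.3(ii) makes $\bar v_n/\tau_{n_1,n}(r)$ increasing, so $\tau_{n_1,n}(r)\le \bar v_n/\bar v_{n_1}$ and the first term is $O(\bar v_n)$, exactly as in the treatment of $s_{n_1,n}$ in Theorem~\ref{thm1}. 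For the sum, setting $\psi(n)=\sum_{k=n_1+1}^n \tau_{k,n}(r)\gamma_k\bar v_k$ one has $\psi(n)=(1-\gamma_n r)\psi(n-1)+\gamma_n\bar v_n$, so $\phi(n)=\psi(n)/\bar v_n$ obeys $\phi(n)\le(1-\gamma_n(r-r_1))\phi(n-1)+\gamma_n$ via the ratio bound and $\tfrac{1-a}{1-b}\le 1-a+b$; hence $\phi(n)\le\phi(n-1)\vee\tfrac1{r-r_1}$ is bounded and $\psi(n)=O(\bar v_n)$. I expect the only delicate bookkeeping to be verifying the ratio bounds uniformly and selecting a single $r$ that simultaneously delivers the propagator contraction and the slow variation of $\bar v$; the conceptual keys — the cancellation of the noise in $\Delta_n$ and the reduction to the already-proven Robbins--Monro rate through $\rho_n$ — are what make the argument succeed.
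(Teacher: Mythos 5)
Your proposal is correct and takes essentially the same route as the paper: the same difference process $z_n=\theta_n-y_n$ with the noise cancelled, the contraction of $I_d+\gamma_n H$ deduced from the scaling limit $H\theta=\lim_{\eps\downarrow 0}\eps^{-1}f(\eps\theta)$ together with B.1(i), the inhomogeneity bound $\E[\|f(\theta_{n-1})-H\theta_{n-1}\|^q]^{1/q}\lesssim \bar v_n$ obtained from Theorem~\ref{thm1}, B.1(iii) and B.3(i),(ii), and a discrete Gronwall conclusion. The only difference is bookkeeping: the paper iterates the one-step inequality for the normalized quantity $\delta_n=\E[\|z_n\|^q]^{1/q}/\bar v_n$ and concludes via $\sum_k \gamma_k=\infty$, while you unroll the recursion with the propagators $\Pi_{k,n}$ and control the resulting convolution sum by the same $\varphi$-type recursion used for $s_{n_1,n}$ in the proof of Theorem~\ref{thm1} --- an equivalent computation.
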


\begin{proof} 
Without loss of generality we may assume that $\theta^*=0$.

Using B.3(i),(ii) we see that there exist $r\in (0,L)$, $n_0\in\N$ and $\kappa_0\in (0,\infty)$ such that for all $n\ge n_0$ we have 
\begin{equation}\label{eeq1}
\eps_n\le  \kappa_0\, \bar v_n
\end{equation}
and 
\begin{equation}\label{eeq2}
\frac{\bar v_{n-1}}{\bar v_{n}} \le  \frac{1}{1-\gamma_n r}.
\end{equation}
Since $\lim_{n\to\infty}\gamma_n = 0$ we may assume that $\gamma_n \le 1/(2r)$ for all $n\ge n_0$.

By Remark~\ref{r003} conditions A.1-A.3 are satisfied and we  may apply  Theorem~\ref{thm1} to obtain the existence of $\kappa_1\in (0,\infty)$ such that for all $n\in \N$,
\begin{equation}\label{eeq3}
\E[\|\theta_n\|^p]^{1/p} \le  \kappa_1\, \vel_n.
\end{equation}
 Furthermore,  estimate~\eqref{e334}  in the proof of Proposition~\ref{prop0} is valid, i.e., there exists $n_1\in\N$ such that for all $n\ge n_1$ and all $\theta\in\R^d$,
\begin{equation}\label{eeq4}
\|\theta + \gamma_n f(\theta)\| \le (1- \gamma_n (r+L)/2)\|\theta\|.
\end{equation}

By Assumption B.1(iii) we have 
\begin{equation}\label{this0}
H\theta = \lim_{\eps \downarrow 0} \eps^{-1} f(\eps \theta)
\end{equation}
for every $\theta\in \R^d$. Using~\eqref{eeq4} we may therefore conclude that for all $n\ge n_1$ and all $\theta\in\R^d$,
\begin{equation}\label{eeq5}
\|\theta + \gamma_n H \theta\| \le (1- \gamma_n (r+L)/2)\|\theta\|.
\end{equation}

Let $n_2= \max(n_0,n_1)$. For  $n\geq n_2$ we put
\[
z_n=\theta_n-y_n \,\text{ and }\, \delta_n = \frac{\E\bigl[\|z_n\|^q\bigr]^{1/q}}{\bar v_n}.
\] 
Let $n>n_2$. Using \eqref{eeq5}, Assumptions B.1(iii), B.2(i) and \eqref{eeq1} we  see that there exists $\kappa_2\in (0,\infty)$ such that 
\begin{align*}
\|z_n\| &  = \|z_{n-1} + \gamma_{n} \bigl(H z_{n-1}+ f(\theta_{n-1})- H\theta_{n-1} +\eps_n\, R_{n}\bigr)\|\\
& \le \|z_{n-1} + \gamma_{n} H z_{n-1}\| + \gamma_{n} \|f(\theta_{n-1})- H\theta_{n-1}\| + \gamma_{n}\,\eps_n\, \|R_{n}\|\\
& \le (1-\gamma_n (r+L)/2)\|z_{n-1}\| + \gamma_{n}\, L''\, \|\theta_{n-1}\|^{1+\lambda} + \kappa_2\, \gamma_{n}\,\bar v_n\quad \text{a.s.}, 
\end{align*}
and  employing  \eqref{eeq2}, \eqref{eeq3},  B.3(ii) and the fact that  $\gamma_n\leq 1/(2r)$  we conclude that
\begin{equation}\label{x2}
\begin{aligned}
\delta_n & \le  (1-\gamma_n (r+L)/2)\,\frac{ \bar v_{n-1}}{ \bar v_n}\, \delta_{n-1} + L''\,\gamma_n\frac{\bar v_{n-1}}{\bar v_n}\, \frac{\E\bigl[ \|\theta_{n-1}\|^p\bigr]^{1/q}}{\bar v_{n-1}} +\kappa_2\gamma_n\\
& \le \frac{1-\gamma_n (r+L)/2}{1-\gamma_n\, r}\, \delta_{n-1} +\frac{L''\,\kappa^{1+\lambda}_1\, c_1}{1-\gamma_n\, r}\, \gamma_n+ \kappa_2\gamma_n\\
& \le (1-\gamma_n (L-r)/2)\,\delta_{n-1} + \bigl(2L''\,\kappa^{1+\lambda}_1\, c_1 + \kappa_2\bigr)\,\gamma_n. 
\end{aligned}
\end{equation}
Put $\kappa_3 = (L-r)/2 >0$ and $\kappa_4= 2L''\,\kappa^{1+\lambda}_1\, c_1 + \kappa_2$. By~\eqref{x2} we have for $n\ge n_2$ that $
\delta_n \le (1-\kappa_3\, \gamma_n )\,\delta_{n-1} +\kappa_4\, \gamma_n$
or, equivalently, 
\[
\frac {\delta_n}{\kappa_4}-\frac 1{\kappa_3} \le  (1-\kappa_3\,\gamma_n)\Bigl(\frac {\delta_{n-1}}{\kappa_4}-\frac 1{\kappa_3}\Bigr),
\]
which yields,
\[
\frac {\delta_n}{\kappa_4}-\frac 1{\kappa_3} \le \bigl(\frac {\delta_{n_2}}{\kappa_4}-\frac 1{\kappa_3} \bigr)\exp\Bigl(- \kappa_3\sum_{k= n_2+1}^n \gamma_k\Bigr).
\]
Since $\sum_{k\in\N}\gamma_k = \infty$, due to  Assumption B.3(iii), we conclude that
\[
\limsup_{n\to\infty} \, (\delta_n/ \kappa_4-1/\kappa_3) \leq  0,
\]
which finishes the proof.
\end{proof}

\begin{proof}[Proof of Theorem \ref{thm2}] 
Without loss of generality we may assume that $\theta^*=0$.

For all $n\in\N$ we have
\begin{equation}\label{z45v}
\bar \theta_n = \frac{1}{\bar b_n}\sum_{k=1}^n b_k (\theta_k-y_k)  + \frac{1}{\bar b_n}\sum_{k=1}^{n} b_k y_k.
\end{equation}
We separately analyze the two terms on the right hand side of~\eqref{z45v}.

By Assumption B.3(iv) it follows that 
\begin{equation}\label{z1}
\bar b_n \geq \frac {b_n}{c_3} \sum_{k=1}^n\Bigl(\frac kn\Bigr)^\nu\geq \kappa_1 n b_n,
\end{equation}
where $\kappa_1=(c_3(\nu+1))^{-1}$.

Employing Lemma~\ref{le0412-1} as well as~\eqref{z1} and the fact that $(b_k\sig_k)_{k\in\N}$ is increasing we see that there exists $\kappa_2\in (0,\infty)$ such that for all $n\in\N$,
\begin{equation}\label{z45}
\E\Bigl[ \Bigl\|\frac{1}{\bar b_n}\sum_{k=1}^n b_k (\theta_k-y_k)\Bigr\|^q\Bigr]^{1/q}\le \frac{\kappa_2 }{n\, b_n}\sum_{k=1}^n b_k  \bar v_k = \frac{\kappa_2}{n \,b_n} \,\sum_{k=1}^n \frac{b_k \,\sig_k}{\sqrt{k}}  \le  \frac{\kappa_2\,\sig_n}{n } \,\sum_{k=1}^n \frac{1}{\sqrt{k}}\le 2\kappa_2\,\bar v_n,
\end{equation}
where we used  $\sum_{k=1}^n 1/\sqrt k\leq 2 \sqrt n$ in the latter step.

Next, put $b_0=0$ and let
\[ 
 \Upsilon_{k,n}=\prod_{\ell=k+1}^{n}(I_d+\gamma_\ell H),
 \]
 where $I_d\in\R^{d\times d}$ is the identity matrix, 
 as well as
 \[
\bar \Upsilon_{k,n} = \sum_{m=k}^n b_m \Upsilon_{k,m} 
 \]
for $0\leq k\leq n$. Then, for all $n\in \N$,
\[
y_n= \Upsilon_{0,n}\, \theta_0+\sum_{k=1}^n \gamma_k\,\sig_{k}\,\Upsilon_{k,n} \, D_k
\] 
and
\[
\sum_{k=1}^n b_k \, y_k =\bar \Upsilon_{0,n}\, \theta_0 + \sum_{k=1}^n \gamma_k\, \sig_{k} \bar \Upsilon_{k,n} \, D_k.
\]
Using the Burkholder-Davis-Gundy inequality we obtain that there exists a constant $\kappa_3\in (0,\infty)$ such that for all $n\in\N$,
\begin{align}\label{eq1}
\E\biggl[\biggl\|\sum_{k=1}^n b_k\, y_k \biggr\|^q\biggr]^{1/q} \leq \kappa_3\, \biggl( \| \bar\Upsilon_{0,n}\|\, \|\theta_0\|+ \E\Bigl[ \Bigl(\sum_{k=1}^n  \gamma_k^2\,\sig_{k}^2 \,\|\bar \Upsilon_{k,n}\|^2 \|D_k\|^2\Bigr)^{q/2}\Bigr]^{1/q}\biggr).
\end{align}
By Assumption B.2(ii)  there exists a constant $\kappa_4\in (0,\infty)$  such that for all $n\in\N$,
\begin{equation}\label{noch}
\begin{aligned}
\E\Bigl[ \Bigl(\sum_{k=1}^n \gamma_k^2\,\sig_{k}^2\, \|\bar \Upsilon_{k,n}\|^2\, \|D_k\|^2\Bigr)^{q/2}\Bigr]^{1/q}&\leq  \Bigl(\sum_{k=1}^n \gamma_k^2\,\sig_k^2\, \|\bar \Upsilon_{k,n}\|^2 \,\E\bigl[  \|D_k\|^q\bigr]^{2/q}\Bigr)^{1/2}\\
&\leq \kappa_4\,  \Bigl(\sum_{k=1}^n \gamma_k^2\,\sig_{k}^2 \,\|\bar \Upsilon_{k,n}\|^2\Bigr)^{1/2}.
\end{aligned}
\end{equation}

We proceed with estimating the norms $\|\bar \Upsilon_{k,n}\|$. Since $c_2 > (\nu+1)/L$ we can fix $r\in ((\nu+1)/c_2,L)$ and  proceed  as in the proof of  Lemma~\ref{le0412-1}   to conclude that there exists $n_1\in\N$ such that for all $n\ge n_1$
\[
\|I_d+\gamma_n H\| \le 1- \gamma_n(r+L)/2,
\]
see \eqref{eeq5}.
The latter fact and the assumption that the sequence $ (n\gamma_n)_{n\in\N}$ is increasing  jointly imply that for $n\ge k\geq  n_1-1$
\begin{align*}
\|\Upsilon_{k,n}\| & \leq  \prod_{\ell=k+1}^{n} \Bigl(1-\frac{\gamma_\ell\,(r+L)}{2}
\Bigr)\leq  \prod _{\ell=k+1}^{n} \Bigl(1- \frac{\gamma_k\,k\,(r+L)}{2\ell}\Bigr) \\ & \le \exp\Bigl(-\frac{(r+L)\gamma_k k}{2}\sum_{\ell=k+1}^n \frac{1}{\ell}\Bigr)\le \Bigl(\frac{k+1}{n+1}\Bigr)^{(r+L) \gamma_k k/2},
\end{align*}
where we used that $1-z\leq e^{-z}$ for all $z\in\R$.
 Employing the latter estimate as well as Assumption B.3(iv) we get that for $n\ge k\ge n_1-1$
\begin{equation}\label{tre1}
\begin{aligned}
\|\bar \Upsilon_{k,n}\|  & \leq  \sum_{\ell=k}^n b_\ell\,\|\Upsilon_{k,\ell}\|\leq c_3\, b_k
 \sum_{\ell=k}^n\Bigl(\frac{\ell}{k}\Bigr)^{\nu}\Bigl(\frac{k+1}{\ell+1 }\Bigr)^{(r+L) \gamma_k k/2} \\ & \le c_3\, b_k\, 2^{\nu}
 \sum_{\ell=k}^n\Bigl(\frac{k+1}{\ell+1}\Bigr)^{(r+L) \gamma_k k/2-\nu}.
\end{aligned}
\end{equation}
Put $\beta_k=(r+L) \gamma_k k/2-\nu$ and note that 
by the choice of $r$ and by B.3(iii) one has for $k$ large enough that
\[
\beta_k= (L-r) \gamma_k k/2 + r \gamma_k k-\nu > (L-r) \gamma_k k/2  + 1.
\]
Choosing $n_1$ large enough we therefore conclude that there exists $\kappa_5\in (0,\infty)$ such that for  $n\geq k\ge n_1-1$,
\[
 \sum_{\ell=k}^n\Bigl(\frac{k+1}{\ell+1}\Bigr)^{(r+L) \gamma_k k/2-\nu} \leq 1+ (k+1)^{\beta_k} \int_{k+1}^{\infty} t^{-\beta_k}\, dt=1+ \frac{k+1}{\beta_k-1}\leq  \frac{ \kappa_5}{\gamma_k}.
\]
In combination with \eqref{tre1}  we see that there  exists $\kappa_6\in (0,\infty)$ such that for all $n\geq k\ge n_1-1$,
\begin{equation}\label{also}
\|\bar \Upsilon_{k,n}\|\leq \kappa_6 \,\frac{b_k}{\gamma_k}.
\end{equation}

For $0\le k < n_1-1 \le n$ we have
\[
\bar \Upsilon_{k,n} = \sum_{\ell=k}^{n_1-2} b_\ell\, \Upsilon_{k,\ell} + \bar\Upsilon_{n_1-1,n} \, \Upsilon_{k,n_1-1}
\]
and, observing \eqref{also}, we may thus conclude that for $0\le k < n_1-1 \le n$,
\begin{equation}\label{v1aa}
\|\bar \Upsilon_{k,n}\| \le \Bigl(\max_{0\le j\le \ell\le n_1-1}\| \Upsilon_{j,\ell}\|\Bigr) \,
\Bigl(\sum_{\ell=1}^{n_1-2} b_\ell + \kappa_6 \,\frac{b_{n_1-1}}{\gamma_{n_1-1}}\Bigr).
 \end{equation}

Using \eqref{also} as well as \eqref{v1aa} and the fact that the sequence $(b_n\sig_n)_{n\in\N}$ is increasing we conclude that there exists $\kappa_7\in (0,\infty)$ such that for all $n\in\N$,
\begin{equation}\label{v1} 
\biggl(\sum_{k=1}^n \gamma_k^2\,\sig_{k}^2 \, \|\bar \Upsilon_{k,n}\|^2\biggr)^{1/2} \le \kappa_7 \, \sqrt{n}\, b_n \, \sig_{n}.
\end{equation}

Combining \eqref{eq1}, \eqref{noch} and \eqref{v1}, employing again that the sequence $(b_n\sig_n)_{n\in\N}$ is increasing and observing~\eqref{z1}   we see that there exists a constant $\kappa_8\in (0,\infty)$ such that for all $n\in\N$ , 
\begin{equation}\label{eq137}
\E\biggl[\biggl\|\frac{1}{\bar b_n}\sum_{k= 1}^n b_k\, y_k \biggr\|^q\biggr]^{1/q} \leq \frac{\kappa_8 }{\bar b_n}\, \sqrt{n}\, b_n \, \sig_{n}  \le  \frac {\kappa_8}{\kappa_1}\, \bar v_n
\end{equation}

Combining \eqref{z45} with \eqref{eq137} completes the proof of the theorem.
\end{proof}

We  consider  the particular case of polynomial  step-sizes $\gamma_n$, noise-levels $\sigma_n$ and weights $b_n$.

\begin{cor}[Polynomial step-sizes, noise-levels and weights]\label{Cor3}
Assume that conditions (I)-(III), B.1 and B.2 are satisfied  and  let $q\in[\frac p{1+\lambda},p)$ with $\lambda$ according to B.1(iii).

  Take $\gamma_0,\sigma_0,b_0  \in (0,\infty)$, 
   $r_1\in (0,1)$,  $r_2\in (-r_1,\infty)$ and $r_3\in[  r_2/2,\infty)$ with 
\[
\frac {1+r_2}{r_1+r_2}\le \frac pq
\]
and let for all $n\in\N$, 
\[
\gamma_n =\gamma_0\frac{1}{n^{r_1}},\quad \sig_n^2 = \sigma_0^2\frac{1}{n^{r_2}}, \quad b_n=  b_0 n^{r_3}.
\]
Assume further that
\[
\limsup_{n\to\infty} n^{(1+r_2)/2}\, \eps_n <\infty
\]

Then there exists a constant $\kappa\in(0,\infty)$ such that for all $n\in\N$,
\[
\E[\|\bar \theta_n-\theta^*\|^{q}]^{1/q} \leq  \kappa\, n^{-\frac 12(r_2+1)}.
\]
\end{cor}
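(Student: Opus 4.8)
The plan is to verify that Corollary~\ref{Cor3} is an instance of Theorem~\ref{thm2}, just as Corollaries~\ref{Cor1} and~\ref{Cor2} were derived from Theorems~\ref{thm1} and~\ref{thm_max}. So first I would compute the relevant auxiliary sequences explicitly under the polynomial choices. With $\gamma_n=\gamma_0 n^{-r_1}$ and $\sig_n^2=\sigma_0^2 n^{-r_2}$ we get $\vel_n=\sqrt{\gamma_0}\,\sigma_0\, n^{-(r_1+r_2)/2}$ and hence
\[
\bar v_n = \frac{\sig_n}{\sqrt n} = \sigma_0\, n^{-(1+r_2)/2}.
\]
The target bound $n^{-(r_2+1)/2}$ is precisely $\kappa\,\bar v_n/\sigma_0$, so the whole task reduces to checking that Assumptions B.3 hold for these polynomial data; then Theorem~\ref{thm2} delivers $\E[\|\bar\theta_n\|^q]^{1/q}\le\kappa\,\bar v_n$ directly for $q=p/(1+\lambda)$, and a monotonicity/interpolation argument extends it to the stated range $q\in[p/(1+\lambda),p)$.

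Next I would grind through the monotonicity prerequisites and the four parts of B.3. The sequence $(\gamma_n)$ is decreasing since $r_1>0$; $(n\gamma_n)=\gamma_0 n^{1-r_1}$ is increasing since $r_1<1$; and $(b_n\sig_n)=b_0\sigma_0 n^{r_3-r_2/2}$ is increasing since $r_3\ge r_2/2$. For B.3(i), the hypothesis $\limsup_n n^{(1+r_2)/2}\eps_n<\infty$ says exactly $\eps_n=O(\bar v_n)$. For the first half of B.3(ii) one computes, as in Corollary~\ref{Cor1},
\[
\lim_{n\to\infty}\frac{1}{\gamma_n}\,\frac{\bar v_{n-1}-\bar v_n}{\bar v_{n-1}}=0<L
\]
because $r_1<1$ forces $1/\gamma_n\to\infty$ only polynomially slower than the $O(n^{-1})$ relative increment of $\bar v_n$; the second half, $\vel_n^{1+\lambda}\le c_1\bar v_n$, becomes the exponent inequality $(1+\lambda)(r_1+r_2)/2\ge(1+r_2)/2$, which I would need to confirm follows from the standing assumption $\tfrac{1+r_2}{r_1+r_2}\le\tfrac pq$ together with $q\ge p/(1+\lambda)$. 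Assumption B.3(iii), namely $\gamma_n\ge c_2/n$ with $c_2>(\nu+1)/L$, holds for large $n$ since $\gamma_n=\gamma_0 n^{-r_1}\gg n^{-1}$; here one takes $\nu=r_3$, and B.3(iv) holds with this $\nu$ and $c_3=1$ because $b_m/b_n=(m/n)^{r_3}$.

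The delicate step, which I expect to be the main obstacle, is the second part of B.3(ii): showing $\vel_n^{1+\lambda}\le c_1\bar v_n$, equivalently $(1+\lambda)(r_1+r_2)\ge 1+r_2$, and doing so using only the corollary's hypotheses on $r_1,r_2,r_3,p,q$ rather than assuming $\lambda$ directly. Since $q\ge p/(1+\lambda)$ gives $1+\lambda\ge p/q$, it suffices to have $\tfrac pq(r_1+r_2)\ge 1+r_2$, i.e. $\tfrac pq\ge\tfrac{1+r_2}{r_1+r_2}$, which is exactly the displayed standing inequality. So the whole corollary hinges on translating that single inequality into B.3(ii), and I would present it carefully because it ties together all of $\lambda$, $p$, $q$, $r_1$ and $r_2$. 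Having verified B.1 and B.2 are inherited as hypotheses and B.3 as above, I would invoke Theorem~\ref{thm2} and, for $q$ strictly larger than $p/(1+\lambda)$, note that the bound for the smallest admissible exponent together with Jensen's inequality (or a direct interpolation in $L_q$) upgrades to the stated $q$.
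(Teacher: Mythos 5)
There is a genuine gap, and it sits exactly where you flag the proof as delicate: the passage from the exponent $q_0=p/(1+\lambda)$ to the stated $q\in[p/(1+\lambda),p)$. Your verification of the monotonicity prerequisites and of B.3(i)--(iv) with the original $\lambda$ is correct (and the exponent inequality $(1+\lambda)(r_1+r_2)\ge 1+r_2$ does follow from $1+\lambda\ge p/q$ and the standing hypothesis), so Theorem~\ref{thm2} legitimately yields $\E[\|\bar\theta_n-\theta^*\|^{q_0}]^{1/q_0}\le\kappa\,\bar v_n$. But the final ``upgrade'' fails: Jensen/H\"older monotonicity of $L_r$-norms goes in the \emph{wrong} direction, since for $q>q_0$ one has $\E[\|X\|^{q_0}]^{1/q_0}\le\E[\|X\|^{q}]^{1/q}$, so a bound at the smallest admissible exponent gives nothing at larger exponents. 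Interpolation does not rescue this either: the only higher-moment bound available is the $L_p$ estimate $\E[\|\bar\theta_n-\theta^*\|^{p}]^{1/p}\lesssim n^{-(r_1+r_2)/2}$ (obtained by averaging the Robbins--Monro bound from Theorem~\ref{thm1}), and since $r_1<1$ this rate is strictly worse than $n^{-(1+r_2)/2}$, so any interpolated $L_q$ bound for $q>q_0$ would have a strictly degraded rate.

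The paper closes this gap by moving in the opposite direction on $\lambda$ rather than on $q$: by Remark~\ref{r003} (consequence of~\eqref{aaa3}, using B.1(ii)), condition B.1(iii) self-improves to any smaller exponent $\lambda'\in(0,\lambda]$ with $L''$ replaced by $\max(L'',2/(L'\delta^{\lambda'}))$. One then chooses $\lambda'=\tfrac pq-1\in(0,\lambda]$, so that $q=p/(1+\lambda')$ \emph{exactly}, and applies Theorem~\ref{thm2} with $\lambda'$ in place of $\lambda$. Your B.3 verification goes through verbatim with $\lambda'$; in particular the second part of B.3(ii) reads
\[
\vel_n^{1+\lambda'} = (\gamma_0\sigma_0^2)^{\frac{p}{2q}}\, n^{-\frac pq\,\frac{r_1+r_2}{2}} \le c_1\,\bar v_n,
\]
which holds precisely because $\tfrac pq\,\tfrac{r_1+r_2}{2}\ge\tfrac{1+r_2}{2}$, i.e.\ the displayed standing inequality $\tfrac{1+r_2}{r_1+r_2}\le\tfrac pq$ --- so your correct identification of that inequality as the crux was on target; it just has to be deployed through the $\lambda\mapsto\lambda'$ reduction rather than through an a posteriori norm upgrade. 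With this modification your argument matches the paper's proof.
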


\begin{proof}
Conditions (I)-(III), B.1 and B.2 are part of the corollary. Further note that  by Remark~\ref{r003}  condition B.1(iii) remains  true when replacing $\lambda$ by $\lambda'=\frac pq-1\in(0,\lambda]$. We verify that condition B.3 holds as well with $\lambda'$ in place of $\lambda$. Then the corollary is a consequence of Theorem~\ref{thm2} and the fact that $\bar v_n=\sigma_n/\sqrt n  = \sigma_0  n^{-\frac 12 (r_2+1)}$.

 Since $r_1\in (0,1)$ it is clear that $(\gamma_n)_{n\in\N}$ is decreasing and $(n\gamma_n)_{n\in\N}$ is increasing. Furthermore,
\[
(b_n\sigma_n)_{n\in\N}= ( \sigma_0 b_0 n^{r_3-r_2/2})_{n\in\N}
\] is increasing since $r_3\geq r_2/2$.
B.3(i)  is satisfied due to the  assumption on $(\eps_n)_{n\in\N}$.  Since $r_1<1$ the limes superior  in B.3(ii) is zero. Moreover,  the second estimate of B.3(ii) holds for an appropriate  positive constant  $c_1$ since  $v_n^{1+\lambda'} = (\sqrt{\gamma_n}\sigma_n)^{1+\lambda'} = ( \gamma_0\sigma_0^2 )^{\frac{p}{2q}} n^{-\frac{p}{q} \frac{r_1+r_2}{2}}$ and \black $\frac pq \frac{r_1+r_2}2\geq \frac {r_2+1}2$ by assumption.  Condition B.3(iii) is satisfied  for any $c_2\in (0,\infty)$ since $r_1 < 1$, and 
thus condition B.3(iv) is satisfied with $\nu= r_3$.  
\end{proof}

\section{Multilevel stochastic approximation}\label{sec3}

Throughout this section we fix 
$p\in [2,\infty)$, 
a probability space $(\Omega,\cF,\P)$,  a scalar product $\langle\cdot,\cdot\rangle$ on $\R^d$ with induced norm $\|\cdot\|$, 
a non-empty set $\cU$ equipped with some $\sigma$-field, a random variable $U\colon \Omega\to \cU$ and a product-measurable function 
\[
F\colon \R^d\times \cU \to \R^d,
\]
such that $F(\theta,U)$ is integrable for every $\theta\in\R^d$.  We consider the function $f\colon\R^d\to\R^d$ given by
\begin{equation}\label{mlf}
f(\theta)= \E[F(\theta, U)]
\end{equation}
and we assume that $f$ has a unique zero $\theta^*\in \R^d$. 

Our goal is to compute $\theta^*$ by means of  stochastic approximation algorithms based on  the multilevel Monte Carlo approach. To this end we suppose that we are given a hierarchical scheme 
\[
F_1,F_2,\ldots\colon \R^d\times \cU\to \R^d
\]
of suitable product-measurable approximations to $F$, such that $F_k(\theta,U)$ is integrable and $F_k(\theta,U) - F_{k-1}(\theta,U)$ can be simulated for all $\theta\in\R^d$ and $k\in\N$, where $F_0=0$.

To each random vector $F_k(\theta,U)-F_{k-1}(\theta,U)$ we assign  a positive number  $C_k\in (0,\infty)$, which  depends only on the level $k$ and may represent a deterministic worst case upper bound of the  average computational cost or  average runtime needed to compute a single simulation
 of $F_k(\theta,U)-F_{k-1}(\theta,U)$. 
As  announced in the introduction
 we impose assumptions  on the approximations $F_k$ and the cost bounds $C_k$ that are similar in spirit to the  classical multilevel Monte Carlo setting, see~\cite{Gil08}.

\begin{enumerate}
\item[{\bf C.1}] (Assumptions on $(F_k)_{k\in\N}$ and $(C_k)_{k\in\N}$)\label{C1}\\[.1cm]
There exist measurable functions $\Gamma_1,\Gamma_2\colon \R^d\to (0,\infty)$  and constants $M\in(1,\infty)$ and $K, \alpha,\beta\in(0,\infty)$ with $\alpha \ge \beta$ such that for all $k\in\N$ and all $\theta\in\R^d$ \\[-.4cm]
\begin{itemize}
\item[(i)]
 $\E[\|F_{k}(\theta,U)-F_{k-1}(\theta,U)-\E[F_{k}(\theta,U)-F_{k-1}(\theta,U)]\|^p]^{1/p}\le  \Gamma_1(\theta)\,M^{-k\beta}$, \\[-.4cm]
\item[(ii)] $\|\E[F_{k}(\theta,U)-F(\theta,U)]\|\le \Gamma_2(\theta)\, M^{-k\alpha}$, and\\[-.4cm]
\item[(iii)] $ C_k \le K M^{k}$.
\end{itemize}
\end{enumerate}

 We combine the Robbins-Monro algorithm with  the classical multilevel approach taken in \cite{Gil08}. The proposed method uses in each Robbins-Monro step a multilevel estimate with  a complexity that is adapted to the actual state of the system and   increases in time. 
 
The algorithm is 
specified by  the parameters $\Gamma_1, \Gamma_2, M, \alpha, \beta$ from Assumption C.1, an initial vector $\theta_0\in\R^d$, 
\begin{itemize}
\item[(i)] a sequence of step-sizes $(\gamma_n)_{n\in\N}\subset (0,\infty)$ tending to 
zero,\\[-.4cm]   
\item[(ii)] a  sequence of bias-levels  $( \eps_n \black)_{n\in\N}\subset  (0,\infty)\black$, and \\[-.4cm]
\item[(iii)] a  sequence of noise-levels $( \sigma_n \black)_{n\in\N}\subset  (0,\infty)$.
\black
\end{itemize}
 The maximal level $m_n(\theta)$ and the number of iterations $N_{n,k}(\theta)$ on level $k\in \{1,\dots,m_n(\theta)\}$ that are used by the  multilevel estimator in the $n$-th Robbins-Monro step depend on $\theta\in\R^d$ and are determined in the following way.
 We take  
\begin{equation}\label{maxlevel}
m_n(\theta) =  1 \vee \Bigl\lceil \frac{1}{\alpha}\,\log_M\Bigl(\frac{\Gamma_2(\theta)}{ \eps_n }\Bigr)\Bigr\rceil\in\N,
\end{equation}
i.e. $m_n(\theta)$ is the smallest $m\in\N$ such that $\Gamma_2(\theta) M^{-\alpha m} \leq  \eps_n$ holds true for the bias bound in Assumption C.1(ii).  Furthermore, 
\begin{equation}\label{itnum}
N_{n,k}(\theta)=\bigl\lceil   \kappa_n(\theta)\,  M^{-k(\beta+1/2)}\bigr\rceil,
\end{equation}
where
\begin{equation}\label{par33}
\kappa_n(\theta)=\begin{cases}(\Gamma_1(\theta)/\sigma_n)^2 \,M^{m_n(\theta)(\frac 12-\beta)_+}, & \text{if }\beta\neq 1/2,\\
 (\Gamma_1(\theta)/\sigma_n)^2 \,m_n(\theta), & \text{if }\beta = 1/2.
 \end{cases}
\end{equation}

Take a sequence $(U_{n,k,\ell})_{n,k,\ell\in \N }$ of independent copies of $U$.
We use 
\begin{equation}\label{mul1}
Z_n(\theta) =\sum_{k=1}^{m_n(\theta)} \frac{1}{N_{n,k}(\theta)} \sum_{\ell=1}^{N_{n,k}(\theta)} \bigl(F_k(\theta, U_{n,k,\ell})- F_{k-1}(\theta,U_{n,k,\ell})\bigr)
\end{equation}
as a multilevel approximation of $f(\theta)$ in
the $n$-th Robbins-Monro step, and we study the sequence of Robbins-Monro approximations
 $(\theta_n)_{n\in\N_0}$ given by
\begin{equation}\label{rm10}
\theta_n =\theta_{n-1}+\gamma_n\, Z_n(\theta_{n-1}).
\end{equation}

We measure the computational cost of  $\theta_n$ by the quantity 
 \begin{equation}\label{cost1}
 \cost_n  =\E\Bigl[\sum_{j=1}^n \sum_{k=1}^{m_j(\theta_{j-1})} N_{j,k}(\theta_{j-1}) \,C_k\Bigr].
 \end{equation}
 That means we take the  mean computational cost for simulating the random vectors  $F_k(\theta,U_{j,k,\ell})-F_{k-1}(\theta,U_{j,k,\ell})$ for the first $n$ iterations into account and we ignore the cost of the involved arithmetical operations. Note, however, that the number of arithmetical operations needed to compute $\theta_n$ is essentially proportional to $\sum_{j=1}^n \sum_{k=1}^{m_j( \theta_{j-1})} N_{j,k}( \theta_{j-1})$, and the  average of the latter quantity is captured by $\cost_n$ under the weak assumption that $\inf_k C_k >0$.
 
 Note further that the  quantity $\cost_n$ depends
 on the parameters  $\Gamma_1,\Gamma_2, M,\alpha,\beta,\theta_0,(\gamma_k)_{k=1,\dots,n}$, $ (\eps_k )_{k=1,\dots,n}$ and   $(\sigma_k )_{k=1,\dots,n}$,  which determine the algorithm $(\theta_k)_{k\in\N}$ up to time $n$. For ease  of notation we  do not explicitly indicate this dependence  in the notation $\cost_n$.

 To obtain upper  bounds of  $\cost_n$ we need the following additional 
 assumption  C.2  on the functions $\Gamma_1,\Gamma_2$, which implies that both the variance estimate in C.1(i) and the bias estimate in C.1(ii) are at most of polynomial growth in $\theta\in\R^d$ with exponents related to the parameters $\alpha$, $\beta$ and $p$.
 
 \begin{enumerate}
 	\item[{\bf C.2}] (Assumption on $\Gamma_1,\Gamma_2$)\label{C2}\\[.1cm]
 	With $\alpha,\beta,\Gamma_1,\Gamma_2$ according to Assumption C.1 there exists $K_1\in (0,\infty)$ and
 	\[
 	\beta_1 \in \begin{cases} [ 0, \min(\beta,1/2)], & \text{if }\beta \neq 1/2,\\  [ 0, 1/2), & \text{if }\beta = 1/2,\end{cases}
 	\]
 	such that for all $\theta\in \R^d$
 	\begin{equation}\label{condgamma}
 	\Gamma_1(\theta) \le K_1\, (1+  \| \theta\|)^{\beta_1 p} \text{ and } \Gamma_2(\theta) \le K_1\, (1+ \|\theta\|)^{\alpha p}.
 	\end{equation}
 \end{enumerate}

We are now in the position to state the central complexity theorem  on the multilevel Robbins-Monro algorithm.

\begin{theorem}[Multilevel Robbins-Monro approximation]\label{thm_Gilesnew}
Suppose that Assumption A.1 is satisfied for the function $f$ given by~\eqref{mlf}  and that Assumptions  C.1 and C.2 are satisfied. Take $L\in (0,\infty)$ according to A.1, take $\Gamma_1, \Gamma_2, M, \alpha, \beta$ according to C.1 and C.2 and let $\theta_0\in\R^d$.
 
Take $r\in (-1,\infty)$, $\gamma_0\in (\frac{1+r}{2L} ,\infty)$, $\sigma_0,\eps_0\in (0,\infty)$,
 and let $\rho= \frac 12 (1+r)$ and for all $n\in\N$,
\[
\gamma_n = \gamma_0 \frac{1}{n},\quad \sigma_n^2 = \sigma_0^2 \frac{1}{n^r},\quad \eps_n = \eps_0 \frac{1}{n^\rho}.
\]
Then for all $\eta\in (0, \rho)$ there exists  $\kappa \in (0,\infty)$  such that 
for all $n\in\N$, 
\[
  \E\bigl[\sup_{k\ge n} k^{\eta p}\,\|\theta_k-\theta^*\|^p]^{1/p}\le \kappa\,n^{-(\rho-\eta)}. \]
In particular, for all $\delta \in (0,\infty)$ we have $\lim_{n\to\infty} n^{\rho-\delta}\, \|\theta_n-\theta^*\| = 0$ almost surely.

If additionally $\alpha > \beta \wedge 1/2$ and $r > \frac{\beta \wedge 1/2}{\alpha -  \beta \wedge 1/2}$ then there exists $\kappa'\in (0,\infty)$ such that for all $n\in\N$,
\[
\cost_n\le \kappa'\,\begin{cases}
 n^{2\rho}, &\text{ if }\beta >1/2,\\
 n^{2\rho} \,(\ln (n+1))^2, &\text{ if }\beta=1/2,\\
 n^{2\rho\, \bigl(1 + \tfrac{1-2\beta}{2\alpha}\bigr)}, &\text{ if }\beta < 1/2.
 \end{cases}
 \]
\end{theorem}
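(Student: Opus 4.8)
The plan is to recognise the multilevel Robbins--Monro recursion~\eqref{rm10} as an instance of the abstract scheme~\eqref{dynsys2}, verify assumptions A.1 and A.2 for that instance, invoke the sup-type error estimate of Corollary~\ref{Cor2}, and then treat the cost~\eqref{cost1} as a bookkeeping exercise built on the uniform moment bound supplied by the error part. Writing $\theta_n=\theta_{n-1}+\gamma_n Z_n(\theta_{n-1})$ I decompose the estimator~\eqref{mul1} as $Z_n(\theta_{n-1})=f(\theta_{n-1})+\eps_n R_n+\sig_n D_n$ with, conditionally on $\cF_{n-1}$,
\begin{equation*}
\eps_n R_n = \E[F_{m_n(\theta_{n-1})}(\theta_{n-1},U)] - f(\theta_{n-1}), \qquad \sig_n D_n = Z_n(\theta_{n-1}) - \E[Z_n(\theta_{n-1})\mid \cF_{n-1}].
\end{equation*}
Since $\sum_{k=1}^{m}(F_k-F_{k-1})=F_m$ telescopes and $F_0=0$, the conditional mean of $Z_n(\theta_{n-1})$ is exactly $\E[F_{m_n(\theta_{n-1})}(\theta_{n-1},U)]$, so $R_n$ is previsible and $D_n$ a martingale difference. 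The bias bound C.1(ii) together with the defining property~\eqref{maxlevel} of the maximal level gives $\|\eps_n R_n\|\le \Gamma_2(\theta_{n-1})M^{-\alpha m_n(\theta_{n-1})}\le \eps_n$, hence $\|R_n\|\le 1$ and A.2(i) holds.

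The technical heart is A.2(ii), i.e.\ $\sup_n\E[\|D_n\|^p]<\infty$. Here $\sig_n D_n$ is a sum of independent centred vectors indexed by level $k$ and sample $\ell$; applying the Burkholder--Davis--Gundy inequality (Theorem~\ref{thm_BDG}) and then Minkowski's inequality in $L_{p/2}$ to the quadratic variation reduces the $p$-th moment to the $L_2$-variance allocation
\begin{equation*}
\E\bigl[\|\sig_n D_n\|^p \mid \cF_{n-1}\bigr]^{2/p}\ \lesssim\ \sum_{k=1}^{m_n(\theta_{n-1})} \frac{\Gamma_1(\theta_{n-1})^2\,M^{-2k\beta}}{N_{n,k}(\theta_{n-1})}\ \lesssim\ \frac{\Gamma_1(\theta_{n-1})^2}{\kappa_n(\theta_{n-1})}\sum_{k=1}^{m_n(\theta_{n-1})} M^{k(1/2-\beta)},
\end{equation*}
where the per-level bound comes from C.1(i) and the last step from the choice~\eqref{itnum} of $N_{n,k}$. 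The geometric sum is $O(1)$ if $\beta>1/2$, equals $m_n$ if $\beta=1/2$, and is $O(M^{m_n(1/2-\beta)})$ if $\beta<1/2$; in each regime the definition~\eqref{par33} of $\kappa_n$ is tuned so that the factor $\Gamma_1(\theta_{n-1})^2$ cancels \emph{exactly} and the remaining growth is matched, leaving $\E[\|D_n\|^p\mid\cF_{n-1}]^{2/p}\lesssim \sig_n^2/\sig_n^2=O(1)$ uniformly in $\theta_{n-1}$ and $n$. With A.2 verified and A.1 assumed, the choices $\gamma_n=\gamma_0/n$, $\sig_n^2=\sigma_0^2n^{-r}$, $\eps_n=\eps_0 n^{-\rho}$ with $\rho=(1+r)/2$ meet the hypotheses of Corollary~\ref{Cor2} with $r_1=1$, $r_2=r$ (condition (a) is $\gamma_0>(1+r)/(2L)$, condition (b) reduces to $r>-1$, and $n^{(1+r)/2}\eps_n=\eps_0$ is bounded), which yields the stated sup-error bound for every $\eta\in(0,\rho)$ with exponent $-(\rho-\eta)$. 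The almost sure claim follows since $A_n:=\sup_{k\ge n}k^\eta\|\theta_k-\theta^*\|$ is non-increasing with $\E[A_n^p]\to 0$, so $A_n\to 0$ a.s., and one lets $\eta\uparrow\rho$.

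For the cost, Corollary~\ref{Cor2} (equivalently Theorem~\ref{thm1}, since $\vel_n\asymp n^{-\rho}$) also gives $\sup_j\E[\|\theta_j\|^p]<\infty$. Using $C_k\le KM^k$ from C.1(iii) and $N_{j,k}\le\kappa_j M^{-k(\beta+1/2)}+1$, the per-step cost splits into a sampling term and a finest-level term,
\begin{equation*}
\sum_{k=1}^{m_j(\theta_{j-1})} N_{j,k}(\theta_{j-1})\,C_k\ \lesssim\ \kappa_j(\theta_{j-1})\sum_{k=1}^{m_j(\theta_{j-1})} M^{k(1/2-\beta)}\ +\ M^{m_j(\theta_{j-1})}.
\end{equation*}
Inserting~\eqref{par33} and the identity $M^{m_j(\theta)}\asymp(\Gamma_2(\theta)/\eps_j)^{1/\alpha}$ from~\eqref{maxlevel} turns both terms into explicit powers of $\eps_j^{-1}$ and $\sig_j^{-1}$ times products of $\Gamma_1(\theta_{j-1}),\Gamma_2(\theta_{j-1})$. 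The role of C.2 is precisely that, after collecting exponents, the total power of $(1+\|\theta_{j-1}\|)$ never exceeds $p$ (this uses $\beta_1\le\beta\wedge 1/2$ in~\eqref{condgamma}), so the uniform $p$-th moment makes every expectation finite and $j$-uniform. Taking expectations and summing over $j=1,\dots,n$, and using $r+1=2\rho$, the sampling term yields $n^{2\rho}$, $n^{2\rho}(\ln(n+1))^2$, or $n^{2\rho(1+(1-2\beta)/(2\alpha))}$ according to $\beta>1/2$, $\beta=1/2$, $\beta<1/2$, while the finest-level term yields $n^{\rho/\alpha+1}$; the extra hypotheses $\alpha>\beta\wedge 1/2$ and $r>\tfrac{\beta\wedge 1/2}{\alpha-\beta\wedge 1/2}$ are exactly equivalent to $n^{\rho/\alpha+1}$ being dominated by the sampling exponent, giving the three stated bounds.

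I expect the main obstacle to be the uniform noise estimate A.2(ii): the passage from the $L_2$-variance allocation to a genuine $p$-th moment bound must not degrade the sharp tuning encoded in $\kappa_n$. In the critical case $\beta=1/2$ a level-by-level triangle inequality in $L_p$ loses a factor $\sqrt{m_n}$, so routing through BDG and Minkowski in $L_{p/2}$ (rather than $L_p$-Minkowski across levels) is essential; the exact cancellation of $\Gamma_1(\theta_{n-1})^2$ against $\kappa_n(\theta_{n-1})$ is what keeps the noise level bounded independently of the state. Everything else — checking the Corollary~\ref{Cor2} hypotheses and the cost bookkeeping — is routine once C.2 is used to keep all $\theta$-powers at or below $p$.
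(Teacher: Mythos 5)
Your proposal is correct and follows essentially the same route as the paper: the decomposition $Z_n(\theta_{n-1})=f(\theta_{n-1})+\eps_n R_n+\sig_n D_n$ with the verification of A.2 via BDG and Minkowski in $L_{p/2}$ (including the exact cancellation through $\kappa_n$ in~\eqref{par33}) is precisely Proposition~\ref{prop1}(i), the error bound is obtained exactly as in the paper from Corollary~\ref{Cor2} with $r_1=1$, $r_2=r$, and your cost bookkeeping, including the role of C.2 in keeping the power of $(1+\|\theta_{j-1}\|)$ at $p$ and the domination of the finest-level term $n^{\rho/\alpha+1}$ under the extra hypotheses, reproduces Proposition~\ref{prop1}(ii) and the paper's concluding case analysis. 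The only cosmetic difference is that you spell out the almost-sure claim via monotonicity of $\sup_{k\ge n}k^{\eta}\|\theta_k-\theta^*\|$, which the paper leaves implicit.
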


The implementation of the multilevel Robbins-Monro approximation from Theorem \ref{thm_Gilesnew} requires the knowledge of a positive lower bound for the  parameter $L$ from Assumption A.1.
This difficulty is overcome by applying the Polyak-Ruppert averaging methodology. That means we consider  the approximations 
\begin{equation}\label{pr10}
\bar\theta_n=\frac 1{\bar b_n}\sum_{k=1}^n b_k\,\theta_k,
\end{equation}
were  $(\theta_n)_{n\in\N}$ is the multilevel Robbins-Monro scheme  specified by~\eqref{rm10}, $(b_k)_{k\in\N}$ is a sequence of  positive reals and 
\[
\bar b_n=\sum_{k=1}^n b_k
\]
for $n\in\N$, see Section \ref{sec2}.

Note that the cost to compute $\bar \theta_n$ differs from the cost to compute $\theta_n$ at most by a deterministic factor, which does not depend on $n$. Therefore we again  measure the computational cost for the computation of $\bar \theta_n$ by the quantity $\cost_n$ given by \eqref{cost1}. 

We state the second complexity theorem, which concerns   Polyak-Ruppert averaging.
\begin{theorem}[Multilevel Polyak-Ruppert approximation]\label{thm_Gilesnew_2}
Suppose that Assumption B.1 is satisfied for the function $f$ given by~\eqref{mlf}  and that Assumptions  C.1 and C.2 are satisfied. Take $\lambda\in (0,\infty)$ according to B.1, take $\Gamma_1, \Gamma_2, M, \alpha, \beta$ according to C.1 and C.2 and let $\theta_0\in\R^d$. 

Let $q\in[\frac p{1+\lambda},p)$. Take  $\gamma_0, \sigma_0, \eps_0, b_0\in (0,\infty)$,  $r_1\in(0,1)$, $r_2\in(-r_1,\infty)$ and $r_3\in [r_2/2,\infty)$ with
\[
\frac{1+r_2}{r_1+r_2} \leq \frac pq,
\]
and let  $\rho=\frac 12 (1+r_2)$ and for all $n\in\N$,
\[
\gamma_n= \gamma_0 \frac 1{n^{r_1}},\quad \eps_n= \eps_0  \frac 1{n^\rho},\quad \sigma^2_n= \sigma_0^2 \frac 1{n^{r_2}}, \quad b_n= b_0 \, n^{r_3}.
\]
Then there exists $\kappa\in(0,\infty)$ such that for all $n\in\N$
$$
\E\bigl[\|\bar \theta_n-\theta^*\|^q]^{1/q}\le \kappa\,n^{-\rho}.
$$
 If additionally  $\alpha > \beta \wedge 1/2$ and $r_2 \ge \frac{\beta \wedge 1/2}{\alpha -\beta \wedge 1/2}$ 
%$$
%r_2\geq \begin{cases} \frac1{2\alpha-1}, & \beta\geq 1/2\\
%\frac \beta{\alpha-\beta}, & \beta<1/2\end{cases} \ \text{ or, equivalently, } \ \rho\geq %\begin{cases} \frac\alpha{2\alpha-1}, & \beta\geq 1/2\\
%\frac 12\frac \alpha{\alpha-\beta}, & \beta<1/2\end{cases},
%$$
 then there exists $\kappa'\in(0,\infty)$ such that for all $n\in\N$
\[
\cost_n\le \kappa'\,\begin{cases}
 n^{2\rho}, &\text{ if }\beta >1/2,\\
 n^{2\rho} \,(\ln (n+1))^2, &\text{ if }\beta=1/2,\\
 n^{2\rho\, \bigl(1 + \tfrac{1-2\beta}{2\alpha}\bigr)}, &\text{ if }\beta < 1/2.
\end{cases}
\]
\end{theorem}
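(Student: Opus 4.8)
The plan is to recognise the multilevel Robbins--Monro recursion~\eqref{rm10} as a special case of the abstract dynamical system~\eqref{dynsys2} and then to invoke Corollary~\ref{Cor3} for the error bound. Concretely, I would write $Z_n(\theta_{n-1}) = f(\theta_{n-1}) + \eps_n R_n + \sigma_n D_n$ by setting $\eps_n R_n = \E[Z_n(\theta_{n-1})\mid\cF_{n-1}] - f(\theta_{n-1})$ and $\sigma_n D_n = Z_n(\theta_{n-1}) - \E[Z_n(\theta_{n-1})\mid\cF_{n-1}]$, where $\cF_{n-1}$ is generated by $\theta_0$ and the copies of $U$ used up to step $n-1$. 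Since the fresh copies $U_{n,k,\ell}$ are independent of $\cF_{n-1}$, the process $(R_n)$ is previsible and $(D_n)$ is a sequence of martingale differences, so conditions (I)--(III) hold with the prescribed $\gamma_n,\eps_n,\sigma_n$.

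First I would verify Assumption~B.2 for this decomposition. For the bias, the telescoping identity together with $F_0=0$ gives $\E[Z_n(\theta)\mid\cF_{n-1}]=\E[F_{m_n(\theta)}(\theta,U)]$ for $\theta=\theta_{n-1}$, so by C.1(ii) and the very definition~\eqref{maxlevel} of $m_n$ one has $\|\E[Z_n(\theta)\mid\cF_{n-1}]-f(\theta)\|\le\Gamma_2(\theta)M^{-\alpha m_n(\theta)}\le\eps_n$; hence $\|R_n\|\le 1$ and B.2(i) holds. For the noise I would estimate the conditional $p$-th central moment of $Z_n$ level by level: by the Marcinkiewicz--Zygmund inequality each level contributes at most $C_p\,N_{n,k}(\theta)^{-1/2}\Gamma_1(\theta)M^{-k\beta}$ in $L_p$, and, combining the independent levels by a Rosenthal-type inequality, the allocation~\eqref{itnum}--\eqref{par33} of $N_{n,k}$ and $\kappa_n$ is precisely the one that balances $\sum_{k=1}^{m_n}M^{k(1/2-\beta)}$ in all three regimes $\beta\gtrless 1/2$, rendering the whole conditional central moment $\lesssim\sigma_n$ with the factor $\Gamma_1(\theta)$ cancelling. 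Thus $\sup_n\E[\|D_n\|^p]<\infty$ and B.2(ii) holds. Since B.1 is assumed and the prescribed polynomial $\gamma_n,\sigma_n,\eps_n,b_n$ satisfy the hypotheses of Corollary~\ref{Cor3} (note $\bar v_n=\sigma_n/\sqrt n=\sigma_0\,n^{-\rho}$ and $\limsup_n n^{(1+r_2)/2}\eps_n=\eps_0<\infty$), that corollary yields $\E[\|\bar\theta_n-\theta^*\|^q]^{1/q}\le\kappa\,\bar v_n=\kappa\sigma_0\,n^{-\rho}$, which is the first assertion.

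For the cost bound I would start from~\eqref{cost1} and bound a single summand using $C_k\le KM^k$ and $N_{j,k}(\theta)\le\kappa_j(\theta)M^{-k(\beta+1/2)}+1$, giving $\sum_{k=1}^{m_j(\theta)}N_{j,k}(\theta)C_k\lesssim\kappa_j(\theta)\sum_{k=1}^{m_j(\theta)}M^{k(1/2-\beta)}+M^{m_j(\theta)}$. The first term reduces, in the three cases $\beta\gtrless 1/2$, to $(\Gamma_1(\theta)/\sigma_j)^2$, to $(\Gamma_1(\theta)/\sigma_j)^2 m_j(\theta)^2$, and to $(\Gamma_1(\theta)/\sigma_j)^2 M^{2m_j(\theta)(1/2-\beta)}$ respectively, while $M^{m_j(\theta)}\lesssim(\Gamma_2(\theta)/\eps_j)^{1/\alpha}$ by~\eqref{maxlevel}. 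Inserting the polynomial growth bounds~\eqref{condgamma} from C.2, one checks that every resulting power of $(1+\|\theta_{j-1}\|)$ is at most $p$ (using $\beta_1\le\beta\wedge\tfrac12$), so taking expectations and using $\sup_j\E[\|\theta_{j-1}\|^p]<\infty$ --- which follows from Theorem~\ref{thm1}, applicable because B.1--B.3 imply A.1--A.3 --- bounds $\E$ of each summand by a pure power of $j$ (times a logarithmic factor when $\beta=1/2$). Summing over $j=1,\dots,n$ and invoking the extra hypothesis $r_2\ge\frac{\beta\wedge 1/2}{\alpha-\beta\wedge 1/2}$ (with $\alpha>\beta\wedge 1/2$ ensuring the fraction is well defined), which is exactly the inequality guaranteeing that the level-refinement term dominates the top-level term $M^{m_j}$, produces the three stated complexity exponents.

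The hardest part I expect to be the uniform noise estimate B.2(ii): one must show that the conditional $p$-th central moment of the state-dependent estimator $Z_n$ is controlled by $\sigma_n$ alone, independently of the current iterate $\theta_{n-1}$, which is where the Giles-type allocation~\eqref{itnum}--\eqref{par33} and the interplay of the Marcinkiewicz--Zygmund and Rosenthal inequalities across the three regimes $\beta\gtrless 1/2$ must be handled with care. A secondary but delicate point in the cost analysis is verifying that, after inserting C.2, all powers of $\|\theta_{j-1}\|$ stay at or below $p$ so that the single uniform $p$-th moment bound suffices, and that the stated condition on $r_2$ selects the correct leading term.
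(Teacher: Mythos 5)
Your proposal is correct and follows essentially the same route as the paper: the decomposition into a bounded bias term and martingale differences with the verification of B.2 (including the cancellation of $\Gamma_1(\theta)$ via the allocation~\eqref{itnum}--\eqref{par33}) is exactly Proposition~\ref{prop1}(i) --- the paper combines the levels via the Burkholder--Davis--Gundy inequality and the triangle inequality in $L^{p/2}$, which is equivalent to your Marcinkiewicz--Zygmund/Rosenthal argument --- the error bound is then obtained from Corollary~\ref{Cor3}, and your cost computation reproduces Proposition~\ref{prop1}(ii) together with the exponent comparison carried out in the proof of Theorem~\ref{thm_Gilesnew}. The only cosmetic difference is that you derive the uniform moment bound $\sup_j\E[(1+\|\theta_j\|)^p]<\infty$ from Theorem~\ref{thm1} via Remark~\ref{r003}, whereas the paper cites Corollary~\ref{Cor1}; both are valid.
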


\begin{rem}\label{comp}
Assume the setting of Theorem~\ref{thm_Gilesnew} or~\ref{thm_Gilesnew_2} and let $e_n  =\E[\|\theta_n -\theta^*\|^p]^{1/p}$ or $e_n =\E[\| \bar\theta_n\black-\theta^*\|^{ q}\black ]^{1/ q}$, respectively.
Then there exists $\kappa\in (0,\infty)$ such that for every $n\in\N$,
	\begin{equation}\label{ce}
	\cost_n \le \kappa \begin{cases}e_n^{-2}, & \text{if } \beta>1/2,\\
	e_n^{-2}(\ln(1+e_n^{-1}))^{ 2},  & \text{if }\beta=1/2,\\
	e_n^{-2-\frac {1-2\beta}{2\alpha}}, &\text{if } \beta<1/2.
	\end{cases}
	\end{equation}
Note that these bounds for the computational cost in terms of the  error coincide with the respective bounds for the multilevel computation of a single expectation presented  in~\cite{Gil08}.
\end{rem}

\black
\begin{rem}\label{general}
The multilevel stochastic approximation algorithms analysed in Theorems~\ref{thm_Gilesnew} and~\ref{thm_Gilesnew_2} are based on evaluations of the increments $F_k-F_{k-1}$. Consider, more generally, a sequence of measurable mappings $P_k\colon\R^d\times \cU\to\R^d$, $k\in\N$, such that for all $k\in\N$,
\[
\E[P_k(\theta,U)]=\E[F_k(\theta,U)-F_{k-1}(\theta,U)]
\] 
and $C_k$ is a worst case cost bound for simulating $P_k(\theta,U)$.
Then  Theorems~\ref{thm_Gilesnew} and~\ref{thm_Gilesnew_2} are still valid for the algorithm obtained by using $P_k$ as a substitute for the increment $F_k-F_{k-1}$ in \eqref{mul1} if Assumption C.1(i) is satisfied with $P_k$ in place of $F_k-F_{k-1}$. 
\end{rem}

The proofs of Theorems~\ref{thm_Gilesnew} and~\ref{thm_Gilesnew_2} are based on the following proposition, which  shows that under Assumptions C.1(i),(ii) the scheme~\eqref{rm10} can be represented as a Robbins-Monro scheme of the general form~\eqref{dynsys2}  studied  in Section~\ref{sec2}.
%As a consequence, error estimates for the scheme~\eqref{rm10} and its Polyak-Ruppert averaging %can be directly inferred from the respective error estimates in Section~\ref{sec2}.
 It  further  provides an estimate of the  computational cost~\eqref{cost1} based on Assumptions C.1(iii) and C.2 only.

\begin{prop}\label{prop1}\mbox{} 
\begin{itemize}
\item[(i)]Suppose that Assumptions C.1(i),(ii)  are satisfied.
Let $\cF_n$ denote the $\sigma$-field generated by the variables $U_{m,k,\ell}$ with $m,k,\ell\in\N$ and $m\le n$, and let $\cF_0$ denote the trivial $\sigma$-field. The scheme $(\theta_n)_{n\in\N_0}$ given by~\eqref{rm10} satisfies 
\[
\theta_n = \theta_{n-1}+\gamma_n \bigl( f(\theta_{n-1})+ \eps_n\, R_n +\sig_n\, D_{n}\bigr)
\]
for every $n\in\N$, where $(R_n)_{n\in\N}$ is a previsible process with respect to the filtration $(\cF_n)_{n\in\N_0}$ and $ (D_n)_{n\in\N}$ is a sequence of martingale differences with respect to  $(\cF_n)_{n\in\N_0}$ and  $(R_n)_{n\in\N}$ and $ (D_n)_{n\in\N}$ satisfy Assumption A.2.
\item[(ii)] Suppose that  Assumptions  C.1(iii)  and C.2 are satisfied. Then
there exists a constant $\kappa \in (0,\infty)$ such that for all $n\in\N$ the  computational cost~\eqref{cost1}  of $\theta_n$ given by \eqref{rm10} satisfies
 \begin{equation}\label{costest}
 \cost_n \le \kappa \,\max_{k=1,\dots,n-1} \E[(1+\|\theta_k\|)^p]\,\begin{cases}\sum_{j=1}^n \bigl(\eps_j^{-1/\alpha} + 
 \sigma_j^{-2}\, \eps_j^{- \frac{(1-2\beta)_+} {\alpha}}\bigr), & \text{if }\beta \neq  1/2,\\[.2cm]
 \sum_{j=1}^n \bigl(\eps_j^{-1/\alpha} + \sigma_j^{-2}\, (\log_M(1/\eps_j))^{2}\bigr),& \text{if }\beta = 1/2.
 \end{cases}
 \end{equation}
\end{itemize}
\end{prop}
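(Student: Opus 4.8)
\medskip
\noindent\textbf{Decomposition (part (i)).}
The plan is to read off the representation by conditioning on $\cF_{n-1}$. Since $\theta_{n-1}$ is $\cF_{n-1}$-measurable while the copies $U_{n,k,\ell}$ entering $Z_n$ in~\eqref{mul1} are independent of $\cF_{n-1}$, every inner average has conditional mean $\E[F_k(\theta,U)-F_{k-1}(\theta,U)]$ evaluated at $\theta=\theta_{n-1}$, and telescoping over $k=1,\dots,m_n(\theta_{n-1})$ together with $F_0=0$ gives $\E[Z_n(\theta_{n-1})\mid\cF_{n-1}]=\E[F_{m_n(\theta_{n-1})}(\theta_{n-1},U)]$. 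I would therefore set
\[
\eps_n R_n:=\E\bigl[F_{m_n(\theta_{n-1})}(\theta_{n-1},U)-F(\theta_{n-1},U)\bigr],\qquad \sig_n D_n:=Z_n(\theta_{n-1})-\E[Z_n(\theta_{n-1})\mid\cF_{n-1}],
\]
so that~\eqref{rm10} takes the form~\eqref{dynsys2}. By construction $R_n$ is a deterministic function of $\theta_{n-1}$, hence previsible, and $D_n$ is a martingale difference. Assumption A.2(i) is then immediate: the bias bound C.1(ii) and the defining property~\eqref{maxlevel} of $m_n$ give $\|\eps_n R_n\|\le\Gamma_2(\theta_{n-1})M^{-\alpha m_n(\theta_{n-1})}\le\eps_n$, so $\|R_n\|\le1$ almost surely.

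\medskip
\noindent\textbf{The moment bound A.2(ii).}
Conditionally on $\cF_{n-1}$ the vector $\sig_n D_n=\sum_{k,\ell}N_{n,k}(\theta_{n-1})^{-1}X_{n,k,\ell}$ is a sum of independent centred summands, where the $X_{n,k,\ell}$ are the centred increments of $F_k-F_{k-1}$ and satisfy $\E[\|X_{n,k,\ell}\|^p]^{1/p}\le\Gamma_1(\theta_{n-1})M^{-k\beta}$ by C.1(i). Using a moment inequality for sums of independent centred vectors (Rosenthal's inequality, or the estimate of Theorem~\ref{thm_BDG} applied to the martingale of partial sums) I would bound $\E[\|\sig_nD_n\|^p\mid\cF_{n-1}]$ by $C_p(V^{p/2}+W)$ with $V=\sum_k N_{n,k}^{-1}\E[\|X_{n,k,1}\|^2]$ and $W=\sum_k N_{n,k}^{-(p-1)}\E[\|X_{n,k,1}\|^p]$. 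Inserting $N_{n,k}\ge\kappa_n M^{-k(\beta+1/2)}$ and the definition~\eqref{par33} of $\kappa_n$, the variance reduces to $V\le\kappa_n^{-1}\Gamma_1^2\sum_k M^{k(1/2-\beta)}$, and in each of the three regimes of~\eqref{par33} the geometric sum is matched so that $\Gamma_1(\theta_{n-1})$ cancels and $V\le C\sig_n^2$ with a universal $C$.

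\medskip
\noindent\textbf{Main obstacle.}
The delicate point is to control the higher-order part $W$ uniformly in $\theta_{n-1}$ in the presence of the ceiling in~\eqref{itnum}. I would distinguish two regimes: if $\Gamma_1(\theta_{n-1})$ is small then $\kappa_n\le1$ and every $N_{n,k}$ equals $1$, so the conditional $p$-th moment reduces to $C\Gamma_1^p\le C\sig_n^p$; if $\Gamma_1(\theta_{n-1})$ is large then the $\kappa_n$-term governs the relevant levels and the negative powers of $\Gamma_1$ produced by $W$ are exactly balanced against the positive powers of $\sig_n$ via $\Gamma_1\ge\sig_n$, again giving $C\sig_n^p$. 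In either case $\sig_n^{-p}\E[\|\sig_nD_n\|^p\mid\cF_{n-1}]$ is bounded by a constant not depending on $\theta_{n-1}$, so $\sup_n\E[\|D_n\|^p]<\infty$, which is A.2(ii).

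\medskip
\noindent\textbf{The cost bound (part (ii)).}
Here I would start from C.1(iii), $C_k\le KM^k$, and the elementary bound $N_{j,k}(\theta)\le1+\kappa_j(\theta)M^{-k(\beta+1/2)}$, which give $N_{j,k}(\theta)\,C_k\le K\bigl(M^k+\kappa_j(\theta)M^{k(1/2-\beta)}\bigr)$. Summing the geometric series over $k=1,\dots,m_j(\theta)$ produces two contributions. The first, $\sum_k M^k\le CM^{m_j(\theta)}$, becomes (up to constants) $(\Gamma_2(\theta)/\eps_j)^{1/\alpha}$ by~\eqref{maxlevel}, and C.2 turns $\Gamma_2(\theta)^{1/\alpha}$ into $(1+\|\theta\|)^p$, producing the $\eps_j^{-1/\alpha}$ term. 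The second, $\kappa_j(\theta)\sum_k M^{k(1/2-\beta)}$, splits into the three cases of~\eqref{par33}: a convergent sum for $\beta>1/2$ giving $\Gamma_1^2\sig_j^{-2}$; a sum of order $M^{m_j(\theta)(1/2-\beta)}$ for $\beta<1/2$ that combines with the factor in $\kappa_j$ to yield $\Gamma_1^2\sig_j^{-2}\eps_j^{-(1-2\beta)/\alpha}$ times a power of $(1+\|\theta\|)$; and a factor $m_j(\theta)^2$ for $\beta=1/2$. The purpose of C.2 is to keep the total power of $(1+\|\theta\|)$ equal to $p$: one uses $\beta_1\le\min(\beta,1/2)$ when $\beta<1/2$, while for $\beta=1/2$ the strict inequality $\beta_1<1/2$ absorbs the squared logarithm $(\log_M(1+\|\theta\|))^2$ coming from $m_j(\theta)$ into $(1+\|\theta\|)^p$. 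Finally I would take expectations, bound $\E[(1+\|\theta_{j-1}\|)^p]$ by $\max_k\E[(1+\|\theta_k\|)^p]$ (the deterministic $j=1$ term being a harmless constant), pull this factor out of the sum over $j$, and arrive at~\eqref{costest}.
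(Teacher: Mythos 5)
Your treatment of the bias ($R_n$, previsibility, $\|R_n\|\le 1$) and your entire part (ii) coincide with the paper's proof step by step, including the use of $N_{j,k}(\theta)\le 1+\kappa_j(\theta)M^{-k(\beta+1/2)}$, the bound $M^{m_j(\theta)}\le M\,\eps_j^{-1/\alpha}\Gamma_2(\theta)^{1/\alpha}$, and the absorption of the squared logarithm via $\beta_1<1/2$ when $\beta=1/2$. The genuine divergence is the verification of A.2(ii). The paper does not invoke Rosenthal's inequality and never produces a higher-order term $W$: it applies the Burkholder--Davis--Gundy inequality to the martingale of partial sums and then the triangle inequality in $L^{p/2}$ to the variables $\|Y_i\|^2$, which gives $\E[\|Z_n(\theta)-\E[Z_n(\theta)]\|^p]^{2/p}\le c\sum_{k} N_{n,k}(\theta)^{-1}\,\E[\|P_{n,k,1}(\theta)-\E[P_{n,k,1}(\theta)]\|^p]^{2/p}\le c\,\Gamma_1(\theta)^2\,\kappa_n(\theta)^{-1}\sum_{k=1}^{m_n(\theta)}M^{k(1/2-\beta)}$. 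This is exactly your $V$-computation with $(\E[\|X\|^p])^{2/p}$ in place of $\E[\|X\|^2]$, and the definition \eqref{par33} of $\kappa_n$ cancels $\Gamma_1(\theta)$ in all three regimes. In other words, your ``main obstacle'' is an artefact of choosing Rosenthal; the BDG-plus-Minkowski route (which you mention only parenthetically) dissolves it, and that is what the paper does.

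If you insist on the Rosenthal route, your sketch of the $W$-control is true in its conclusion but imprecise where it matters. For $\beta<1/2$ the dichotomy ``$\kappa_n\le 1$'' versus ``$\Gamma_1\ge\sig_n$'' does not exhaust the cases: since $\kappa_n(\theta)=(\Gamma_1(\theta)/\sig_n)^2M^{m_n(\theta)(1/2-\beta)}$, one can have $\kappa_n>1$ while $\Gamma_1<\sig_n$, and in that mixed regime the balancing cannot come ``via $\Gamma_1\ge\sig_n$''. What actually works is to split the sum defining $W$ at the crossover level $k^*$ determined by $M^{k^*(\beta+1/2)}\approx\kappa_n$, above which the ceiling in \eqref{itnum} forces $N_{n,k}=1$; the exponent identity $-(p-1)+\bigl(\tfrac{p-1}{2}-\beta\bigr)/\bigl(\beta+\tfrac12\bigr)=-\tfrac{p\beta}{\beta+1/2}$ then makes both pieces comparable to $\Gamma_1^p\,\kappa_n^{-p\beta/(\beta+1/2)}$, and inserting \eqref{par33} (using the extra factor $M^{m_n(1/2-\beta)}$ in $\kappa_n$, not $\Gamma_1\ge\sig_n$) yields the desired bound $c\,\sig_n^p$ uniformly in $\theta$. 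So your route closes, but only after this case analysis that ``exactly balanced'' glosses over; either supply it or switch to the paper's argument via Theorem~\ref{thm_BDG} and the $L^{p/2}$ triangle inequality.
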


\begin{proof}
 We first prove statement (i) of the proposition. 
Put $f_n(\theta) = \E[F_n(\theta,U)]$ and let
\[
P_{n,k,\ell}(\theta) = F_k(\theta, U_{n,k,\ell})- F_{k-1}(\theta,U_{n,k,\ell})
\]
for $n,k,\ell\in \N$ and $\theta\in \R^d$. 
By Assumptions C.1(i),(ii)  we have 
\begin{equation}\label{j1}
\E[\|P_{n,k,\ell}(\theta) -\E[P_{n,k,\ell}(\theta)] \|^p]^{1/p} \le \Gamma_1(\theta) \,M^{-k\beta} \ \text{ and } \  \|f_k(\theta)- f(\theta)\|\le  \Gamma_2(\theta)\, M^{-k\alpha}
\end{equation}
for all $n,k,\ell\in \N$ and $\theta\in \R^d$. 

By \eqref{j1} and the definition~\eqref{maxlevel} of $m_n(\theta)$ we get  for all $n\in\N$ and $\theta\in\R^d$ that
\begin{align}\label{eq9283}
\|\E[Z_n(\theta)]-f(\theta)\|= \|\E[F_{m_n(\theta)}(\theta,U)]-f(\theta)\|\leq  \Gamma_2(\theta)\, M^{-m_n(\theta) \alpha} \leq \eps_n. 
\end{align}
Furthermore, by the Burkholder-Davis-Gundy inequality, the triangle inequality on the $L^{p/2}$-space, $\eqref{j1}$ and the definition $\eqref{itnum}$ of $N_{n,k}(\theta)$ there exists $c_1\in (0,\infty)$,
which only depends on $p$, such that
\begin{align*}
& \E[\|Z_n(\theta)-\E[Z_n(\theta)]\|^p]^{2/p} \\
& \qquad \qquad = \E\Bigl[ \Bigl\| \sum_{k=1}^{m_n(\theta)} \sum_{\ell=1}^{N_{n,k}(\theta)} \frac 1{N_{n,k}(\theta)}\bigl(  P_{n,k,\ell}(\theta)-\E[P_{n,k,\ell}(\theta)]\bigr)\Bigr\|^p\Bigr]^{2/p}\\
& \qquad \qquad\leq  c_1\, \E\Bigl[ \Bigl( \sum_{k=1}^{m_n(\theta)} \sum_{\ell=1}^{N_{n,k}(\theta)}  \frac 1{N_{n,k}(\theta)^2}\bigl\|  P_{n,k,\ell}(\theta)-\E[P_{n,k,\ell}(\theta)]\bigr\|^2\Bigr)^{p/2}\Bigr]^{2/p}\\
&\qquad \qquad\leq c_1\,\sum_{k=1}^{m_n(\theta)}  \frac 1{N_{n,k}(\theta)^2} \sum_{\ell=1}^{N_{n,k}(\theta)}  \E\bigl[\bigl\|  P_{n,k,\ell}(\theta)-\E[P_{n,k,\ell}(\theta)]\bigr\|^p\bigr]^{2/p}\\
&\qquad \qquad\leq  c_1  \, \Gamma_1(\theta)^2\sum_{k=1}^{m_n(\theta)}  \frac 1{N_{n,k}(\theta)} \,  M^{-2\beta k}  \leq  c_1 \, \frac{\Gamma_1(\theta)^2}{\kappa_n(\theta)}\sum_{k=1}^{m_n(\theta)}  M^{k(1/2-\beta)}. 
\end{align*}
Recalling the definition of $\kappa_{n}(\theta)$, see \eqref{par33}, we conclude that 
there exists $c_2\in (0,\infty)$ such that for all $n\in\N$ and $\theta\in\R^d$
\begin{equation}\label{eq32989}
\E[\|Z_n(\theta)-\E[Z_n(\theta)]\|^p]^{2/p} \le c_2\, \sigma_n^2.
\end{equation}

With
\begin{align*}
&R_n  :=\frac{1}{ \eps_n}\,\E[Z_n(\theta_{n-1})- f(\theta_{n-1})|\cF_{n-1}]\text{ \ and}\\
&D_n :=\frac{1}{\sigma_n}\, \bigl( Z_n(\theta_{n-1})- f(\theta_{n-1})  -\E[ Z_n(\theta_{n-1})- f(\theta_{n-1})|\cF_{n-1}]\bigr)
\end{align*}
we obtain that $\theta_n= \theta_{n-1} + \gamma_n(f( \theta_{n-1}) +  \eps_n\, R_n + \sig_n D_n)$. We  verify  Assumption A.2.

The  process $(R_n)_{n\in\N}$  is predictable and using the independence  of $(U_{n,k,\ell})_{k,\ell\in\N}$ and $\cF_{n-1}$  we conclude with  \eqref{eq9283} that  $\sup_{n\in\N}\|R_n\| \le 1$.  By the latter  independence it further  follows that $(D_n)_{n\in\N}$ is a sequence of martingale differences, which satisfies  $\sup_{n\in\N}\E[\|D_n\|^p] \le c_2^{p/2}$ as a consequence of \eqref{eq32989}.  This completes the proof of statement (i). 

We turn to the proof of statement (ii). 
Let $j\in\N$ and $\theta\in\R^d$. Using Assumption C.1(iii),  we conclude that there exists $c\in (0,\infty)$, which only depends on $K$, $M$ and $\beta$ such that
\begin{equation}\label{fff1}
\begin{aligned}
\sum_{k=1}^{m_j(\theta)} N_{j,k}(\theta)\, C_k & \le \sum_{k=1}^{m_j(\theta)} \bigl(1+\kappa_j(\theta)\, M^{-k(\beta+ 1/2)}\bigr) K M^k \\ & \le \frac{K}{1-M^{-1}} M^{m_j(\theta)} +  K\kappa_j(\theta) \,
\begin{cases}
\frac{M^{m_j(\theta)(1/2-\beta)_+}}{1-M^{-|1/2-\beta|}}, & \text{if }\beta \neq 1/2,\\
m_j(\theta), & \text{if }\beta = 1/2
\end{cases}\\
& \le c\, M^{m_j(\theta)} +  c\, \frac{\Gamma_1^2(\theta)}{\sigma_j^2} \begin{cases}
M^{m_j(\theta)(1-2\beta)_+}, & \text{if }\beta \neq 1/2,\\
(m_j(\theta))^2, & \text{if }\beta = 1/2.
\end{cases}
\end{aligned}
\end{equation}
Furthermore,  \eqref{maxlevel} yields that
\begin{equation}\label{mm1}
m_j(\theta) \le \alpha^{-1} (\log_M(\Gamma_2(\theta)) + \log_M(\eps_j^{-1})) +1\text{ \ and \ }M^{m_j(\theta)} \le M\, \eps_j^{-1/\alpha} (\Gamma_2(\theta))^{1/\alpha}.
\end{equation}
Combining \eqref{fff1} with \eqref{mm1} and employing Assumption C.2 we see that there exists $c_1\in (0,\infty)$, which only depends on $K$, $K_1$,  $M$, $\beta$ and $\alpha$, such that 
\begin{equation}\label{fgf1}
\begin{aligned}
\sum_{k=1}^{m_j(\theta)} N_{j,k}(\theta)\, C_k  & \le c_1 \eps_j^{-1/\alpha} (1+\|\theta\|)^p \\
& \quad + c_1 \sigma_j^{-2} (1+\|\theta\|)^{2\beta_1 p}
\begin{cases}
\eps_j^{-(1-2\beta)_+/\alpha} (1+ \|\theta\|)^{(1-2\beta)_+ p} , & \text{if }\beta \neq 1/2,\\
(\log_M(\eps_j^{-1}(1+\|\theta\|)^{\alpha p}))^2, & \text{if }\beta = 1/2.
\end{cases}
\end{aligned}
\end{equation}
Suppose that  $\beta\not=1/2$. Then~\eqref{fgf1} implies
\[
\sum_{k=1}^{m_j(\theta)} N_{j,k}(\theta)\, C_k   \le c_1  (1+\|\theta\|)^p  \bigl(\eps_j^{-1/\alpha} +\sigma_j^{-2} \eps_j^{-\frac {(1-2\beta)_+}\alpha}\bigr),
\]
which finishes the proof for the case $\beta\not=1/2$. In the case $\beta=1/2$ we have $\beta_1 < 1/2$ and therefore 
the existence of a constant $c_2\in (0,\infty)$, which does not depend on $\theta$, such that 
\[
(\log_M (1+ \|\theta\|^{\alpha p}))^2 \le c_2 \, (1+ \|\theta\|)^{p(1-2\beta_1)}. 
\]
One completes the proof of statement (ii)  by combining the latter estimate with \eqref{fgf1}.	
	\end{proof}

We turn to the proof of Theorem~\ref{thm_Gilesnew}.

\begin{proof}[Proof of Theorem~\ref{thm_Gilesnew}] 
The error estimate follows by Corollary~\ref{Cor2} since Assumption A.1 is part of the theorem and Assumptions (I)-(III) and A.2 are satisfied by Proposition~\ref{prop1}(i). 

It remains to prove the cost estimate.  The error estimate implies that $\sup_{n\in\N} \E[(1+\|\theta_n\|)^p] <  \infty$. Employing  Proposition~\ref{prop1}(ii) \black we thus see that there exists $c_1\in (0,\infty)$ such that for every $n\in\N$
\black
\begin{equation}\label{uu2}
\cost_n  \le c_1 \begin{cases}
\sum_{j=1}^n \bigl(j^{\rho/\alpha} + 
 j^{2\rho(1+(1/2-\beta)_+/\alpha)-1}\bigr), & \text{if }\beta \neq  1/2,\\[.2cm]
\sum_{j=1}^n \bigl(j^{\rho/\alpha} + \rho^2\, j^{2\rho-1}(\log_M(j))^{2}\bigr),& \text{if }\beta = 1/2.
\end{cases}
\end{equation}
Hence there exists $c_2\in (0,\infty)$ such that for every $n\in\N$
\begin{equation}\label{uu2a}
\cost_n 
 \le  c_2 \begin{cases}
n^{\rho/\alpha+1} + n^{2\rho(1+(1/2-\beta)_+/\alpha)}, & \text{if }\beta \neq  1/2,\\[.2cm]
n^{\rho/\alpha+1} + n^{2\rho}(\log_M(n))^{2},& \text{if }\beta = 1/2.
\end{cases}
\end{equation}
If $\beta\ge 1/2$ then  $\alpha > 1/2$ and $r>\frac{1}{2\alpha-1}$, which implies that $\rho/\alpha < r = 2\rho-1$ and therefore 
\[
n^{\rho/\alpha+1} \le n^{2\rho} = n^{2\rho(1+(1/2-\beta)_+/\alpha)}.
\]
 If $\beta < 1/2$ then  $\alpha > \beta$ and $r>\frac{\beta}{\alpha-\beta}$, which implies that $2\rho \frac{\alpha-\beta}{\alpha} > 1$ and therefore
 \[
 n^{\rho/\alpha+1} \le  n^{\rho/\alpha+2\rho \frac{\alpha-\beta}{\alpha}} = n^{2\rho(1+(1/2-\beta)_+/\alpha)}.
 \]
This completes the proof. 
\end{proof}

We proceed with the proof of Theorem~\ref{thm_Gilesnew_2}.

%\begin{rem}
%For fixed $q\in [\frac p{1+\lambda},p)$ and $r_1\in(0,1)$ the choice
%$$
%r_2=\frac {q-pr_1}{p-q}=-r_1+\frac{1(1-r_1)}{p-q}>-r_1
%$$
%leads to the minimal value for $r_2$ and, respectively, for $\rho$ for which the error estimate of the theorem is still applicable. 
%\end{rem}

\begin{proof}[Proof of Theorem~\ref{thm_Gilesnew_2}]
The error estimate follows with Corollary~\ref{Cor3} since Assumption B.1  is part of the theorem  and Assumptions  (I)-(III) and  B.2 hold  by Proposition~\ref{prop1}(i).
The cost estimate in the theorem is proved in the same way as the cost estimate in  Theorem~\ref{thm_Gilesnew}.  One only observes that $\sup_{n\in\N} \E[(1+\|\theta_n\|)^p] <  \infty$ is valid since the assumptions in Corollary~\ref{Cor3} are stronger than the assumptions in Corollary~\ref{Cor1}.
\end{proof}

\section{General convex closed domains}\label{sec4}

In this section we  extend
the results of  Sections~\ref{sec2} and~\ref{sec3} 
to convex domains. In the following $D$ denotes a convex and closed subset  of $\R^d$ and $f\colon D\to\R^d$ is a function with a unique zero $\theta^*\in D$.  We start with the Robbins-Monro scheme.

 Let
\[
\mathrm{pr}_D\colon \R^d\to D
\]
denote the orthogonal projection on $D$ with respect to the given inner product $\langle\cdot,\cdot\rangle$ on $\R^d$ and define the dynamical system $(\theta_n)_{n\in\N_0}$ by the recursion
\begin{equation}\label{projdyn}
\theta_{n}= \mathrm{pr}_D\bigl(\theta_{n-1}+\gamma_n ( f(\theta_{n-1})+ \eps_n\, R_n +\sig_n\, D_{n})\bigr)
\end{equation}
in place of~\eqref{dynsys2}, where $\theta_0\in D$ is a deterministic starting value in $D$.
Then the following fact follows by a straightforward  modification in the proofs of Proposition~\ref{prop0}
and Theorem~\ref{thm_max} using the contraction property of $\mathrm{pr}_D$.

\begin{ext}\label{Ext1}
Proposition~\ref{prop0}, Theorem~\ref{thm1}, Corollary~\ref{Cor1}, Theorem~\ref{thm_max},  Corollary~\ref{Cor2} and the statement on the system~\eqref{dynsys2} in Remark~\ref{rr11} remain valid for the system~\eqref{projdyn} in place of~\eqref{dynsys2} if $\R^d$ is replaced by $D$  in Assumption A.1.
\end{ext}

Analogously, we  extend Theorem~\ref{thm_Gilesnew} in Section~\ref{sec3} on the multilevel Robbins-Monro approximation   to the case where the mappings $F, F_1, F_2,\dots $ are defined on $D\times\mathcal U$ with $D$ being a closed and convex subset of $\R^d$ and 
\[
f\colon D\to \R^d,\quad \theta \mapsto \E[F(\theta,U)]
\]
has a unique zero $\theta^*\in D$. In this case  we proceed analogously to Extension~\ref{Ext1} and  employ the  projected multilevel Robbins-Monro scheme
\begin{equation}\label{rmproj}
\theta_n = \pr_D\bigl(\theta_{n-1} + \gamma_n\, Z_n(\theta_{n-1})\bigr) 
\end{equation}
with $\theta_0\in D$ and $Z_n$ given by~\eqref{mul1}, in place of the multilevel scheme~\eqref{rm10}. 

Note  that if $\pr_D$ can be evaluated on $\R^d$ with constant cost then, up to a constant depending  on $D$ only, the computational cost of the projected approximation $\theta_n$  is still bounded by the quantity $\cost_n$ given by~\eqref{uu2} since the computation of $\theta_n$ requires $n$ evaluations of $\pr_D$ and $\cost_n \ge C_1 n$.

Employing Proposition~\ref{prop1}  as well as Extension~\ref{Ext1} one easily gets  the following result.

\begin{ext}\label{Ext3}
Theorem~\ref{thm_Gilesnew} remains valid for the scheme~\eqref{rmproj} in place of~\eqref{rm10} if  $\R^d$ is replaced by $D$ in Assumptions A.1, C.1 and C.2.
\end{ext}

Next we consider the Polyak-Ruppert scheme. In this case we additionally suppose that $D$ contains an open ball  $B(\theta^*,\delta)=\{\theta\in\R^d\colon\|\theta-\theta^*\|<\delta\}$ around the unique zero $\theta^*\in D$ and we \black extend the function $f$ on $\R^d$: for $c\in (0,\infty)$ define
 \begin{equation}\label{continue}
  f_{c}\colon \R^d \to \R^d, \quad x\mapsto  - c (x-\mathrm{pr}_D (x)) + f(\mathrm{pr}_D(x)).
\end{equation}
The following lemma shows that property B.1 is preserved for appropriately chosen $c>0$.  

\begin{lemma}\label{l234} Let $\delta>0$ and suppose that $B(\theta^*,\delta) \subset D$
 and that $f\colon D\to \R^d$ satisfies B.1(i) to B.1(iii) on $D$. Take $L,L',L'',\lambda,H$ according to B.1,  let $ c\in (1/(2L'),\infty) $  and  put  
 \[
r_c = 1-\tfrac{L}{c} \bigl(2 - \tfrac{1}{ cL'}) \in [0,1).
\]
Then 
$f_c$ satisfies B.1(i) to B.1(iii) on $\R^d$ with  
\begin{equation}\label{newpar}
 L_c = c\bigl(1-\sqrt{r_c}\bigr),\,\,  L_c' =  \frac{ L_c}{\frac2{(L')^2}+2c^2},\,\, L_c''= \bigl(c+\tfrac{1}{L'}\bigr)\tfrac{1}{\delta^\lambda} + L''
\end{equation}
in place of $L$, $L'$ and $L''$, respectively.
\end{lemma}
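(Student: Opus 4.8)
The plan is to write $y=\mathrm{pr}_D(x)$, $u=x-y$ and $w=y-\theta^*$, so that $x-\theta^*=u+w$ and, by definition~\eqref{continue}, $f_c(x)=-c\,u+f(y)$. Two elementary facts about the projection onto the closed convex set $D$ drive everything. First, since $\theta^*\in D$, the variational characterization of $\mathrm{pr}_D$ gives $\langle u,\theta^*-y\rangle\le 0$, hence $\langle u,w\rangle\ge 0$; expanding $\|u+w\|^2=\|u\|^2+2\langle u,w\rangle+\|w\|^2$ then yields $\|u\|\le\|x-\theta^*\|$ and $\|w\|\le\|x-\theta^*\|$. Second, $\mathrm{pr}_D(\theta^*)=\theta^*$, and $y\in D$ so that B.1(i)--(iii) are available at $y$.

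The crucial observation is the identity
\[
x-\theta^*+\tfrac1c f_c(x)=y-\theta^*+\tfrac1c f(y),
\]
obtained by substituting $f_c(x)=-c(x-y)+f(y)$. Because $c>1/(2L')$, the step-size $\gamma=1/c$ satisfies $\gamma\le 2L'$, so applying \eqref{aaa1} in Remark~\ref{r001} with $c_1=L$, $c_2=L'$ gives $\|y-\theta^*+\tfrac1c f(y)\|\le\sqrt{r_c}\,\|w\|$, where $r_c$ is exactly the contraction factor of~$(\ast)$. Combining with $\|w\|\le\|x-\theta^*\|$ I would obtain the contraction on all of $\R^d$,
\[
\bigl\|x-\theta^*+\tfrac1c f_c(x)\bigr\|\le\sqrt{r_c}\,\|x-\theta^*\|,
\]
which reduces the contraction for $f_c$ to the one already available for $f$ on $D$. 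That $r_c\in[0,1)$ is consistent here follows since B.1(i),(ii) force $LL'\le1$ (by Cauchy--Schwarz together with the linear growth \eqref{aaa0}), which makes the bracketed quantity in $r_c$ lie in $[0,1]$.

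From the contraction, B.1(i) for $f_c$ is immediate: applying Cauchy--Schwarz to $\langle x-\theta^*,\,x-\theta^*+\tfrac1c f_c(x)\rangle$ gives $\|x-\theta^*\|^2+\tfrac1c\langle x-\theta^*,f_c(x)\rangle\le\sqrt{r_c}\,\|x-\theta^*\|^2$, i.e.\ $\langle x-\theta^*,f_c(x)\rangle\le-L_c\|x-\theta^*\|^2$ with $L_c=c(1-\sqrt{r_c})$. For B.1(ii) I would first establish the linear growth $\|f_c(x)\|\le(c+1/L')\|x-\theta^*\|$, using $\|u\|\le\|x-\theta^*\|$ and $\|f(y)\|\le\tfrac1{L'}\|w\|\le\tfrac1{L'}\|x-\theta^*\|$ from B.1(ii) on $D$ via \eqref{aaa0}; feeding this into B.1(i) for $f_c$ and invoking $(c+1/L')^2\le 2c^2+2/(L')^2$ yields $\langle x-\theta^*,f_c(x)\rangle\le-L_c'\|f_c(x)\|^2$ with the stated $L_c'$.

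The main work—and the only place the open ball $B(\theta^*,\delta)\subset D$ enters—is B.1(iii). I would split
\[
f_c(x)-H(x-\theta^*)=-(c\,I_d+H)\,u+\bigl(f(y)-H(y-\theta^*)\bigr).
\]
The second term is at most $L''\|w\|^{1+\lambda}\le L''\|x-\theta^*\|^{1+\lambda}$ by B.1(iii) on $D$, and the first is bounded by $(c+\|H\|)\|u\|\le(c+1/L')\|u\|$, where $\|H\|\le1/L'$ as in \eqref{aaa3} of Remark~\ref{r003}. The delicate point is to convert $\|u\|$ into a $(1+\lambda)$-power: if $x\in D$ then $u=0$, while if $x\notin D$ then $x\notin B(\theta^*,\delta)$, so $\|x-\theta^*\|\ge\delta$ and hence $\|u\|\le\|x-\theta^*\|=\|x-\theta^*\|^{1+\lambda}/\|x-\theta^*\|^{\lambda}\le\delta^{-\lambda}\|x-\theta^*\|^{1+\lambda}$. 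Assembling both bounds produces exactly $L_c''=(c+1/L')\delta^{-\lambda}+L''$. I expect this geometric estimate $\|u\|\le\delta^{-\lambda}\|x-\theta^*\|^{1+\lambda}$, together with keeping track of the constants, to be the step requiring the most care.
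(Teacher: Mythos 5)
Your proof is correct and takes essentially the same route as the paper's: B.1(i) via the contraction \eqref{aaa1} applied at $\mathrm{pr}_D(x)$ with step $\gamma=1/c$ combined with the contractivity of the projection, and B.1(iii) via the identical splitting $-(c\,I_d+H)u+\bigl(f(y)-H(y-\theta^*)\bigr)$ with $\|H\|\le 1/L'$ from \eqref{aaa3} and the ball $B(\theta^*,\delta)$ used to upgrade the linear bound on $\|u\|$ to a $(1+\lambda)$-power. The only (harmless) deviation is in B.1(ii), where the paper directly verifies $\|f_c(\theta)\|^2+\tfrac{1}{L_c'}\langle\theta-\theta^*,f_c(\theta)\rangle\le 0$ using $\|a+b\|^2\le 2\|a\|^2+2\|b\|^2$, whereas you first prove the linear growth $\|f_c(x)\|\le(c+1/L')\|x-\theta^*\|$ and then invoke the implication (i) and (ii') $\Rightarrow$ (ii) from Remark~\ref{r001}; since $(c+1/L')^2\le 2c^2+2/(L')^2$, this recovers the same constant $L_c'$.
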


\begin{proof}
 Using~\eqref{aaa1} with $c_1 = L$, $c_2= L'$ and $\gamma= 1/c$ it follows that $r_c\in [0, 1)$.
By~\eqref{aaa1} and the contractivity of the  projection $\mathrm{pr}_D$ we have for any $\theta\in\R^d$ that
\begin{align*}
\langle \theta-\theta^*, f_{c}(\theta)\rangle & =  \langle\theta-\theta^*, - c \,(\theta- \mathrm{pr}_D(\theta)) + f(\mathrm{pr}_D(\theta))\rangle\\
& = - c \,\|\theta-\theta^*\|^2 + \langle \theta-\theta^*, - c \,( \theta^* - \mathrm{pr}_D(\theta)) + f(\mathrm{pr}_D(\theta)) \rangle\\
& \le  - c \,\|\theta-\theta^*\|^2 +  c \,\|\theta-\theta^*\|\,\|\mathrm{pr}_D(\theta)-\theta^*  + \tfrac{1}{c}f(\mathrm{pr}_D(\theta))\|\\
& \le  - c \,\|\theta-\theta^*\|^2 +  c\, \sqrt{r_c}\,\|\theta-\theta^*\|\, \|\mathrm{pr}_D(\theta)-\theta^*\|\\
& \le -c\,(1-\sqrt r_c)\, \|\theta-\theta^*\|^2,
\end{align*}
which shows that $ f_c$ satisfies B.1(i) on $\R^d$ with $ L_c$ in place of $L$.  

Using the latter estimate, \eqref{aaa0} with $c_2'=1/L'$ and the Lipschitz continuity  of $\mathrm{pr}_D$  we get for any $\theta\in\R^d$ that
\begin{align*}
\|f_{c}(\theta)\|^2 + \tfrac{1}{ L_c'} \langle \theta-\theta^*,f_{c}(\theta)\rangle & \le  \|-c\, (\theta- \mathrm{pr}_D(\theta)) + f(\mathrm{pr}_D(\theta))\|^2 - \tfrac{ L_c}{ L_c'} \|\theta-\theta^*\|^2\\
& \le 2 c^2 \|\theta- \mathrm{pr}_D(\theta)\|^2 + 2 \|f(\mathrm{pr}_D(\theta))\|^2- \tfrac{ L_c}{\bar L_c'} \|\theta-\theta^*\|^2\\
& \le 2  c^2 \|\theta- \mathrm{pr}_D(\theta)\|^2 + \tfrac{2}{(L')^2}\|\mathrm{pr}_D(\theta)-\theta^*\|^2- \tfrac{ L_c}{ L_c'} \|\theta-\theta^*\|^2\\
& \le (2 c^2 + \tfrac{2}{(L')^2} -\tfrac{ L_c}{ L_c'} ) \|\theta- \theta^*\|^2 = 0,
\end{align*}
which shows that $ f_c$ satisfies B.1(ii) on $\R^d$ with $ L_c'$ in place of $L'$.

Finally, let $\theta\in \R^d\setminus D$, which implies that $\|\theta-\theta^*\|\ge \delta$. Using the latter fact and the projection property and the contractivity of $\mathrm{pr}_D$ we get
\begin{align*}
\| f_c(\theta) - H(\theta-\theta^*)\| & = \|-c(\theta-\mathrm{pr}_D(\theta)) +f(\mathrm{pr}_D(\theta)) - H(\mathrm{pr}_D(\theta)-\theta^*) - H(\theta-\mathrm{pr}_D(\theta))\|\\
& \le \| (c\,I_d+H)(\theta-\mathrm{pr}_D(\theta))\| + \|f(\mathrm{pr}_D(\theta)) - H(\mathrm{pr}_D(\theta)-\theta^*)\|\\
& \le \|c\,I_d+H\|\|\theta-\mathrm{pr}_D(\theta)\| + L''\,\|\mathrm{pr}_D(\theta)-\theta^*\|^{1+\lambda}\\& \le (c+\|H\|)\|\theta-\theta^*\|+ L''\,\|\theta-\theta^*\|^{1+\lambda}\\&
\le \bigl((c+\|H\|)\tfrac{1}{\delta^\lambda} + L''\bigr) \|\theta-\theta^*\|^{1+\lambda}.
\end{align*}
Observing that $\|H\|\le 1/L'$, see~\eqref{aaa3}, completes the proof of the lemma.
\end{proof}

Replacing $f$ by $ f_c$ in~\eqref{dynsys2} we obtain the dynamical system 
\begin{equation}\label{moddynsys}
\theta_{c,n}=\theta_{c,n-1}+\gamma_n \bigl(  f_c(\theta_{c,n-1})+ \eps_n\, R_n +\sig_n\, D_{n}\bigr),
\end{equation}
for $n\in\N$, where $\theta_{c,0}\in D$ is a deterministic starting value in $D$.

Employing Lemma~\ref{l234} we immediately arrive at the following fact.

\begin{ext}\label{Ext2}
Assume that  $B(\theta^*,\delta)\subset D$ for some $\delta\in (0,\infty)$. Then  Corollary~\ref{Cor3} remains valid for the modified Polyak-Ruppert algorithm 
\begin{equation}\label{modpolrup}
\bar \theta_{c,n} = \frac{1}{\bar b_n} \sum_{k=1}^{n}b_k \theta_{c,k},\qquad n\in\N,
\end{equation}
in place of the scheme~\eqref{average}, if $\R^d$ is replaced by $D$  in Assumption B.1   and $ c\in (1/(2L'),\infty)$ with $L'$ according to B.1(ii). 

Moreover, Theorem~\ref{thm2} remains valid for the scheme \eqref{modpolrup} as well if, additionally, Assumption B.3 is satisfied with $ L_c$ given by~\eqref{newpar} in place of $L$.
\end{ext}

Similar to Extension~\ref{Ext2} we can  extend Theorem~\ref{thm_Gilesnew_2}  on the multilevel Polyak-Ruppert averaging. To this end we define for $c\in(0,\infty)$ extensions $F_c,F_{c,1},F_{c,2},\dots\colon \R^d\times\mathcal U \to \R^d$ of the mappings $F,F_1,F_2\dots \colon D\times \mathcal U\to \R^d$  by taking
\[
G_c\colon \R^d\times \mathcal U\to \R^d,\quad (\theta,u)\mapsto -c(\theta-\pr_D(\theta)) + G(\pr_D(\theta),u)
\] 
for $G\in\{F,F_1,F_2,\dots\}$. Note that $\E[F_c(\theta,U)] = f_c(\theta)$ and $f(\theta^*)=f_c(\theta^*)=0$ with $f_c$ given by~\eqref{continue}. 

Clearly, if the mappings $F,F_1,F_2,\dots$ satisfy  C.1(i),(ii) on $D$ then the mappings $F_c,F_{c,1},F_{c,2},\dots$ satisfy C.1(i),(ii) on $\R^d$ with $\Gamma_1\circ \pr_D,\Gamma_2\circ \pr_D$ in place of $\Gamma_1,\Gamma_2$.
Furthermore, if $\Gamma_1,\Gamma_2$ satisfy Assumption C.2 on $D$ then $\Gamma_1\circ \pr_D,\Gamma_2\circ \pr_D$ satisfy Assumption C.2 on $\R^d$, since we have $\|\pr_D(\theta)\|\le  \|\theta\| +\|\pr_D(0)\|$ for every $\theta\in\R^d$. 

We thus take
\begin{align*}
Z_{c,n}(\theta) & =\sum_{k=1}^{m_n(\pr_D(\theta))} \frac{1}{N_{n,k}(\pr_D(\theta))} \sum_{\ell=1}^{N_{n,k}(\pr_D(\theta))} \bigl(F_{c,k}(\theta, U_{n,k,\ell})- F_{c,k-1}(\theta,U_{n,k,\ell})\bigr)\\
 & =  -c(\theta -\pr_D(\theta)) + Z_n(\pr_D(\theta)), 
\end{align*}
with $m_n$, $N_{n,k}$ and $Z_n$  given by~\eqref{maxlevel},\eqref{itnum} and ~\eqref{mul1},  respectively, as a multilevel approximation of $f_c(\theta)$ in
the $n$-th Robbins-Monro step, and we use the multilevel scheme 
\begin{equation}\label{rm10x}
\theta_{c,n} =\theta_{c,n-1}+\gamma_n\, Z_{c,n}(\theta_{c,n-1})
\end{equation}
for Polyak-Ruppert averaging.

Employing Lemma~\ref{l234} we  get the following result.

\begin{ext}\label{Ext4}
Assume that  $B(\theta^*,\delta)\subset D$ for some  $\delta\in (0,\infty)$. Then  Theorem~\ref{thm_Gilesnew_2} remains valid for the modified Polyak-Ruppert algorithm 
\begin{equation}\label{modpolrup2}
\bar \theta_{c,n} = \frac{1}{\bar b_n} \sum_{k=1}^{n}b_k \theta_{c,k},\qquad n\in\N,
\end{equation}
with $(\theta_{c,n})_{n\in\N}$ given by~\eqref{rm10x} in place of the scheme~\eqref{average},
if $\R^d$ is replaced by $D$ in Assumptions B.1, C.1, C.2 and $ c\in (1/(2L'),\infty)$ with $L'$ according to B.1(ii). 
\end{ext}

\section{Numerical Experiments}\label{sec5}
We illustrate the application of our multilevel methods in  the simple case of computing the volatility in a Black Scholes model based on the price of a European call.

Fix $T,\mu,s_0,K\in (0,\infty)$ and let $W$ denote a one-dimensional Brownian motion on $[0,T]$. For every $\theta\in (0,\infty)$ we use $S^\theta$ to denote the geometric Brownian motion on $[0,T]$ with initial value $s_0$, trend $\mu$ and volatility $\theta$, i.e.
\begin{equation}\label{bs}
\begin{aligned}
S^\theta_0 & = s_0,\\
dS^\theta_t & = \mu S^\theta_t\, dt + \theta S^\theta_t\, dW_t,\quad t\in[0,T].
\end{aligned}
\end{equation}
In a Black Scholes model with fixed interest rate $\mu$ the fair price of a  European call with maturity $T$, strike $K$ and underlying geometric Brownian motion with volatility $\theta$ is  given by 
\[
p(\theta) = \E[C(\theta,W)], 
\]
where 
\[
C(\theta,W) = \exp(-\mu T)(S_T^\theta - K)_+,
\]
and  according to the Black-Scholes formula $p$  satisfies
\[
p(\theta) =  s_0\,\Phi\Bigl(\tfrac{\ln (s_0/K) + (\mu+\theta^2/2)T}{\theta\sqrt {T}}\Bigr) - \exp(-\mu T)K \Phi\Bigl(\tfrac{\ln (s_0/K) + (\mu-\theta^2/2)T}{\theta\sqrt {T}}\Bigr),
\]
where $\Phi$ denotes the standard normal distribution function.

Fix $\vartheta_1 < \vartheta_2$ as well as $\theta^*\in [\vartheta_1,\vartheta_2]$. Our computational goal is to approximate $\theta^*$ based on the knowledge of $\vartheta_1,\vartheta_2$ and the value of the price $p(\theta^*)$. 

Within the framework of sections~\ref{sec3} and~\ref{sec4} we  take $d=1$, $D=[\vartheta_0,\vartheta_1]$, $U=W$ and
\[
F(\theta,W) = p(\theta^*) - C(\theta,W),\quad \theta\in D.
\] 
Moreover, we approximate $F(\theta,W)$ by employing equidistant Milstein schemes: for $M,k\in\N$ with $M\ge 2$ and $\theta\in D$ we define
\[
F_{M,k}(\theta,W) = p(\theta^*) - \exp(-\mu T)(\widehat S^\theta_{M^k,T}-K)_+,
\]
where $\widehat S^\theta_{M^k,T}$ denotes the Milstein approximation of $S^\theta_T$ based on $M^k$ equidistant steps, i.e.
\[
\widehat S^\theta_{M^k,T} = s_0\prod_{\ell=1}^{M^k} \Bigl(1+\mu \tfrac{ T}{M^k} + \theta \Delta_\ell W + \tfrac{\theta^2}{2}\bigl((\Delta_\ell W)^2- \tfrac{T}{M^k}\bigr)\Bigr) 
 \] 
with $\Delta_\ell W = W(\ell T/M^k) - W((\ell-1)T/M^k)$.

We briefly check the validity of Assumptions B.1, C.1 and C.2. Clearly, the mapping $f = \E[F(\cdot,W)]\colon D\to \R$ satisfies
\[
f(\theta) = p(\theta^*) - p(\theta),\quad \theta\in D.
\]
Note that $p$ is two times differentiable with respect to $\theta$ on $(0,\infty)$ with
\begin{equation}\label{num1}
\begin{aligned}
\tfrac{\partial p}{\partial \theta}(\theta) & = s_0 \sqrt{T} \varphi\Bigl(\tfrac{\ln (s_0/K) + (\mu+\theta^2/2)T}{\theta\sqrt {T}}\Bigr),\\
\tfrac{\partial^2 p}{\partial \theta^2}(\theta) & = \tfrac{(\ln (s_0/K) + (\mu+\theta^2/2)T)(\ln (s_0/K) + (\mu-\theta^2/2)T)}{\theta^3 T} \tfrac{\partial p}{\partial \theta}(\theta),
\end{aligned}
\end{equation}
where $\varphi$ denotes the density of  the standard normal distribution.
Let $g(\theta) = \frac{\ln(s_0/K) + (\mu +\theta^2/2)T}{\theta\sqrt{T}}$ and put $u= 2(\ln(s_0/K)+ \mu T)/T$ as well as
\[
z^* =  s_0\sqrt{T} \begin{cases} \min\bigl(\varphi(g(\vartheta_1)),\varphi(g(\vartheta_2))\bigr), & \text{if }u \not\in (\vartheta_1^2,\vartheta_2^2),\\
\min\bigl(\varphi(\max(g(\vartheta_1),g(\vartheta_2))),\varphi(\sqrt{u})\bigr) & \text{if }u \in (\vartheta_1^2,\vartheta_2^2).
\end{cases}
\]
Using~\eqref{num1} it is then straightforward to verify 
that $f$ satisfies Assumption B.1 on $D$ with parameters 
\begin{equation}\label{num2}
L= s_0 \sqrt{T}\min_{\theta\in[\vartheta_1,\vartheta_2]}\varphi( g(\theta)) =  z^*
\end{equation}
and
\begin{equation}\label{num2a}
 L' = \frac{\sqrt{2\pi}}{s_0\sqrt{T}},\quad H=-\frac{\partial p}{\partial \theta}(\theta^*),\quad L'' = \max_{\theta\in[\vartheta_1,\vartheta_2]}\Bigl|\frac{\partial^2 p}{\partial \theta^2}(\theta)\Bigr|,\quad \lambda = 1.
\end{equation}
 As is well known there exists a constant $c(T,\mu,\vartheta_1,\vartheta_2)\in (0,\infty)$, which depends only on $T,\mu,\vartheta_1,\vartheta_2$, such that
 \[
\sup_{\theta\in D} \E[|\widehat S^\theta_{M^k,T}- S^\theta_T|^2]^{1/2} \le c(T,\mu,\vartheta_1,\vartheta_2) M^{-k}
 \]
Since $|F_{M,k}(\theta,W)-F(\theta,W)|\le |\widehat S^\theta_{M^k,T}- S^\theta_T|$ we conclude that Assumption C.1 is satisfied on $D$ with parameters
 \[
 \alpha = \beta = 1,\quad \Gamma_1 = \Gamma_2 = c(T,\mu,\vartheta_1,\vartheta_2,M)
 \]
for some constant $c(T,\mu,\vartheta_1,\vartheta_2,M)\in (1,\infty)$, which depends only on $T,\mu,\vartheta_1,\vartheta_2,M$. Consequently, Assumption C.2 is satisfied on $D$ as well.
 
First, we consider the projected multilevel Robbins-Monro scheme \eqref{rmproj} with
step-size $\gamma_n$, noise-level $\sigma_n$ and bias-level $\eps_n$  given by
\[
\gamma_n = \tfrac{2}{L n},\quad \sigma_n^2 = \tfrac{\sqrt{c(T,\mu,\vartheta_1,\vartheta_2,M)}}{n^3},\quad \eps_n = \tfrac{c(T,\mu,\vartheta_1,\vartheta_2,M)}{n^2}.
\]  
Note that the constant $c(T,\mu,\vartheta_1,\vartheta_2,M)$ does not need to be known in order to implement the scheme. We have  
\[
\theta_n = \proj_{[\vartheta_1,\vartheta_2]}\bigl(\theta_{n-1} + \tfrac{2}{L n} Z_n(\theta_{n-1})\bigr),
\]
where $\theta_0 \in [\vartheta_1,\vartheta_2]$ and for all $\theta\in D$
\begin{equation}\label{zz}
Z_n(\theta) = \sum_{k=1}^{1\vee \lceil 2 \log_M (n)\rceil}\frac{1}{\lceil n^3 M^{-3k/2}\rceil}
\sum_{\ell=1}^{\lceil n^3 M^{-3k/2}\rceil} \bigl(F_{M,k}(\theta,W_{n,k,\ell}) - F_{M,k-1}(\theta,W_{n,k,\ell})\bigr)
\end{equation}
with independent copies $W_{n,k,\ell}$ of $W$. Then by Extension~\ref{Ext3} of Theorem~\ref{thm_Gilesnew} there exists $\kappa\in (0,\infty)$ such that for every $n\in\N$,
\begin{equation}\label{theo}
\E [\|\theta_n-\theta^*|^2]^{1/2} \le \kappa\, n^{-2},\quad \cost_n \le \kappa\, n^4.
\end{equation}

In the following we use the model parameters 
\begin{equation}\label{setting}
s_0 = 10, T = 2,\, \mu= 0.01 ,\, K = 11 ,\, \vartheta_1 = 0.05 ,\,  \vartheta_2 = 0.5,\, \theta^* = 0.2
\end{equation}
and we choose 
\begin{equation}\label{setting2}
M = 4,\, \theta_0 = 0.1
\end{equation}
in the definition of the Robbins-Monro scheme. 

Figure~\ref{fig1} shows  a log-log plot of  a simulation  of the first $500$ steps of the error process $(|\theta_n - \theta^*|)_{n\in\N_0}$. 
\begin{figure}[!h]
\centering
\includegraphics[scale = .6]{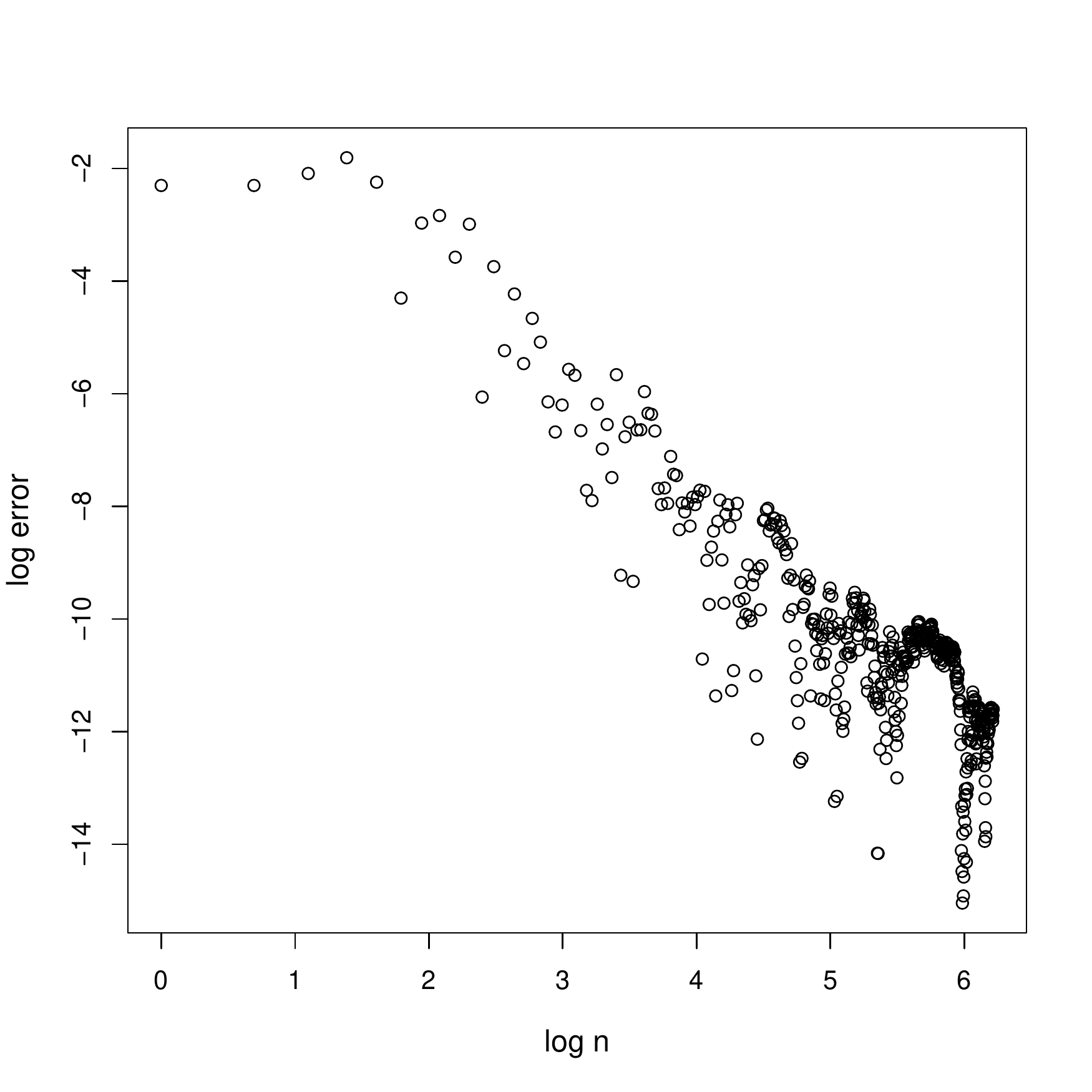}
\caption{Multilevel Robbins-Monro: error trajectory for $n=1,\dots,500$}\label{fig1}
\end{figure}

Figure~\ref{fig1a} 
shows the log-log plot of Monte Carlo estimates of the root mean squared error of $\theta_{n}$ and the corresponding average computational times for $n=1,\dots,100$ based on $N= 200 $ replications. 
Both plots are in accordance with the theoretical bounds in~\eqref{theo}. 
\begin{figure}[!h]
\centering
\includegraphics[scale = .45]{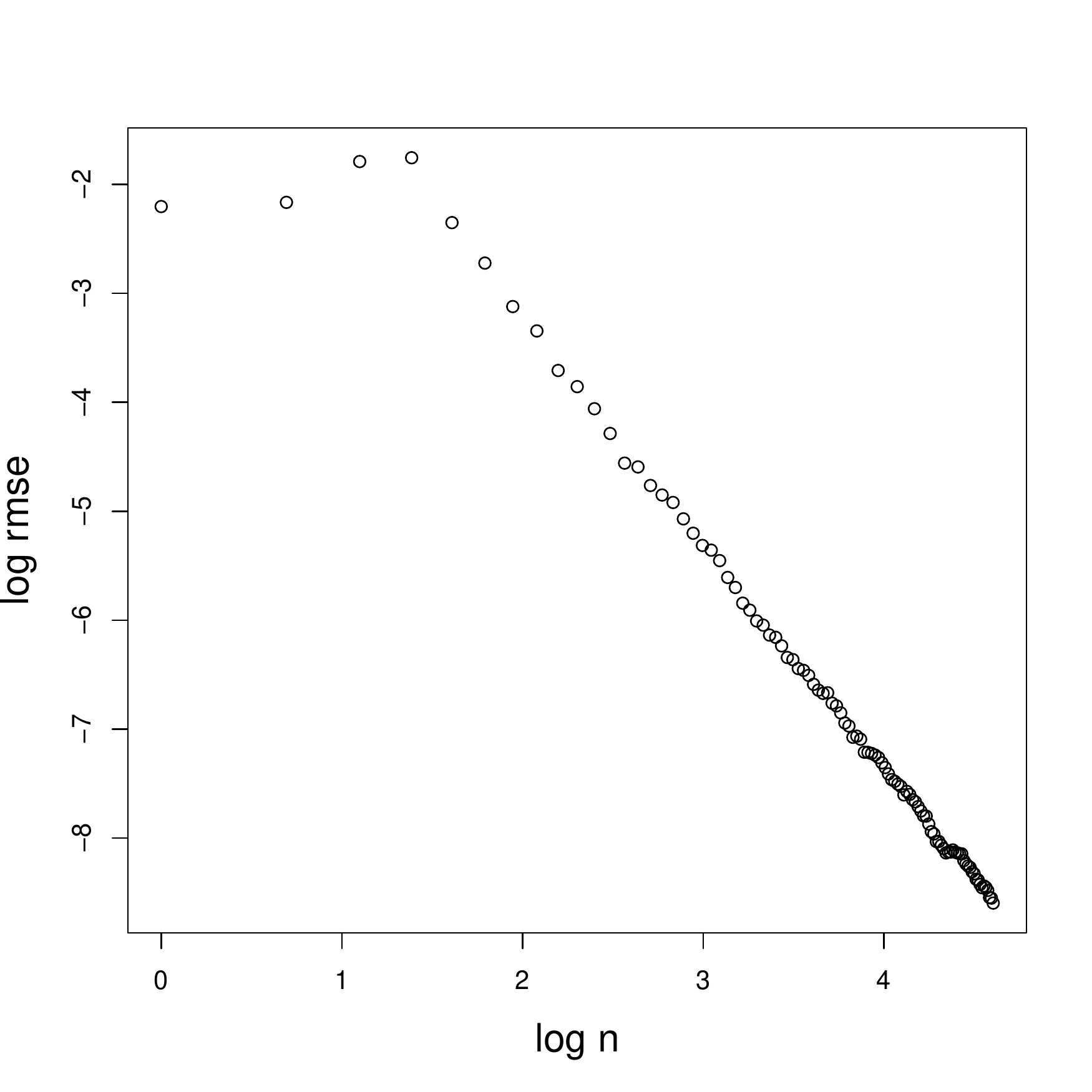}\hspace*{-.1cm}
\includegraphics[scale = .45]{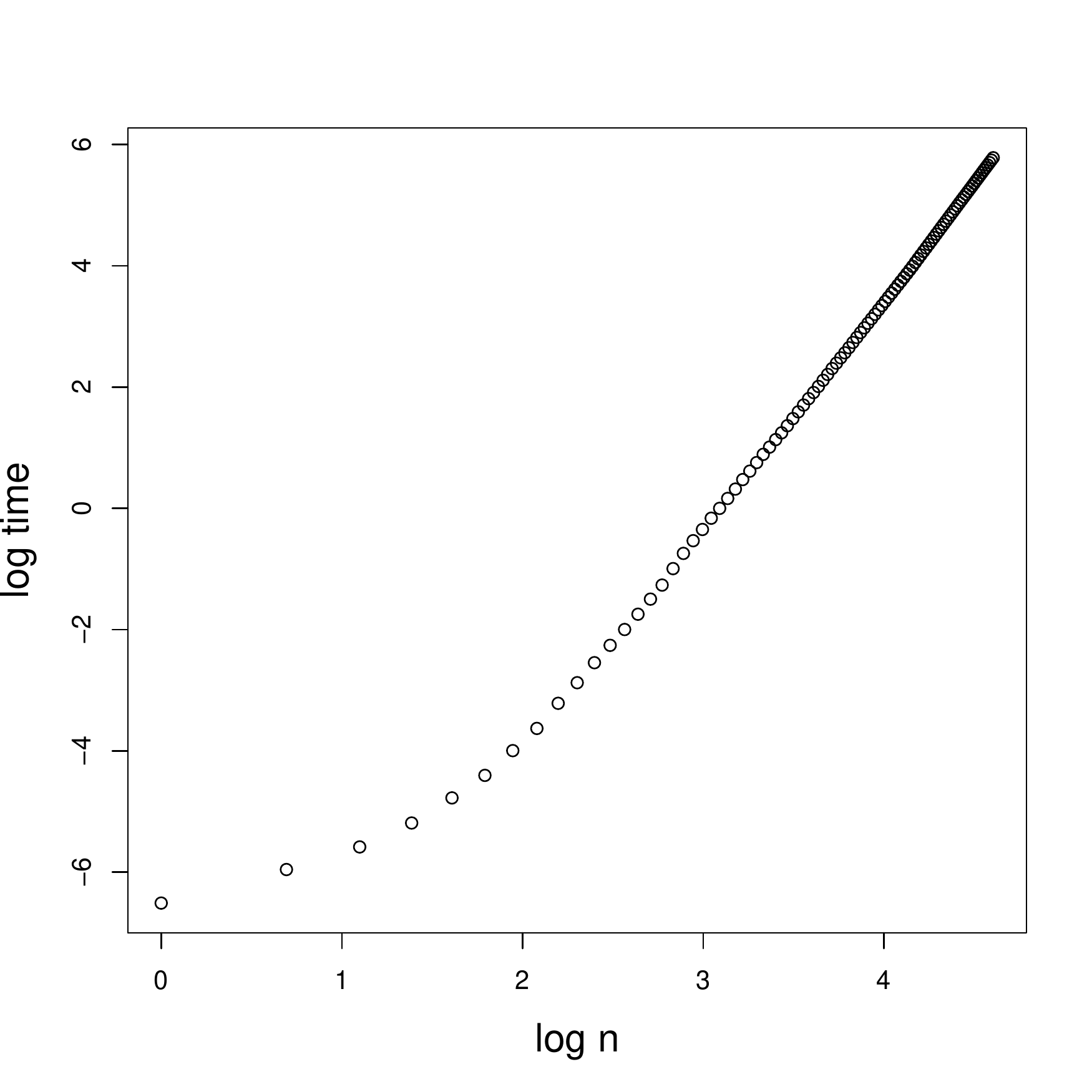}
\caption{Multilevel Robbins-Monro: estimated root mean squared error and average computational time for $n=1,\dots,500$.}\label{fig1a}
\end{figure}

Next, we consider the multilevel Polyak-Rupert averaging~\eqref{modpolrup2} with step-size $\gamma_n$, noise-level $\sigma_n$, bias-level $\eps_n$, weight $b_n$ and extension parameter $c$  given by
\[
\gamma_n = \tfrac{1}{n^{ 0.9}},\quad \sigma_n^2 = \tfrac{\sqrt{c(T,\mu,\vartheta_1,\vartheta_2,M)}}{n^3},\quad \eps_n = \tfrac{c(T,\mu,\vartheta_1,\vartheta_2,M)}{n^2},\quad b_n = n^2,\quad c=\frac{1}{L'}.
\] 
Thus 
\[
\bar \theta_{c,n} = \frac{6}{n(n+1)(2n+1)}\sum_{k=1}^n k^2\, \theta_{k,c},
\]
where
\[
\theta_{n,c} = \theta_{n-1,c} + \frac{1}{n^{ 0.9} } \Bigl(-\frac{1}{L'}(\theta_{c,n-1}- \proj_{[\vartheta_1,\vartheta_2]}(\theta_{c,n-1})) + Z_n(\proj_{[\vartheta_1,\vartheta_2]}(\theta_{c,n-1}))\Bigr), 
\]
with $Z_n$ given by~\eqref{zz} and a deterministic $\theta_{0,c} \in [\vartheta_1,\vartheta_2]$. 
Then by Extension~\ref{Ext4} of Theorem~\ref{thm_Gilesnew_2} there exists for every $q\in [1,2)$ a constant $\kappa\in (0,\infty)$ such that for every $n\in\N$,
\begin{equation}\label{theo2}
\E [\|\bar \theta_{c,n}-\theta^*|^q]^{1/q} \le \kappa n^{-2},\quad \cost_n \le \kappa n^4.
\end{equation} 

We choose the parameters $s_0, T,\mu, K, \vartheta_1,\vartheta_2, M, \theta_{c,0} = \theta_0$ as in~\eqref{setting} and~\eqref{setting2}. Figure~\ref{fig2} shows the log-log plot of a trajectory of the  error process $(|\bar \theta_{c,n} - \theta^*|)_{n\in\N_0}$ until $n=500$. 
\begin{figure}[!h]
\centering
\includegraphics[scale = .6]{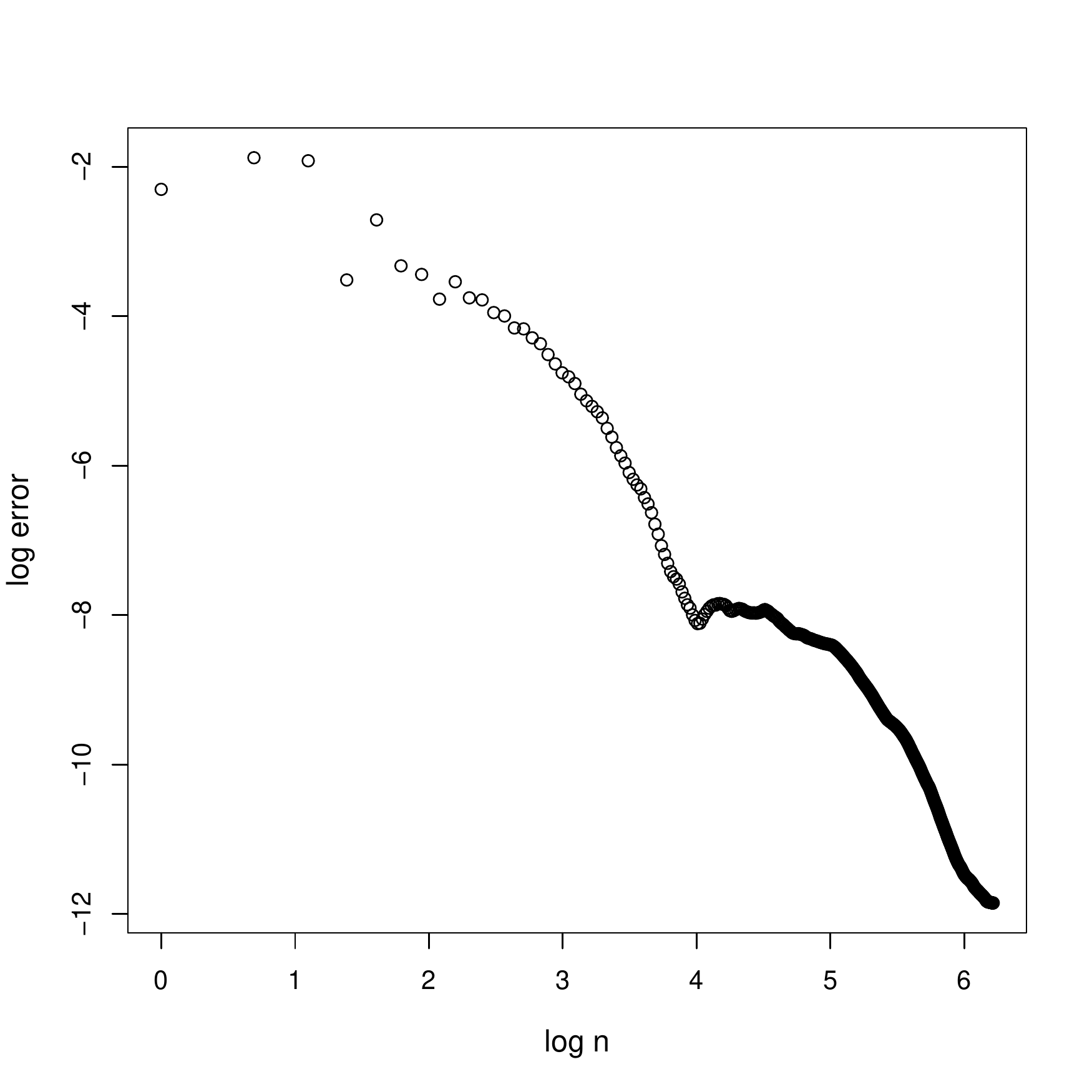}%\vspace*{-.5cm}
\caption{Multilevel Polyak-Ruppert: error trajectory for $n=1,\dots,500$}\label{fig2}
\end{figure}
Figure~\ref{fig2a} 
shows the log-log plot of Monte Carlo estimates of the root mean squared error of $\bar \theta_{c,n}$ and the corresponding average computational times for $n=1,\dots,100$ based on $N= 200 $ replications. \black
As for the multilevel Robbins-Monro scheme, both plots are in accordance with the theoretical bounds in~\eqref{theo2}. 
\begin{figure}[!h]
\centering
%\hspace*{-.5cm}
\includegraphics[scale = .45]{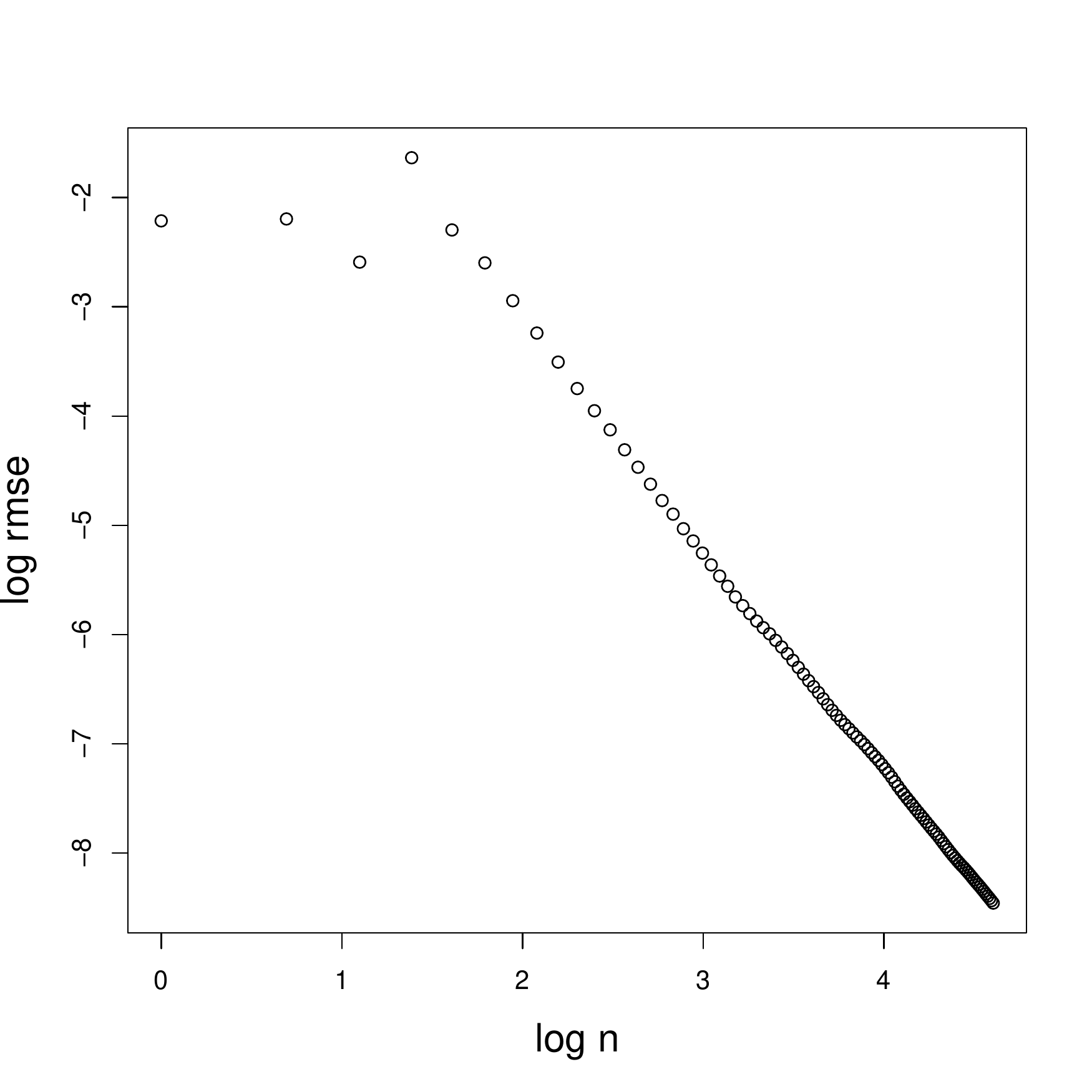}\hspace*{-.1cm}
%\caption{Multilevel Robbins-Monro: estimated root mean squared error for $n=1,\dots,500$}%\label{fig1a}
\includegraphics[scale = .45]{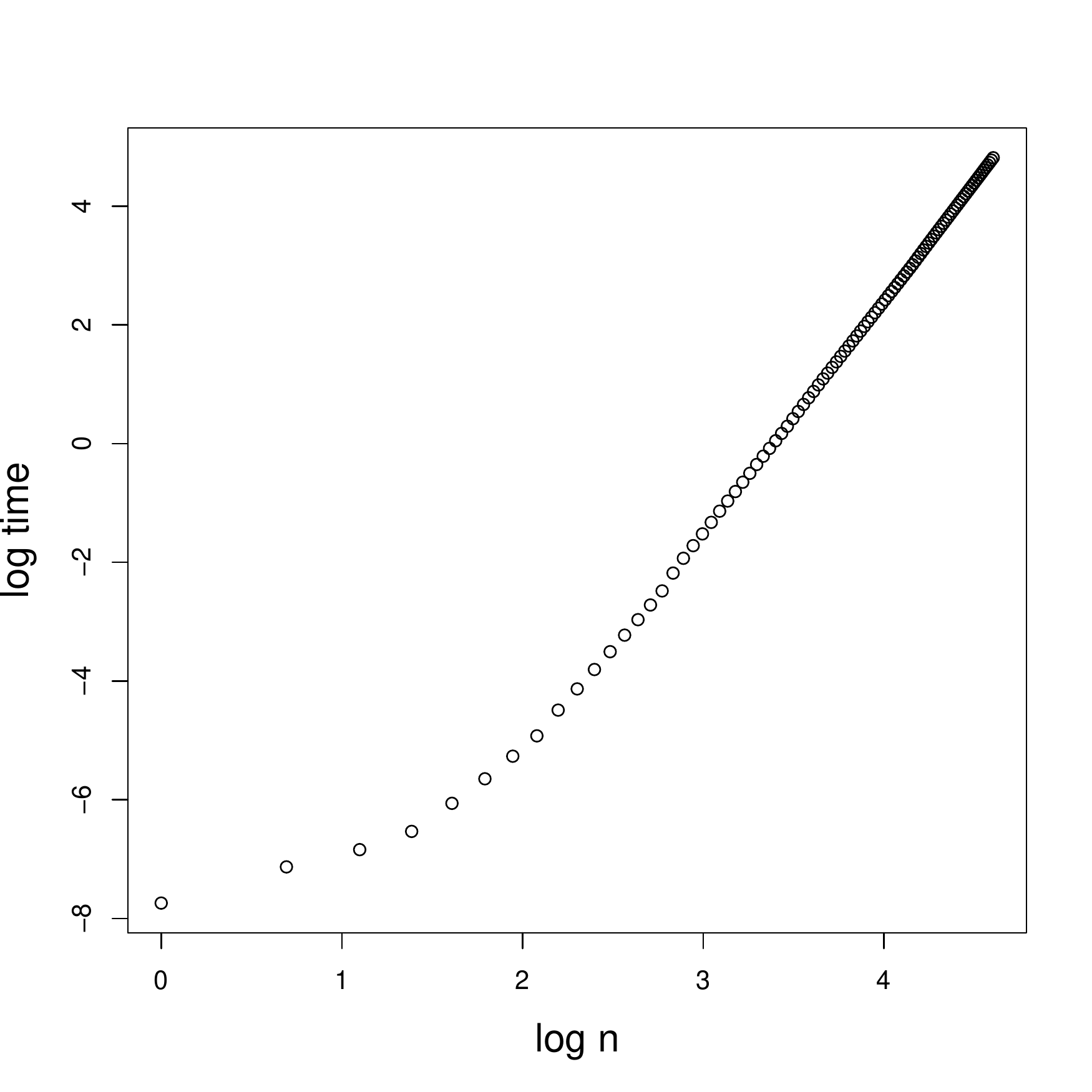}
\caption{Multilevel Polyak Ruppert: estimated root mean squared error and average computational time for $n=1,\dots,500$.}\label{fig2a}
\end{figure}

\black

\newpage

\section*{Appendix}

Let $(\Omega,\cF,\P)$ be a probability space endowed with a filtration $(\cF_n)_{n\in\N_0}$ and let $\|\cdot\|$ denote a Hilbert space norm on $\R^d$. \smallskip

In this section we provide $p$-th mean estimates for an adapted $d$-dimensional dynamical system $(\zeta_n)_{n\in\N_0}$  with the property that for each $n\in\N$, $\zeta_n$ is a zero-mean perturbation of a previsible proposal $\xi_n$ being comparable in size to $\zeta_{n-1}$. More formally, we assume that there exist a previsible $d$-dimensional process $(\xi_n)_{n\in\N}$, a $d$-dimensional martingale $(M_n)_{n\in\N_0}$ with $M_0=\zeta_0$ and a constant~$c\geq 0$ such that for all $n\in\N$
\begin{equation}\label{trip1} 
\begin{aligned}
\zeta_{n} & = \xi_{n}+\Delta M_n,\\
\|\xi_n\| &\leq \|\zeta_{n-1}\|\vee c,
\end{aligned}
\end{equation}
where $\Delta M_n = M_n-M_{n-1}$. Note that necessarily $\xi_n=\E[\zeta_n|\cF_{n-1}]$. 

\begin{theorem}\label{thm_BDG}
Assume that $(\zeta_n)_{n\in\N_0}$ is an adapted $d$-dimensional process, which satisfies ~(\ref{trip1}), and let $p\in [1,\infty)$. Then there exists a constant $\kappa\in (0,\infty)$, which only depends on  $p$, such that for every $n\in\N_0$, 
\[
\E\bigl[\max_{0\le k\le n}\|\zeta_k\|^p\bigr]\leq \kappa\, \bigl( \E\bigl[ [M]_n^{p/2}\bigr] +  c^p\bigr),
\]
where
\[
[M]_n=\sum_{k=1}^{n} \|\Delta M_k\|^2 +\|M_0\|^2.
\]
\end{theorem}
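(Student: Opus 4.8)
The plan is to reduce the assertion to the \emph{classical} Burkholder--Davis--Gundy inequality applied to an auxiliary scalar martingale, after peeling off the effect of the floor $c$ by a last-reset decomposition. Throughout write $A_n=\max_{0\le k\le n}\|\zeta_k\|$. First I would dispose of the trivial case $\E[[M]_n^{p/2}]=\infty$ and otherwise record the a priori integrability that the absorption step below will need: since $\|\Delta M_j\|^2\le [M]_n$ we have $\E[\|\Delta M_j\|^p]\le \E[[M]_n^{p/2}]<\infty$, so each $\Delta M_j\in L_p$; because $\|\zeta_j\|\le(\|\zeta_{j-1}\|\vee c)+\|\Delta M_j\|$ and $\zeta_0=M_0\in L_p$, a finite induction gives $A_n\in L_p$, i.e.\ $\E[A_n^p]<\infty$.

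The core is a pathwise estimate. Expanding $\|\zeta_j\|^2=\|\xi_j\|^2+2\langle\xi_j,\Delta M_j\rangle+\|\Delta M_j\|^2$ and noting that $\langle\xi_j,\Delta M_j\rangle$ is a martingale difference (as $\xi_j$ is previsible and $\E[\Delta M_j\mid\cF_{j-1}]=0$), I introduce the real martingale $L_m=\sum_{j=1}^m 2\langle\xi_j,\Delta M_j\rangle$ with $L_0=0$. For each $k$ let $\sigma_k$ be the largest index $j\in\{0,\dots,k\}$ with $j=0$ or $\|\zeta_{j-1}\|\le c$. By maximality, $\|\zeta_{j-1}\|>c$ for all $\sigma_k<j\le k$, whence $\|\xi_j\|\le\|\zeta_{j-1}\|\vee c=\|\zeta_{j-1}\|$ and $\|\xi_j\|^2-\|\zeta_{j-1}\|^2\le 0$ on that range; telescoping then yields $\|\zeta_k\|^2\le\|\zeta_{\sigma_k}\|^2+(L_k-L_{\sigma_k})+\sum_{j=\sigma_k+1}^k\|\Delta M_j\|^2$. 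At the reset index one has $\|\zeta_{\sigma_k}\|^2\le 2c^2+2[M]_n$ (either $\|\xi_{\sigma_k}\|\le c$, or $\sigma_k=0$ and $\|\zeta_0\|^2=\|M_0\|^2\le[M]_n$), while $\sum_{j}\|\Delta M_j\|^2\le[M]_n$ and $|L_k-L_{\sigma_k}|\le 2\max_{0\le m\le n}|L_m|$. Since the resulting bound is independent of $k$, I obtain the key pathwise inequality $A_n^2\le 2c^2+3[M]_n+2\max_{0\le m\le n}|L_m|$.

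It remains to control $\max_m|L_m|$ through $[M]_n$. Applying the classical BDG inequality to $L$ at exponent $p/2$ gives $\E[\max_{0\le m\le n}|L_m|^{p/2}]\le\kappa\,\E[[L]_n^{p/4}]$, and I bound $[L]_n\le 4\sum_{j=1}^n\|\xi_j\|^2\|\Delta M_j\|^2\le 4\,(A_n^2\vee c^2)\,[M]_n\le 4(A_n^2+c^2)[M]_n$, using $\|\xi_j\|\le\|\zeta_{j-1}\|\vee c\le A_n\vee c$. Raising the key inequality to the power $p/2$ and taking expectations yields $\E[A_n^p]\le C\bigl(c^p+\E[[M]_n^{p/2}]+\E[(A_n^{p/2}+c^{p/2})[M]_n^{p/4}]\bigr)$. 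Young's inequality splits the mixed terms: $A_n^{p/2}[M]_n^{p/4}\le\epsilon A_n^p+C_\epsilon[M]_n^{p/2}$ and $c^{p/2}[M]_n^{p/4}\le\tfrac12(c^p+[M]_n^{p/2})$.

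The main obstacle, and the one step that needs care, is the self-referential character of the estimate: the bound for $A_n$ contains $A_n$ itself through $[L]_n$. This is resolved by choosing $\epsilon$ small enough that the coefficient of $\E[A_n^p]$ on the right does not exceed $\tfrac12$, and then subtracting that term --- an operation legitimate \emph{exactly because} $\E[A_n^p]<\infty$ was secured at the outset. The remaining conceptual point is the last-reset decomposition via $\sigma_k$, which confines the influence of the floor $c$ to a single boundary term $\|\zeta_{\sigma_k}\|^2$ rather than letting it accumulate once per step; this is what turns the naive linear-in-$n$ bound into the desired $c^p$.
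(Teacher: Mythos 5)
Your proof takes a genuinely different route from the paper's, and for $p\ge 2$ it is correct and arguably more elementary: the a priori integrability, the last-reset decomposition via $\sigma_k$, the pathwise bound $A_n^2\le 2c^2+3[M]_n+2\max_{0\le m\le n}|L_m|$, and the final absorption step are all sound. However, there is a genuine gap for $p\in[1,2)$, which the statement covers: you apply ``the classical BDG inequality to $L$ at exponent $p/2$'', but for \emph{discrete-time} martingales the upper BDG inequality $\E[\max_{0\le m\le n}|L_m|^q]\le\kappa_q\,\E\bigl[[L]_n^{q/2}\bigr]$ with constants uniform in $n$ is \emph{false} for $q<1$. A counterexample: let $L$ have i.i.d.\ increments equal to $1-\delta$ with probability $\delta$ and to $-\delta$ with probability $1-\delta$, run for $n\approx\delta^{-1/2}$ steps. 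With probability close to one no large increment occurs, and on that event $\max_m|L_m|=n\delta\approx\delta^{1/2}$ while $[L]_n=n\delta^2\approx\delta^{3/2}$; hence $\E[\max_m|L_m|^q]\gtrsim\delta^{q/2}$ whereas $\E\bigl[[L]_n^{q/2}\bigr]\lesssim\delta^{3q/4}+\delta^{1/2}$, and the ratio diverges as $\delta\downarrow0$ whenever $q<1$. (Relatedly, for $p<2$ your $L_m=\sum_{j\le m}2\langle\xi_j,\Delta M_j\rangle$ need not even be integrable: $\xi_j,\Delta M_j\in L_p$ only gives $\langle\xi_j,\Delta M_j\rangle\in L_{p/2}$.) So the squaring trick caps your method at $p\ge2$, and the known substitutes for exponents below $1$ (Davis-type bounds involving the conditional square function plus a maximal-jump term) do not reduce to a bound by $\E\bigl[[M]_n^{p/2}\bigr]$ either.

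The paper's proof avoids this obstruction by never squaring: it first removes the floor $c$ via the shrinking $\tilde\xi_n=(1-c/\|\xi_n\|)_+\,\xi_n$, then runs an induction over the time $t$ at which the recursion is replaced by a pure martingale, comparing $(\zeta_n)$ with processes $\zeta_t+U(M_n-M_t)$ for $\cF_t$-measurable random orthonormal transformations $U$ and writing $\xi_{t+1}$ as a convex combination $\alpha U^*\zeta_t+(1-\alpha)(-U)^*\zeta_t$ --- possible precisely because $\|\xi_{t+1}\|\le\|\zeta_t\|$ --- so that the classical BDG inequality is invoked only at exponent $p\ge1$ and applied to $M$ itself. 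That is what secures the full range $p\in[1,\infty)$. Note that within this paper the theorem is only ever used with $p\ge2$ (Section~\ref{sec2} fixes $p\in[2,\infty)$), so your argument would suffice for the application; but as a proof of the theorem as formulated, the range $p\in[1,2)$ remains open.
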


\begin{proof} Fix $p\in [1,\infty)$. 

We first consider the case where $c=0$.  
Recall that by the BDG inequality there exists a constant $\bar \kappa>0$ depending only on  $p$ such that for every $d$-dimensional martingale $(M_n)_{n\in\N_0}$,
\[
\E\bigl[\max_{0\le k\le n}\|M_k\|^p\bigr]\leq \bar \kappa\, \E\bigl[ [M]_n^{p/2}\bigr].
\]
We fix a time horizon $T\in\N_0$ and prove the statement of the theorem with $\kappa = \bar \kappa$ by induction: we say that the statement holds up to time $t\in\{0,\dots,T\}$, if for every  $d$-dimensional adapted process $(\zeta_n)_{n\in\N_0}$, for every $d$-dimensional previsible process   $(\xi_n)_{n\in\N}$ and for every  $d$-dimensional martingale $(M_n)_{n\in\N_0}$  with  
\begin{align}
\begin{cases}
\zeta_0=M_0,\\
\|\xi_{n}\|  \le \|\zeta_{n-1}\|, &\text{ if } 1\le n\leq t,\\
\zeta_{n}  = \xi_{n}+\Delta M_n, &\text{ if }1\le n\leq t,\\
\zeta_{n}=\zeta_{n-1}+ \Delta M_n, & \text{ if }n> t, \tag{$C_t$}
\end{cases}
\end{align}
one has
\[
\E\bigl[\max_{0\le n\le T}\|\zeta_n\|^p\bigr]\leq \bar \kappa\, \E\bigl[ [M]_T^{p/2}\bigr].
\]
Clearly, the statement is satisfied up to time $0$ as a consequence of the BDG inequality. Next, suppose that the statement is satisfied up to time $t\in\{0,\dots,T-1\}$. Let  $(\zeta_n)_{n\in\N_0}$ be a $d$-dimensional adapted  process, $(\xi_n)_{n\in\N}$ be a  $d$-dimensional previsible process and  $(M_n)_{n\in\N_0}$ be a $d$-dimensional martingale satisfying property ($C_{t+1}$).
Consider any $\cF_{t}$-measurable random orthonormal transformation $U$ on $(\R^d,\|\cdot\|)$ and put 
\[
\zeta^U_n=\begin{cases} \zeta_n,&\text{ if }n\leq t,\\
\zeta_{t}+U(M_{n}-M_{t}), &\text{ if }n>t\end{cases}
\]
as well as
\[
M^U_n=\begin{cases} M_n,&\text{ if }n\leq t,\\
M_{t}+ U(M_{n}-M_{t}), &\text{ if }n>t.\end{cases}
\]
Then it is easy to check that $(M^U_n)_{n\in\N_0}$ is a martingale with $[M^U]_n = [M]_n$ for all $n\in\N$. Furthermore, $(\zeta^U_n)_{n\in\N_0}$ is adapted  and the triple $(\zeta^U,\xi, M^U)$ satisfies property ($C_t$). Hence, by the induction hypothesis,
\begin{equation}\label{a1}
\E\bigl[\max_{0\le n\le T}\|\zeta^U_n\|^p\bigr] \leq \bar \kappa\, \E\bigl[ [M^U]_T^{p/2}\bigr] =\bar  \kappa\, \E\bigl[ [M]_T^{p/2}\bigr].
\end{equation}
Note that for any such random orthonormal transformation $U$, the norm of the random variable $\zeta_n^U$ is the same as the norm of the variable $\bar\zeta_n^U$ given by
\[
\bar \zeta^U_n=\begin{cases} \zeta_n,&\text{ if }n\leq t,\\
U^* \zeta_{t}+ M_{n}-M_{t}, &\text{ if }n>t,\end{cases}
\]
whence
\begin{equation}\label{a2}
\E\bigl[\max_{0\le n\le T}\|\bar\zeta^U_n\|^p\bigr] = \E\bigl[\max_{0\le n\le T}\|\zeta^U_n\|^p\bigr].
\end{equation}
Clearly, we can choose an $\cF_{t}$-measurable random orthonormal transformation $U$ on $(\R^d,\|\cdot\|)$ such that 
\[
U^*  \zeta_t = \frac{\|\zeta_t\|}{\|\xi_{t+1}\|} \xi_{t+1} 
\]
holds on $\{\xi_{t+1}\neq 0\}$. Let
\[
\alpha = \frac{\|\xi_{t+1}\|+\|\zeta_t\|}{2\|\zeta_t\|}\cdot 1_{\{\zeta_t\neq 0\}}.
\]
Then $\alpha$ is $\cF_{t}$-measurable and takes values in $[0,1]$ since $\|\xi_{t+1}\|\le \|\zeta_t\|$. Moreover, we have $\xi_{t+1} = \alpha U^* \zeta_t + (1-\alpha)  (-U)^* \zeta_t$ so that by property ($C_{t+1}$) of the triple $(\zeta,\xi,M)$,
\[
\zeta_n= \xi_{t+1} + M_n-M_t = \alpha \bar \zeta^U_n+(1-\alpha) \bar\zeta_n^{-U}
\]
for $n= t+1,\dots,T$. Note that $\zeta_n=\zeta^U_n=\zeta_n^{-U}$ for $n=0,\dots,t$. By  convexity of $\|\cdot\|^p$ we thus obtain
\begin{align*}
\max_{0\le n\le T}\|\bar\zeta^U_n\|^p & = \max_{0\le n\le T}\|\alpha \bar\zeta^U_n+  (1-\alpha) \bar\zeta^{-U}_n\|^p
 \le \alpha \max_{0\le n\le T}\|\bar\zeta^U_n\|^p +  (1-\alpha)\max_{0\le n\le T}\| \bar\zeta^{-U}_n\|^p.
\end{align*}
Hence
\begin{align*}
\E\bigl[\max_{0\le n\le T}\|\zeta_{n}\|^p|\cF_{t}\bigr] & \le \alpha \E\bigl[\max_{0\le n\le T}\|\bar\zeta^U_n\|^p|\cF_{t}\bigr] +  (1-\alpha)\E\bigl[\max_{0\le n\le T}\| \bar\zeta^{-U}_n\|^p|\cF_{t}\bigr]  \le \E\bigl[\max_{0\le n\le T}\|\bar\zeta^{U'}_n\|^p|\cF_{t}\bigr],
\end{align*}
where $U'$ is the $\cF_{t}$-measurable random orthonormal transformation given by
\[
U'(\omega)= \begin{cases} U(\omega)& \text{ if }\omega\in\bigl\{ \E[\max_{0\le n\le T}\|\bar\zeta^U_n\|^p|\cF_{t}]\geq \E[\max_{0\le n\le T}\|\bar\zeta^{- U}_n\|^p|\cF_{t}]\bigr\},\\
-U(\omega) & \text{ otherwise}.
\end{cases}
\] 
Applying \eqref{a1} and \eqref{a2} with $U=U'$
finishes the induction step.\smallskip

Next, we consider the case of $c > 0$. Suppose that $\zeta,\xi$ and $M$  are as stated in the theorem. For $n\in\N$ we put
\[
\tilde \xi_n = (1-c/\|\xi_n\|)_+ \cdot \xi_n
\]
and 
\[
\tilde \zeta_n = \tilde \xi_n + \Delta  M_n.
\]
Furthermore, let $\tilde \zeta_0=\zeta_0=M_0$. We will show that the triple $(\tilde \zeta,\tilde \xi,M)$ satisfies~(\ref{trip1}) with $c=0$.
Clearly, $(\tilde\zeta_n)_{n\in\N_0}$ is adapted and $(\tilde \xi_n)_{n\in\N}$ is previsible. Moreover, one has  for $n\in\N$  on $\{\|\xi_n\|\geq c\}$ that
\begin{align*}
\|\tilde \xi_n\| & = \|\xi_n\|-c\le \|\zeta_{n-1}\|-c=\|\tilde \zeta_{n-1}+\xi_{n-1}-\tilde \xi_{n-1}\|-c\\
&\leq \|\tilde \zeta_{n-1}\|+\|\xi_{n-1}-\tilde \xi_{n-1}\|-c  =  \|\tilde \zeta_{n-1}\|
\end{align*}
and on $\{\|\xi_n\|< c\}$ that $\|\tilde \xi_n\|=0\leq \|\tilde \zeta_{n-1}\|$.
We may thus apply Theorem \ref{thm_BDG} with $c=0$ to obtain that  for every $n\in\N$,
\[
\E\bigl[\max_{0\le k \le n}\|\tilde \zeta_n\|^p\bigr]\leq \bar \kappa\,  \E\bigl[ [ M]_n^{p/2}\bigr].
\]
Since for every $n\in\N$,
\[
\|\zeta_n\|^p = \|\tilde \zeta_n + \xi_n-\tilde \xi_n\|^p \le 2^p(\|\tilde \zeta_n\|^p + c^p),
\]
we conclude  that
\[
\E\bigl[\max_{0\le k \le n}\|\zeta_n\|^p\bigr]   \le  2^p\bigl(\bar \kappa\,  \E\bigl[ [ M]_n^{p/2}\bigr] + c^p\bigr) \le
2^p(\bar \kappa \vee 1) \cdot \bigl( \E\bigl[ [ M]_n^{p/2}\bigr] + c^p\bigr),
\]
which completes the proof.
\end{proof}

%\section*{Acknowledgement}

\bibliographystyle{plain}
\bibliography{stoch_approx}

\begin{thebibliography}{10}

\bibitem{BMP90}
A.~Benveniste, M.~M{\'e}tivier, and P.~Priouret.
\newblock {\em Adaptive algorithms and stochastic approximations}, volume~22 of
  {\em Applications of Mathematics (New York)}.
\newblock Springer-Verlag, Berlin, 1990.

\bibitem{DiRe97}
J.~Dippon and J.~Renz.
\newblock Weighted means in stochastic approximation of minima.
\newblock {\em SIAM J. Control Optim.}, 35(5):1811--1827, 1997.

\bibitem{DMP08}
K.~Djeddour, A.~Mokkadem, and M.~Pelletier.
\newblock On the recursive estimation of the location and of the size of the
  mode of a probability density.
\newblock {\em Serdica Math. J.}, 34(3):651--688, 2008.

\bibitem{Duf96}
M.~Duflo.
\newblock {\em Algorithmes stochastiques}, volume~23 of {\em Math\'ematiques \&
  Applications (Berlin) [Mathematics \& Applications]}.
\newblock Springer-Verlag, Berlin, 1996.

\bibitem{Fri15}
N.~Frikha.
\newblock Multi-level stochastic approximation algorithms.
\newblock {\em Ann. Appl. Probab.}, 26:933--985, 2016.

\bibitem{GaKr74}
V.~F. Gaposhkin and T.~P. Krasulina.
\newblock On the law of the iterated logarithm in stochastic approximation
  processes.
\newblock {\em Theory Prob. Appl.}, 19(4):844--850, 1974.

\bibitem{Gil08}
M.~B. Giles.
\newblock Multilevel {M}onte {C}arlo path simulation.
\newblock {\em Oper. Res.}, 56(3):607--617, 2008.

\bibitem{Hei01}
S.~Heinrich.
\newblock Multilevel {M}onte {C}arlo methods.
\newblock In {\em Large-scale scientific computing}, pages 58--67. Springer
  Berlin Heidelberg, 2001.

\bibitem{KoTs04}
V.~R. Konda and J.~N. Tsitsiklis.
\newblock Convergence rate of linear two-time-scale stochastic approximation.
\newblock {\em Ann. Appl. Probab.}, 14(2):796--819, 2004.

\bibitem{KuYa93}
H.~J. Kushner and J.~Yang.
\newblock Stochastic approximation with averaging of the iterates: optimal
  asymptotic rate of convergence for general processes.
\newblock {\em SIAM J. Control Optim.}, 31(4):1045--1062, 1993.

\bibitem{KY03}
H.~J. Kushner and G.~G. Yin.
\newblock {\em Stochastic approximation and recursive algorithms and
  applications}, volume~35 of {\em Applications of Mathematics (New York)}.
\newblock Springer-Verlag, New York, second edition, 2003.
\newblock Stochastic Modelling and Applied Probability.

\bibitem{Lai03}
T.~L. Lai.
\newblock Stochastic approximation.
\newblock {\em Ann. Statist.}, 31(2):391--406, 2003.
\newblock Dedicated to the memory of Herbert E. Robbins.

\bibitem{LaiRob78}
T.~L. Lai and H.~Robbins.
\newblock Limit theorems for weighted sums and stochastic approximation
  processes.
\newblock {\em Proc. Nat. Acad. Sci. U.S.A.}, 75, 1978.

\bibitem{LBN94}
A.~Le~Breton and A.~Novikov.
\newblock Averaging for estimating covariances in stochastic approximation.
\newblock {\em Math. Methods Statist.}, 3(3):244--266, 1994.

\bibitem{LBN95}
A.~Le~Breton and A.~Novikov.
\newblock Some results about averaging in stochastic approximation.
\newblock {\em Metrika}, 42(3-4):153--171, 1995.
\newblock Second International Conference on Mathematical Statistics (Smolenice
  Castle, 1994).

\bibitem{LPW92}
L.~Ljung, G.~Pflug, and H.~Walk.
\newblock {\em Stochastic approximation and optimization of random systems},
  volume~17 of {\em DMV Seminar}.
\newblock Birkh\"auser Verlag, Basel, 1992.

\bibitem{MP11}
A.~Mokkadem and M.~Pelletier.
\newblock A generalization of the averaging procedure: the use of
  two-time-scale algorithms.
\newblock {\em SIAM J. Control Optim.}, 49(4):1523--1543, 2011.

\bibitem{Pel98b}
M.~Pelletier.
\newblock On the almost sure asymptotic behaviour of stochastic algorithms.
\newblock {\em Stochastic Process. Appl.}, 78(2):217--244, 1998.

\bibitem{Pel98a}
M.~Pelletier.
\newblock Weak convergence rates for stochastic approximation with application
  to multiple targets and simulated annealing.
\newblock {\em Ann. Appl. Probab.}, 8(1):10--44, 1998.

\bibitem{Pol90}
B.~T. Polyak.
\newblock A new method of stochastic approximation type.
\newblock {\em Avtomat. i Telemekh.}, 51(7):937--1008, 1998.

\bibitem{RM51}
H.~Robbins and S.~Monro.
\newblock A stochastic approximation method.
\newblock {\em Ann. Math. Statistics}, 22:400--407, 1951.

\bibitem{Rup82}
D.~Ruppert.
\newblock Almost sure approximations to the {R}obbins-{M}onro and
  {K}iefer-{W}olfowitz processes with dependent noise.
\newblock {\em Ann. Probab.}, 10, 1982.

\bibitem{Rup91}
D.~Ruppert.
\newblock Stochastic approximation.
\newblock In {\em Handbook of sequential analysis}, volume 118 of {\em Statist.
  Textbooks Monogr.}, pages 503--529. Dekker, New York, 1991.

\end{thebibliography}

\end{document}